\newcommand{\bitem}{\begin{itemize}}
\newcommand{\eitem}{\end{itemize}}
\newcommand{\beq}{\begin{equation}}
\newcommand{\eeq}{\end{equation}}
\newcommand{\cC}{{\cal C}}
\newcommand{\cS}{{\cal S}}
\newcommand{\cR}{{\cal R}}
\newcommand{\cL}{{\cal L}}
\newcommand{\bZ}{{\mathbb Z}}
\newcommand{\bR}{{\mathbb R}}
\newcommand{\bC}{{\mathbb C}}
\newcommand{\bN}{{\mathbb N}}
\newcommand{\ox}{\omega_1}
\newcommand{\oy}{\omega_2}
\newcommand{\CC}{\mathbb{C}}
\newcommand{\Sp}{\mbox{supp }}
\newcommand{\Id}{\mbox{Id}}
  \newcommand{\R}{\mathbb{R}}
 \newcommand{\C}{\mathbb{C}}
 \newcommand{\Z}{\mathbb{Z}}
\newcommand{\ip}[2]{\left\langle#1,#2\right\rangle}
\newtheorem{measure}{Measure} 
\begin{document}

\title*{Digital Shearlet Transforms}
\author{Gitta Kutyniok, Wang-Q Lim, and Xiaosheng Zhuang}

%
%
\maketitle
\abstract{Over the past years, various representation systems which sparsely approximate functions governed by
anisotropic features such as edges in images have been proposed. We exemplarily mention the systems of contourlets,
curvelets, and shearlets. Alongside the theoretical development of these systems, algorithmic realizations of the
associated transforms were provided. However, one of the most common shortcomings of these frameworks is the lack
of providing a unified treatment of the continuum and digital world, i.e., allowing a digital theory to be a
natural digitization of the continuum theory. In fact, shearlet systems are the only systems so far which satisfy
this property, yet still deliver optimally sparse approximations of cartoon-like images. In this chapter, we
provide an introduction to digital shearlet theory with a particular focus on a unified treatment of the
continuum and digital realm. In our survey we will present the implementations of two shearlet transforms, one
based on band-limited shearlets and the other based on compactly supported shearlets. We will moreover
discuss various quantitative measures, which allow an objective comparison with other directional transforms and
an objective tuning of parameters. The codes for both presented transforms as well as the framework for quantifying
performance are provided in the Matlab toolbox \url{ShearLab}.
}

\section{Introduction}

One key property of wavelets, which enabled their success as a universal methodology for signal processing, is the
unified treatment of the continuum and digital world. In fact, the wavelet transform can be implemented by a natural
digitization of the continuum theory, thus providing a theoretical foundation for the digital transform. Lately,
it was observed that wavelets are however suboptimal when sparse approximations of 2D functions are seeked. The reason
is that these functions are typically governed by anisotropic features such as edges in images or evolving shock
fronts in solutions of transport equations. However, Besov models -- which wavelets optimally encode -- are clearly
deficient to capture these features.
Within the model of cartoon-like images, introduced by Donoho in \cite{Don99}
in 1999, the suboptimal behavior of wavelets for such 2D functions was made mathematically precise; see also
Chapter \cite{SparseApproximation}.

Among the various directional representation systems which have since then been proposed such as contourlets \cite{DV05},
curvelets \cite{CD04}, and shearlets, the shearlet system is in fact the only one which delivers optimally sparse
approximations of cartoon-like images and still also allows for a unified treatment of the continuum and digital world.
One main reason in comparison to the other two mentioned systems is the fact that shearlets are affine systems,
thereby enabling an extensive theoretical framework, but parameterize directions by slope (in contrast to angles)
which greatly supports treating the digital setting. As a thought experiment just note that a shear matrix leaves
the digital grid $\Z^2$ invariant, which is in general not true for rotation.

This raises the following questions, which we will answer in this chapter:
\setitemindent{1000}
\begin{enumerate}
\item[(P1)] What are the main desiderata for a digital shearlet theory?
\item[(P2)] Which approaches do exist to derive a natural digitization of the continuum shearlet theory?
\item[(P3)] How can we measure the accuracy to which the desiderata from (P1) are matched?
\item[(P4)] Can we even introduce a framework within which different directional transforms can be objectively
compared?
\end{enumerate}
\setitemindent{00}

Before delving into a detailed discussion, let us first contemplate about these questions on a more intuitive level.

\subsection{A Unified Framework for the Continuum and Digital World}

Several desiderata come to one's mind, which guarantee a unified framework for both the continuum and digital world,
and provide an answer to (P1). The following are the choices of desiderata which were considered in \cite{KSZ11,DKSZ11}:

\begin{itemize}
\item {\em Parseval Frame Property.} The transform shall ideally have the tight frame property, which enables taking the
adjoint as inverse transform. This property can be broken into the following two parts, which most, but not
all, transforms admit:\\[-0.6cm]
\bitem
\item[$\diamond$] {\em Algebraic Exactness.} The transform should be based on a theory for digital data in the sense that the
analyzing functions should be an exact digitization of the continuum domain analyzing elements.
\item[$\diamond$] {\em Isometry of Pseudo-Polar Fourier Transform.} If the image is first mapped into a different domain -- here
the pseudo-polar domain --, then this map should be an isometry.\\[-0.6cm]
\eitem
\item {\em Space-Frequency-Localization.} The analyzing elements of the associated transform should ideally be
highly localized in space and frequency -- to the extent to which uncertainty principles allow this.
\item {\em True Shear Invariance.} Shearing naturally occurs in digital imaging, and it can -- in contrast to rotation --
be precisely realized in the digital domain. Thus the transform should be shear invariant, i.e., a shearing of the input
image should be mirrored in a simple shift of the transform coefficients.
\item {\em Speed.} The transform should admit an al\-go\-ri\-thm of or\-der $O(N^2 \log N)$ flops, where $N^2$ is the number
of digital points of the input image.
\item {\em Geometric Exactness.} The transform should preserve geometric properties parallel to those of the continuum theory,
for example, edges should be mapped to edges in transform domain.
\item {\em Robustness.} The transform should be resilient against impacts such as (hard) thresholding.
\end{itemize}

\subsection{Band-Limited Versus Compactly Supported Shearlet Transforms}

In general, two different types of shearlet systems are utilized today: Band-limited shearlet systems and compactly supported
shearlet systems (see also Chapters \cite{Introduction} and \cite{SparseApproximation}). Regarding those from an algorithmic
viewpoint, both have their particular advantages and disadvantages:

Algorithmic realizations of the {\em band-limited shearlet transform} have on the one hand typically a
higher computational complexity due to the fact that the windowing takes place in frequency domain. However, on the other hand,
they do allow a high localization in frequency domain which is important, for instance, for handling seismic data. Even more, band-limited
shearlets do admit a precise digitization of the continuum theory.

In contrast to this, algorithmic realizations of the {\em compactly
supported shearlet transform} are much faster and have the advantage of achieving a high accuracy in spatial domain. But for
a precise digitization one has to lower one's sights slightly. A more comprehensive answer to (P2) will be provided in the sequel
of this chapter, where we will present the digital transform based on band-limited shearlets introduced in \cite{KSZ11} and the
digital transform based on compactly supported shearlets from \cite{Lim2010}.

\subsection{Related Work}

Since the introduction of directional representation systems by many pioneer researchers (\cite{CD99,CD04,CD05a,CD05b,DV05}),
various numerical implementations of their directional representation systems have been proposed. Let us next briefly survey
the main features of the two closest to shearlets, which are the contourlet and curvelet algorithms.

\bitem
\item {\em Curvelets} \cite{CDDY06}. The discrete curvelet transform is implemented in the software package {\em CurveLab}, which
comprises two different approaches. One is based on unequispaced FFTs, which are used to interpolate the function in the frequency
domain on different tiles with respect to different orientations of curvelets. The other is based on frequency wrapping, which wraps
each subband indexed by scale and angle into a fixed rectangle around the origin. Both approaches can be realized efficiently in
$O(N^2\log N)$ flops with $N$ being the image size. The disadvantage of this approach is the lack of an associated continuum domain
theory.
\item {\em Contourlets} \cite{DV05}. The implementation of contourlets is based on a directional filter bank, which produces a
directional frequency partitioning similar to the one generated by curvelets. The main advantage of this approach is that it allows
a tree-structured filter bank implementation, in which aliasing due to subsampling is allowed to exist. Consequently, one can
achieve great efficiency in terms of redundancy and good spatial localization. A drawback of this approach is that various
artifacts are introduced and that an associated continuum domain theory is missing.
\eitem
Summarizing, all the above implementations of directional representation systems have their own advantages and disadvantages; one
of the most common shortcomings is the lack of providing a unified treatment of the continuum and digital world.

Besides the shearlet implementations we will present in this chapter, we would like to refer to Chapter
\cite{Applications} for a discussion of the algorithm in \cite{ELL08} based on the Laplacian pyramid scheme and directional
filtering. It should be though noted that this implementation is not focussed on a natural digitization of the continuum theory
and that the code was not made publicly available, both of which are crucial aspects of the work presented in the sequel. We further
would like to draw the reader's attention to Chapter \cite{ShearletMRA} which is based on \cite{KS08} aiming at introducing
a shearlet MRA from a subdivision perspective. Finally, we should mention that a different approach to a shearlet MRA was
recently undertaken in \cite{HKZ10}.

\subsection{Framework for Quantifying Performance}

A major problem with many computation-based results in applied mathematics is the non-availability of an accompanying code,
and the lack of a fair and objective comparison with other approaches. The first problem can be overcome by following the
philosophy of `reproducible research' \cite{DMSSU08} and making the code publicly available with sufficient documentation.
In this spirit, the shearlet transforms presented in this chapter are all downloadable from \url{http://www.shearlab.org}.
One approach to overcome the second obstacle is the provision of a carefully selected set of prescribed performance measures
aiming to prohibit a biased comparison on isolated tasks such as denoising and compression of specific standard images like
`Lena', `Barbara', etc. It seems far better from an intellectual viewpoint to carefully decompose performance according to
a more insightful array of tests, each one motivated by a particular well-understood property we are trying to obtain.
In this chapter we will present such a framework for quantifying performance specifically of implementations
of directional transforms, which was originally introduced in \cite{KSZ11,DKSZ11}. We would like to emphasize that such a
framework does not only provide the possibility of a fair and thorough comparison, but also enables the tuning of the parameters
of an algorithm in a rational way, thereby providing an answer to both (P3) and (P4).

\subsection{ShearLab}

Following the philosophy of the previously detailed thoughts, \url{ShearLab}\footnote{ShearLab (Version 1.1) is available from
\url{http://www.shearlab.org}.} was introduced by Donoho, Shahram, and the authors. This software package contains
\bitem
\item An algorithm based on band-limited shearlets introduced in \cite{KSZ11}.
\item An algorithm based on compactly supported shearlets introduced in \cite{Lim2010}.
\item A comprehensive framework for quantifying performance of directional representations in general.
\eitem
This chapter is also devoted to provide an introduction to and discuss the mathematical foundation of these components.

\subsection{Outline}

In Section~\ref{sec:dsh-ppft}, we introduce and analyze the fast digital shearlet transform FDST, which is based on
band-limited shearlets. Section \ref{sec:dst} is then devoted to the presentation and discussion of the digital separable
shearlet transform DSST and the digital non-separable shearlet transform DNST. The framework of performance measures for
parabolic scaling based transforms is provided in Section \ref{sec:framework}. In the same section, we further discuss these measures
 for the special cases of the three previously introduced transforms.

\section{Digital Shearlet Transform using Band-Limited Shearlets}
\label{sec:dsh-ppft}

The first algorithmic realization of a digital shearlet transform we will present, coined {\em Fast
Digital Shearlet Transform (FDST)}, is based on band-limited shearlets.
Let us start by defining the class of shearlet systems we are interested in. Referring to Chapter \cite{Introduction},
we will consider the cone-adapted discrete shearlet system $SH(\phi,\psi,\tilde{\psi};\Delta,\Lambda,\tilde{\Lambda}) =
\Phi(\phi;\Delta) \cup \Psi(\psi;\Lambda) \cup \tilde{\Psi}(\tilde{\psi};\tilde{\Lambda})$ with $\Delta = \Z^2$ and
\[
\Lambda = \tilde{\Lambda} = \{(j,k,m) : j \ge 0,  |k| \leq 2^j, m \in \Z^2\}.
\]
We wish to emphasize that this choice relates to a scaling by $4^j$ yielding an integer valued parabolic scaling matrix,
which is better adapted to the digital setting than a scaling by $2^j$. We further let $\psi$ be a classical shearlet ($\tilde{\psi}$
likewise with $\tilde{\psi}(\xi_1,\xi_2) = \psi(\xi_2,\xi_1)$), i.e.,
\beq \label{eq:psidef}
\hat{\psi}(\xi) = \hat{\psi}(\xi_1,\xi_2) = \hat{\psi}_1(\xi_1) \, \hat{\psi}_2(\tfrac{\xi_2}{\xi_1}),
\eeq
where $\psi_1 \in L^2(\bR)$ is a wavelet with $\hat{\psi}_1 \in C^\infty(\mathbb{R})$ and supp $\hat{\psi}_1 \subseteq [-4,-\frac14]
\cup [\frac14,4]$, and $\psi_2 \in L^2(\mathbb{R})$ a `bump' function satisfying $\hat{\psi}_2 \in C^\infty(\mathbb{R})$ and
supp $\hat{\psi}_2 \subseteq [-1,1]$. We remark that the chosen support deviates slightly from the choice in the introduction,
which is however just a minor adaption again to prepare for the digitization. Further, recall the definition of the cones
$\cC_{11}$ -- $\cC_{22}$ from Chapter \cite{Introduction}.

The digitization of the associated discrete shearlet transform will be performed in the frequency domain. Focussing, on the cone
$\cC_{21}$, say, the discrete shearlet transform is of the form
\begin{equation}\label{def:csht}
f \mapsto \langle f,\psi_\eta\rangle =\langle\hat{f},\hat{\psi}_\eta\rangle
=\Big\langle \hat{f},2^{-j\tfrac32}\hat\psi(S_k^TA_{4^{-j}}\cdot)e^{2\pi i \ip{A_{4^{-j}}S_km}{\cdot}} \Big\rangle,
\end{equation}
where $\eta=(j,k,m,\iota)$ indexes {\em scale} $j$, {\em orientation} $k$, {\em position} $m$, and {\em cone} $\iota$. Considering
this shearlet transform for continuum domain data (taking all cones into account) implicitly induces a trapezoidal tiling of
frequency space which is evidently not cartesian. A digital grid perfectly adapted to this situation is the so-called `pseudo-polar
grid', which we will introduce and discuss subsequently in detail. Let us for now mention that this viewpoint enables representation of the
discrete shearlet transform as a cascade of three steps:

\bitem
\item[1)] Classical Fourier transformation  and change of variables to pseudo-polar coordinates.
\item[2)] Weighting by a radial `density com\-pen\-sa\-tion' factor.
\item[3)] Decomposition into rectangular tiles and inverse Fourier transform of each tiles.
\eitem

Before discussing these steps in detail, let us give an overview of how these steps will be faithfully digitized. First, it will be shown in
Subsection \ref{subsec:dsh-ppft}, that the two operations in Step 1) can be combined to the so-called pseudo-polar Fourier transform.
An oversampling in radial direction of the pseudo-polar grid, on which the pseudo-polar Fourier transform is computed, will then
enable the design of `density-compensation-style' weights on those grid points leading to Steps 1) \& 2) being an isometry. This will
be discussed in Subsection \ref{subsec:weights}. Subsection \ref{subsec:DSHOnPPGrid} is then concerned with the digitization of the
discrete shearlets to subband windows. Notice that a digital analog of \eqref{def:csht} moreover requires an additional 2D-iFFT.
Thus, concluding the digitization of the discrete shearlet transform will cascade the following steps, which is the
exact analogy of the continuum domain shearlet transform \eqref{def:csht}:
\setitemindent{0000}
\begin{enumerate}
\item[(S1)] PPFT: Pseudo-polar Fourier transform with oversampling factor in the radial direction.
\item[(S2)] Weighting: Multiplication by `density-compensation-style' weights.
\item[(S3)] Windowing: Decomposing the pseudo-polar grid into rectangular subband windows with additional 2D-iFFT.
\end{enumerate}
\setitemindent{00}
With a careful choice of the weights and subband windows, this transform is an isometry. Then the inverse transform can be computed
by merely taking the adjoint in each step. A final discussion on the FDST will be presented in Subsection \ref{subsec:FDST}.


\subsection{Pseudo-Polar Fourier Transform}
\label{subsec:dsh-ppft}

We start by discussing Step (S1).

\subsubsection{Pseudo-Polar Grids with Oversampling}
\label{subsubsec:grid}

In \cite{ACDIS08}, a fast pseudo-polar Fourier transform (PPFT) which evaluates the discrete
Fourier transform at points on a trapezoidal grid in frequency space, the so-called pseudo-polar grid, was already developed. However, the
direct use of the PPFT is problematic, since it is -- as defined in \cite{ACDIS08} -- not an isometry. The main obstacle is the
highly nonuniform arrangement of the points on the pseudo-polar grid. This intuitively suggests to downweight points in regions
of very high density by using weights which correspond roughly to the density compensation weights underlying the continuous
change of variables. This will be enabled by a sufficient radial oversampling of the pseudo-polar grid.

This new pseudo-polar grid, which we will denote in the sequel by $\Omega_R$ to indicate the oversampling rate $R$, is defined by
\beq \label{eq:OmegaR}
\Omega_R = \Omega_R^1 \cup \Omega_R^2,
\eeq
where
\begin{eqnarray} \label{eq:OmegaR1}
\Omega_R^1 & = &  \{(-\tfrac{2n}{R}\cdot\tfrac{2\ell}{N},\tfrac{2n}{R}) : -\tfrac{N}{2} \le \ell \le \tfrac{N}{2}, \, -\tfrac{RN}{2} \le n \le \tfrac{RN}{2}\},\\
\label{eq:OmegaR2}
\Omega_R^2 & = &  \{(\tfrac{2n}{R},-\tfrac{2n}{R}\cdot\tfrac{2\ell}{N}) : -\tfrac{N}{2} \le \ell \le \tfrac{N}{2}, \, -\tfrac{RN}{2} \le n \le \tfrac{RN}{2}\}.
\end{eqnarray}
This grid is illustrated in Fig.~\ref{fig:PPgridR}.
\begin{figure}[h]
\begin{center}
\includegraphics[height=1.2in]{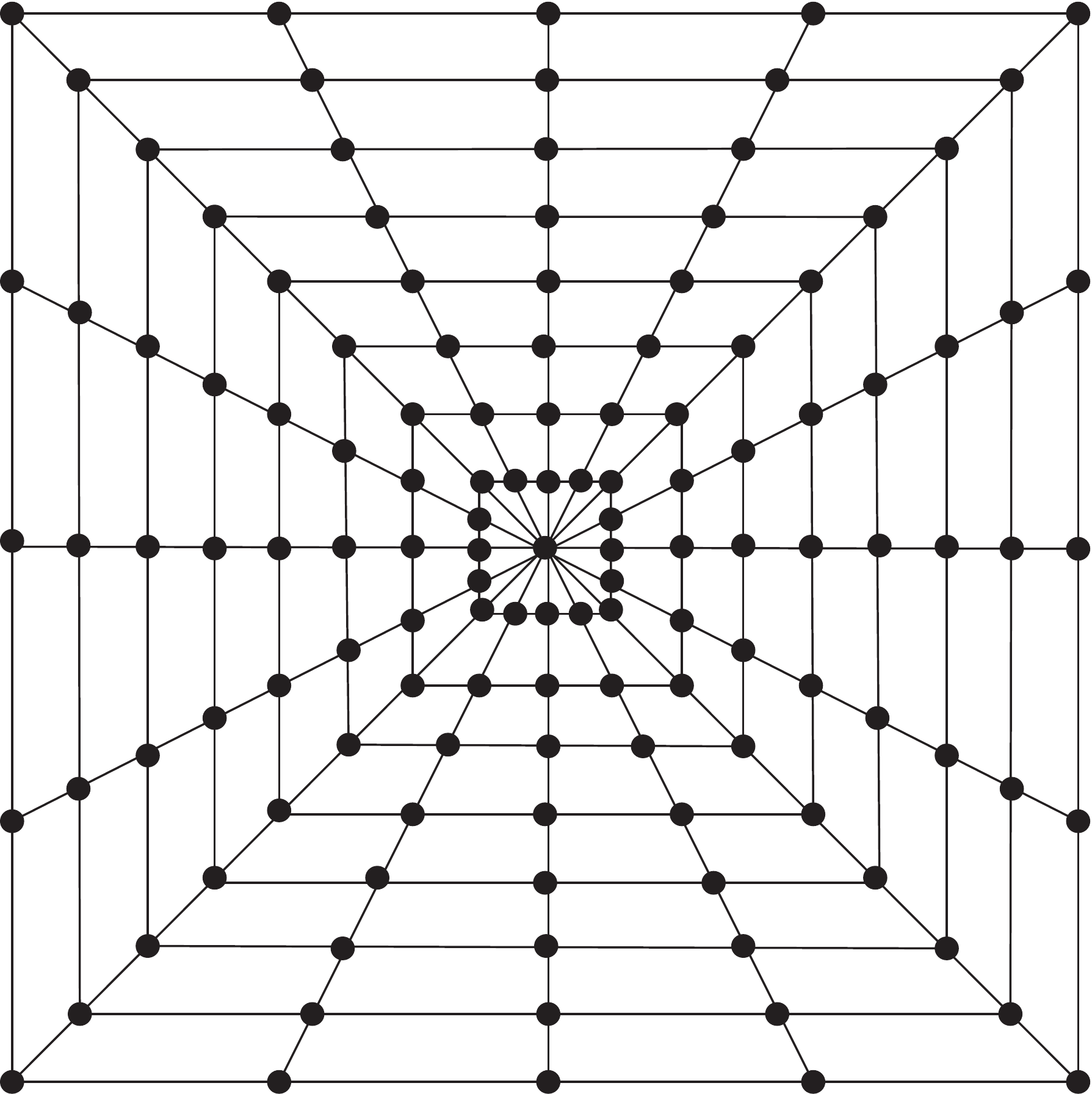}
\put(-48,-17){$\Omega_R$}
\hspace*{1cm}
\includegraphics[height=1.2in]{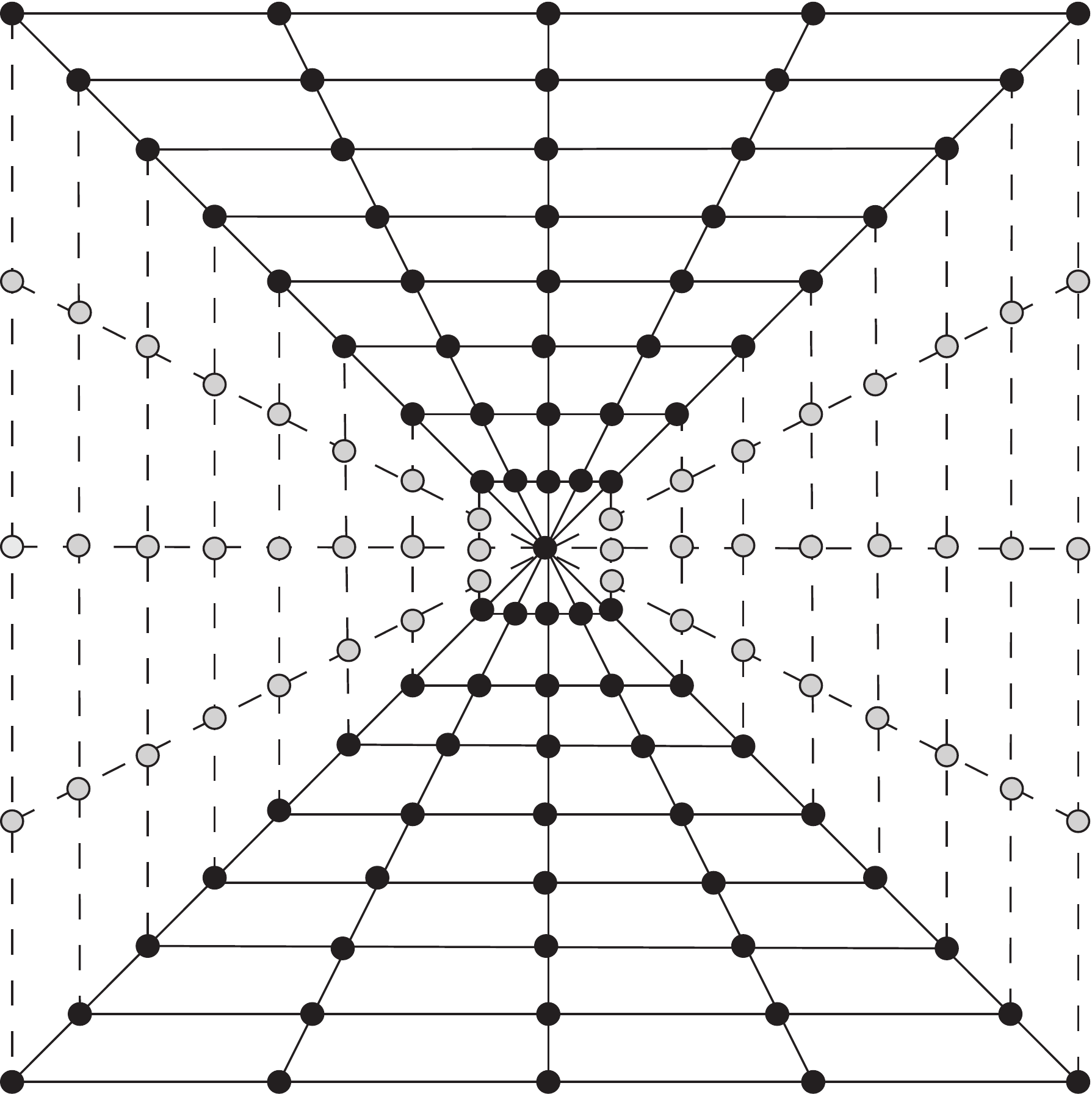}
\put(-48,-17){$\Omega^1_R$}
\hspace*{1cm}
\includegraphics[height=1.2in]{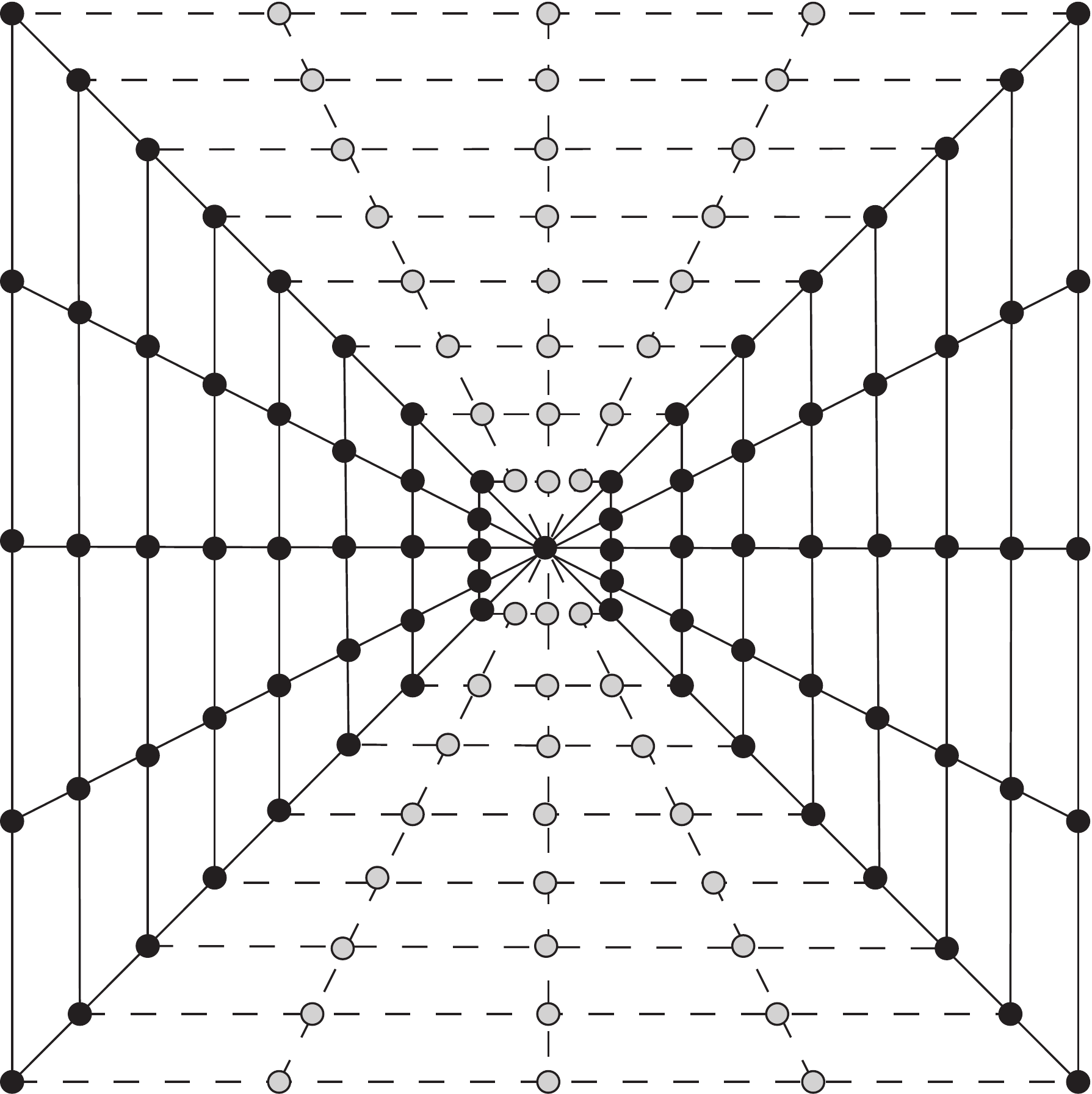}
\put(-48,-17){$\Omega^2_R$}
\end{center}
\caption{The pseudo-polar grid $\Omega_R=\Omega_R^1\cup\Omega_R^2$ for $N=4$ and $R=4$.}
\label{fig:PPgridR}
\end{figure}
We remark that the pseudo-polar grid introduced in \cite{ACDIS08} coincides with $\Omega_R$ for the particular choice $R=2$. It should be
emphasized that $\Omega_R = \Omega_R^1 \cup \Omega_R^2$ is not a disjoint partitioning, nor is the mapping $(n,\ell) \mapsto
(-\tfrac{2n}{R}\cdot\tfrac{2\ell}{N},\tfrac{2n}{R})$ or $(\tfrac{2n}{R},-\tfrac{2n}{R}\cdot\tfrac{2\ell}{N})$ injective.
In fact,  the center
\beq \label{eq:00}
\cC = \{(0,0)\}
\eeq
appears $N+1$ times in $\Omega_R^1$ as well as $\Omega_R^2$, and the points on the seam lines
\begin{eqnarray*}
\cS_R^1 & = &  \{(-\tfrac{2n}{R},\tfrac{2n}{R}) : -\tfrac{RN}{2} \le n \le \tfrac{RN}{2}, \, n \neq 0\},\\
\cS_R^2 & = &  \{(\tfrac{2n}{R},-\tfrac{2n}{R}) : -\tfrac{RN}{2} \le n \le \tfrac{RN}{2}, \, n \neq 0\}.
\end{eqnarray*}
appear in both $\Omega_R^1$ and $\Omega_R^2$.

\begin{definition}
\label{defi:ppft}
Let $N, R$ be positive integer, and let $\Omega_R$ be the pseudo-polar grid given by \eqref{eq:OmegaR}. For an $N\times N$ image
$I:=\{I(u,v) : -\tfrac{N}{2}\le u,v \le \tfrac{N}{2}-1\}$, the {\em pseudo-polar Fourier transform (PFFT) $\hat{I}$} of $I$ evaluated
on $\Omega_R$ is then defined to be
\[
\hat{I}(\ox,\oy) = \sum_{u,v=-N/2}^{N/2-1}I(u,v)e^{-\frac{2\pi i}{m_0}(u\ox+v\oy)}, \quad (\ox,\oy)\in\Omega_R,
\]
where $m_0 \ge N$ is an integer.
\end{definition}

We wish to mention that $m_0 \ge N$ is typically set to be $m_0 = \tfrac{2}{R}(RN+1)$ for computational reasons (see also \cite{ACDIS08}),
but we for now allow more generality.


\subsubsection{Fast PPFT}
\label{subsubsec:forward}

It was shown in \cite{ACDIS08}, that the PPFT can be realized in $O(N^2\log N)$ flops with  $N\times N$ being the size of the input image.
We will now discuss how the extended pseudo-polar Fourier transform as defined in Definition \ref{defi:ppft} can be computed with similar
complexity.

For this, let $I$ be an image of size $N\times N$. Also, $m_0$ is set -- but not restricted -- to be $m_0 = \frac{2}{R}(RN+1)$; we will
elaborate on this choice at the end of this subsection. We now focus on $\Omega_R^1$, and mention that the PPFT on the other cone
can be computed similarly. Rewriting the pseudo-polar Fourier transform from Definition \ref{defi:ppft}, for $(\ox,\oy)
= (-\tfrac{2n}{R}\cdot\tfrac{2\ell}{N},\tfrac{2n}{R}) \in  \Omega_R^1$, we obtain
\begin{eqnarray} \nonumber
\hat{I}(\ox,\oy)
&=&\sum_{u,v=-N/2}^{N/2-1}I(u,v)e^{-\frac{2\pi i}{m_0}(u\ox+v\oy)}\\ \nonumber
&=&\sum_{u=-N/2}^{N/2-1}\sum_{v=-N/2}^{N/2-1}I(u,v)e^{-\frac{2\pi i}{m_0}(u\frac{-4n\ell}{RN}+v\frac{2n}{R})}\\ \label{eq:PPFTrewrite}
&=& \sum_{u=-N/2}^{N/2-1}\left(\sum_{v=-N/2}^{N/2-1}I(u,v)e^{-\frac{2\pi i vn}{RN+1}}\right)e^{-{2\pi i u\ell}\cdot \frac{-2n}{(RN+1) \cdot N}}.
\end{eqnarray}

This rewritten form, i.e., \eqref{eq:PPFTrewrite}, suggests that the pseudo-polar Fourier transform $\hat{I}$ of $I$ on $\Omega_R^1$ can be obtained by performing the 1D FFT
on the extension of $I$ along direction $v$ and then applying a fractional Fourier transform (frFT) along direction $u$.
To be more specific, we require the following operations:

{\em Fractional Fourier Transform.} For $c\in\bC^{N+1}$, the {\em (unaliased) discrete
fractional Fourier transform by $\alpha \in \bC$} is defined to be
\[
(F_{N+1}^\alpha c)(k) :=\sum_{j=-N/2}^{N/2} c(j)e^{-2\pi i \cdot j\cdot k \cdot\alpha}, \quad k = -\tfrac{N}{2}, \ldots, \tfrac{N}{2}.
\]
It was shown in \cite{Bailey:frFT}, that the fractional Fourier transform $F_{N+1}^{\alpha}c$ can be computed using
$O(N\log N)$ operations. For the special case of $\alpha = 1/(N+1)$, the fractional Fourier transform becomes the
(unaliased) 1D discrete Fourier Transform (1D FFT), which in the sequel will be denoted by $F_1$. Similarly,
the 2D discrete Fourier Transform (2D FFT) will be denoted by $F_2$, and the inverse of the $F_2$ by $F_2^{-1}$ (2D iFFT).

{\em Padding Operator}. For $N$ even, $m>N$ an odd integer, and $c\in\bC^{N}$, the {\em padding operator} $E_{m,n}$
gives a symmetrically zero padding version of $c$ in the sense that
\[
(E_{m,N}c)(k) =
\begin{cases}
c(k) & k=-\tfrac{N}{2}, \ldots, \tfrac{N}{2}-1,\\
0     & k\in\{-\tfrac{m}{2}, \ldots, \tfrac{m}{2}\}\setminus\{-\tfrac{N}{2}, \ldots, \tfrac{N}{2}-1\}.
\end{cases}
\]

Using these operators, \eqref{eq:PPFTrewrite} can be computed by
\begin{eqnarray*}
\hat{I}(\ox,\oy)
&=& \sum_{u=-N/2}^{N/2-1} F_1 \circ E_{RN+1,N} \circ I(u,n) e^{-{2\pi i u\ell}\cdot \frac{-n}{(RN+1) \cdot N/2}}\\
&=& \sum_{u=-N/2}^{N/2} E_{N+1,N} \circ F_1 \circ E_{RN+1,N} \circ I(u,n) e^{-{2\pi i u\ell}\cdot \frac{-2n}{(RN+1) \cdot N}}\\
&=& (F_{N+1}^{\alpha_n}\tilde I(\cdot,n))(\ell),
\end{eqnarray*}
where $\tilde{I} = E_{N+1,N} \circ F_1 \circ E_{RN+1,N} \circ I \in \bC^{(RN+1)\times (N+1)}$ and $\alpha_n=-\frac{n}{(RN+1)N/2}$.
Since the 1D FFT and 1D frFT require only $O(N\log N)$ operations for a vector of size $N$, the total complexity of this algorithm
for computing the pseudo-polar Fourier transform from Definition \ref{defi:ppft} is indeed $O(N^2\log N)$ for an image of size
$N\times N$.

We would like to also remark that for a different choice of constant $m_0$, one can  compute the pseudo-polar Fourier transform
also with complexity $O(N^2\log N)$ for an image of size $N\times N$. This however requires application of the fractional Fourier
transform in both directions $u$ and $v$ of the image, which results in a larger constant for the computational cost; see also
\cite{Bailey:frFT}.


\subsection{Density-Compensation Weights}
\label{subsec:weights}

Next we tackle Step (S2), which is more delicate than it might seem, since the weights will not be derivable from simple density
compensation arguments.

\subsubsection{A Plancherel Theorem for the PPFT}

For this, we now aim to choose weights $w : \Omega_R \to \bR^+$ so that the extended PPFT from Definition
\ref{defi:ppft} becomes an isometry, i.e.,
\beq \label{eq:ppPlancherel}
\sum_{u, v = -N/2}^{N/2-1} |I(u,v)|^2
= \sum_{(\ox, \oy) \in \Omega_R} \hspace*{-0.3cm} w(\ox,\oy) \cdot |\hat{I}(\ox,\oy)|^2.
\eeq
Observing the symmetry of the pseudo-polar grid, it seems natural to select weight functions $w$ which have full axis symmetry
properties, i.e., for all $(\ox,\oy)\in\Omega_R$, we require
\begin{equation}\label{w:full-axis-symmetry}
w(\ox, \oy) = w(\oy, \ox),\; w(\ox, \oy) = w(-\ox, \oy),\; w(\ox, \oy) = w(\ox, -\oy).
\end{equation}
Then the following `Plancherel theorem' for the pseudo-polar Fourier transform on $\Omega_R$ -- similar to the one for the
Fourier transform  on the cartesian grid -- can be proved.

\begin{theorem}[\cite{KSZ11}]\label{thm:weight}
Let $N$ be even, and let $w : \Omega_R \to \bR^+$ be a weight function satisfying \eqref{w:full-axis-symmetry}. Then
\eqref{eq:ppPlancherel} holds,  if and only if,  the weight function $w$ satisfies
\begin{eqnarray}\nonumber
\delta(u,v) & =  &w(0,0) \\\nonumber &+&4 \cdot \sum_{\ell = 0, N/2} \sum_{n = 1}^{RN/2}
w(\tfrac{2n}{R}, \tfrac{2n}{R}\cdot\tfrac{-2\ell}{N}) \cdot \cos(2\pi  u\cdot \tfrac{2n}{m_0R})\cdot \cos(2\pi v \cdot \tfrac{2n}{m_0R}\cdot\tfrac{2\ell}{N})\\
\label{cond:isometry}
&  +& 8 \cdot \sum_{\ell = 1}^{N/2-1} \sum_{n = 1}^{RN/2}
w(\tfrac{2n}{R}, \tfrac{2n}{R}\cdot\tfrac{-2\ell}{N})  \cdot \cos(2\pi  u\cdot \tfrac{2n}{m_0R})\cdot \cos(2\pi v \cdot \tfrac{2n}{m_0R}\cdot\tfrac{2\ell}{N})
\end{eqnarray}
for all $-N+1 \le u, v \le N-1$.
\end{theorem}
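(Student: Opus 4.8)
The plan is to reduce the isometry \eqref{eq:ppPlancherel}, which is required for \emph{every} image $I$, to a single pointwise identity for a convolution kernel, and then to evaluate that kernel using the symmetry \eqref{w:full-axis-symmetry} together with the geometry of $\Omega_R$. First I would expand the right-hand side of \eqref{eq:ppPlancherel}: writing $|\hat I(\ox,\oy)|^2 = \hat I(\ox,\oy)\overline{\hat I(\ox,\oy)}$ and inserting Definition \ref{defi:ppft} produces a quadruple sum over $(u,v)$ and $(u',v')$, and interchanging the (finite) summations makes the weight sum decouple into
\[
K(p,q) := \sum_{(\ox,\oy)\in\Omega_R} w(\ox,\oy)\, e^{-\frac{2\pi i}{m_0}(p\ox + q\oy)}.
\]
The right-hand side of \eqref{eq:ppPlancherel} then equals $\sum_{u,v,u',v'} I(u,v)\overline{I(u',v')}\,K(u-u',v-v')$, while the left-hand side is the same expression with $K$ replaced by the Kronecker delta $\delta$. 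Hence \eqref{eq:ppPlancherel} asks that a fixed quadratic expression in $I$ vanish identically.

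Second, I would invoke the fact that a Hermitian form $I^{*}MI$ vanishes for all $I\in\bC^{N\times N}$ if and only if $M=0$. Here $M_{(u,v),(u',v')}=K(u-u',v-v')-\delta(u-u',v-v')$, and since $w$ is real and satisfies \eqref{w:full-axis-symmetry} one has $\overline{K(p,q)}=K(-p,-q)$, so $M$ is indeed Hermitian. As $(p,q)=(u-u',v-v')$ runs over exactly $\{-N+1,\dots,N-1\}^2$ while $(u,v),(u',v')$ range over the image grid, the vanishing of $M$ is equivalent to $K(p,q)=\delta(p,q)$ for all $-N+1\le p,q\le N-1$. Everything therefore reduces to computing $K$ and matching it with the right-hand side of \eqref{cond:isometry}.

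Third -- the substantive step -- I would evaluate $K(u,v)$ by grouping the points of $\Omega_R$ into orbits of the symmetry group generated by the three reflections in \eqref{w:full-axis-symmetry} (the two sign flips and the coordinate swap $\ox\leftrightarrow\oy$). On each orbit $w$ is constant, and summing the complex-conjugate pairs of exponentials collapses them into products of cosines, so that $K$ becomes a real, $(u,v)$-symmetric sum of the type in \eqref{cond:isometry}. Here the non-injectivity noted after \eqref{eq:00} must be handled carefully: the center \eqref{eq:00} is counted once and yields the isolated term $w(0,0)$; the points with $\ell=0$ (on the coordinate axes) and with $\ell=N/2$ (on the seam lines $\cS_R^1,\cS_R^2$) sit in short orbits and carry the prefactor $4$; and the generic interior points $1\le\ell\le N/2-1$ form full orbits and carry the prefactor $8$. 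Reading off the coordinates $\tfrac{2n}{R}$ and $\tfrac{2n}{R}\cdot\tfrac{2\ell}{N}$ as the cosine arguments then produces exactly \eqref{cond:isometry}, and combining this with the reduction $K=\delta$ finishes the proof.

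I expect the orbit bookkeeping in the third step to be the main obstacle: one must ensure that the shared center and seam-line points of $\Omega_R^1\cap\Omega_R^2$ are neither double-counted nor dropped, and verify that the orbit sizes really are $1$, $4$, and $8$ in the three regimes, so that the collapsed exponentials assemble into the precise weighted cosine sum claimed. The hypotheses that $N$ is even and that $R,N$ are integral enter precisely here, guaranteeing that $\ell=N/2$ is an admissible index and that the boundary cases of the $n$-summation carry the stated weights. By contrast, the reduction to the kernel identity $K=\delta$ in the first two steps is routine once the Hermitian-form fact is in hand.
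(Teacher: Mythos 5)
Your first two steps are sound and essentially identical to the paper's own argument: the paper reduces \eqref{eq:ppPlancherel} to the kernel identity
\[
K(u,v):=\sum_{(\ox,\oy)\in\Omega_R}w(\ox,\oy)\,e^{-\frac{2\pi i}{m_0}(u\ox+v\oy)}=\delta(u,v),
\qquad -N+1\le u,v\le N-1,
\]
by testing \eqref{eq:ppPlancherel} on images supported on two points, which is precisely the polarization fact you invoke (a quadratic form over $\bC$ vanishes identically iff its matrix vanishes). Up to this point the two arguments differ only in packaging.

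The genuine gap is in your third step. Your orbit sizes ($1$ for the center, $4$ for axis and seam points, $8$ for interior points) are correct, but the collapsed cosine sums are not what you assert, because the exponential factor is \emph{not} invariant along an orbit: the coordinate swap in the symmetry group exchanges which of $u$, $v$ gets paired with the radial coordinate. Concretely, the axis orbit $\{(0,\pm\tfrac{2n}{R}),(\pm\tfrac{2n}{R},0)\}$ contributes
\[
2\,w\bigl(\tfrac{2n}{R},0\bigr)\Bigl[\cos\bigl(2\pi u\tfrac{2n}{m_0R}\bigr)+\cos\bigl(2\pi v\tfrac{2n}{m_0R}\bigr)\Bigr],
\]
not the term $4\,w(\tfrac{2n}{R},0)\cos(2\pi u\tfrac{2n}{m_0R})$ that \eqref{cond:isometry} carries for $\ell=0$; likewise an interior orbit contributes $4w[\cos(\alpha u)\cos(\beta v)+\cos(\beta u)\cos(\alpha v)]$ rather than $8w\cos(\alpha u)\cos(\beta v)$, where $\alpha=2\pi\tfrac{2n}{m_0R}$ and $\beta=\alpha\tfrac{2\ell}{N}$. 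Only the seam orbits, where $\alpha=\beta$, come out as you claim. Hence what your orbit computation actually yields is $K(u,v)=\tfrac12[\tilde K(u,v)+\tilde K(v,u)]$, where $\tilde K$ denotes the right-hand side of \eqref{cond:isometry}; that is, you have shown Plancherel is equivalent to the $(u,v)$-\emph{symmetrization} of \eqref{cond:isometry}. This immediately gives the implication \eqref{cond:isometry} $\Rightarrow$ \eqref{eq:ppPlancherel}, but the converse direction --- that $K=\delta$ forces the non-symmetric function $\tilde K$ itself to equal $\delta$, i.e.\ that the antisymmetric part of $\tilde K$ vanishes on the solution set --- is an additional claim that orbit bookkeeping cannot deliver and that your proposal never addresses. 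For comparison, the paper first rewrites the kernel identity as \eqref{eq:isometry1}, whose left-hand side is genuinely $(u,v)$-symmetric because it pairs $u$ with $\ox$ for \emph{every} point of $\Omega_R$ (including the points of $\Omega_R^1$, where $|\ox|\le|\oy|$), and only then asserts --- without detailed argument --- the equivalence with \eqref{cond:isometry}; so the step at which your derivation goes wrong is exactly the step the paper leaves to the reader, and closing your proof would require supplying it rather than claiming it falls out of the orbit decomposition.
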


\begin{proof}
We start by computing the right hand side of \eqref{eq:ppPlancherel}:
\begin{eqnarray*}
\lefteqn{\sum_{(\ox, \oy) \in \Omega_R} w(\ox,\oy) \cdot |\hat{I}(\ox,\oy)|^2}\\
& = & \sum_{(\ox, \oy) \in \Omega_R} w(\ox,\oy)
\cdot \left|\sum_{u, v = -N/2}^{N/2-1}  I(u,v) e^{-\frac{2\pi i}{m_0}(u \ox + v \oy)}\right|^2\\
& = & \sum_{(\ox, \oy) \in \Omega_R} w(\ox,\oy)
\cdot \left[\sum_{u, v = -N/2}^{N/2-1} \sum_{u', v' = -N/2}^{N/2-1} \hspace*{-0.18cm} I(u,v) \overline{I(u',v')}
e^{-\frac{2\pi i}{m_0}((u-u') \ox + (v-v') \oy)}\right]\\
& = & \sum_{(\ox, \oy) \in \Omega_R} w(\ox,\oy) \cdot  \sum_{u, v = -N/2}^{N/2-1} |I(u,v)|^2\\
& & + \sum_{\stackrel{u, v, u', v' = -N/2}{(u,v) \neq (u',v')}}^{N/2-1} I(u,v) \overline{I(u',v')} \cdot
\left[ \sum_{(\ox, \oy) \in \Omega_R} w(\ox,\oy) \cdot e^{-\frac{2\pi i}{m_0}((u-u') \ox + (v-v') \oy)}
\right].
\end{eqnarray*}
Choosing $I=c_{u_1,v_1}\delta{(u-u_1,v-v_1)}+c_{u_2,v_2}\delta{(u-u_2,v-v_2)}$ for all $-N/2\le u_1,$ $v_1,$ $u_2,$ $v_2\le N/2-1$
and for all $c_{u_1,v_1},c_{u_2,v_2}\in\CC$, we can conclude that \eqref{eq:ppPlancherel} holds if and only if
\[
 \sum_{(\ox, \oy) \in \Omega_R} w(\ox,\oy) \cdot e^{-\frac{2\pi i}{m_0}(u\ox + v\oy)} =
  \delta(u,v),\quad -N+1 \le u, v \le N-1.
\]
By the symmetry of the weights \eqref{w:full-axis-symmetry}, this is equivalent to
\begin{equation}\label{eq:isometry1}
\sum_{(\ox, \oy) \in \Omega_R} w(\ox,\oy) \cdot
[\cos(\tfrac{2\pi}{m_0} u \ox)\cos(\tfrac{2\pi}{m_0} v \oy)] = \delta(u,v)
\end{equation}
for all $-N+1\le u,v\le N-1$. From this, we can deduce that \eqref{eq:isometry1} is equivalent to \eqref{cond:isometry},
which proves the theorem.
\qed
\end{proof}

Notice that \eqref{cond:isometry} is a linear system with $RN^2/4+RN/2+1$ unknowns and $(2N-1)^2$ equations, wherefore,
in general, one needs the oversampling factor $R$ to be at least $16$ to enforce solvability.

\subsubsection{Relaxed Form of Weight Functions}
\label{subsubsec:relaxed}

The computation of the weights satisfying Theorem~\ref{thm:weight} by solving the full linear system of equations \eqref{cond:isometry}
is much too complex. Hence, we relax the requirement for exact isometric weighting,
and represent the weights in terms of undercomplete basis functions on the pseudo-polar grid.

More precisely, we first choose a set of basis functions $w_1, \ldots, w_{n_0}:\Omega_R\rightarrow \bR^+$ such that
\[
\sum_{j=1}^{n_0} w_j(\ox,\oy)\neq 0 \quad \mbox{for all } (\ox,\oy)\in\Omega_R.
\]
We then represent weight functions $w:\Omega_R\rightarrow\bR^+$ by
\beq \label{eq:compute_w}
w:=\sum_{j=1}^{n_0} c_jw_j,
\eeq
with $c_1,\ldots,c_{n_0}$ being nonnegative constants. This approach now enables solving \eqref{cond:isometry} for the
constants $c_1,\ldots,c_{n_0}$ using the least squares method, thereby reducing the computational complexity significantly.
The `full' weight function $w$ is then given by \eqref{eq:compute_w}.

We next present two different choices of weights which were derived by this relaxed approach. Notice that $(\ox, \oy)$
and $(n,\ell)$ will be used interchangeably.

{\bf Choice 1}. The set of basis functions $w_1, \ldots, w_5$ is defined as follows:\\
{\em Center}:
\[
w_1 = 1_{(0,0)},
\]
{\em Boundary}:
\[
w_2=1_{\{(\ox, \oy) : |n|=NR/2,\, \ox=\oy\}} \mbox{ and } w_3=1_{\{(\ox, \oy) : |n|=NR/2,\, \ox\neq\oy\}},
\]
{\em Seam lines}:
\[
w_4=|n| \cdot 1_{\{(\ox, \oy) : 1\le|n|<NR/2,\, \ox=\oy\}},
\]
{\em Interior}:
\[
w_5=|n| \cdot 1_{\{(\ox, \oy) : 1\le|n|<NR/2,\,\ox\neq\oy\}}.
\]

{\bf Choice 2}. The set of basis functions $w_1, \ldots, w_{N/2+2}$ is defined as follows:\\
{\em Center}:
\[
w_1 = 1_{(0,0)},
\]
{\em Radial Lines}:
\[
w_{\ell+2}=1_{\{(\ox, \oy) :  1<|n|<NR/2,\, \oy=\frac{\ell}{N/2}\ox\}},\quad \ell=0,1,\ldots,N/2.
\]

The associated weight functions are displayed in Fig.~\ref{fig:001}. In general, suitable weight functions usually obey
the pattern of linearly increasing values along the radial direction. Thus, this is a natural requirement for the basis functions.
\begin{figure}[ht]
\begin{center}
\includegraphics[height=1.2in]{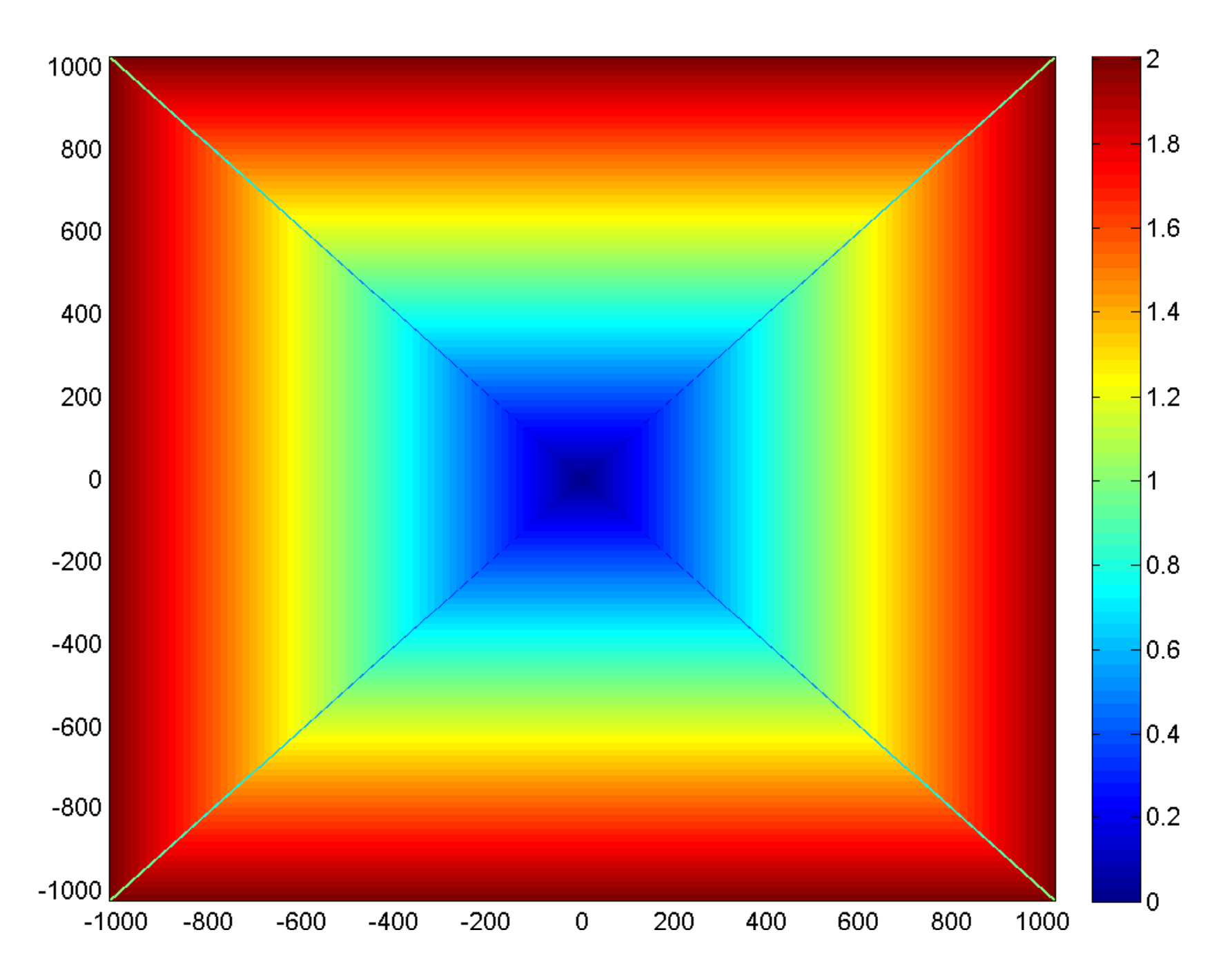}
\put(-70,-17){Choice 1}
\hspace*{1cm}
\includegraphics[height=1.2in]{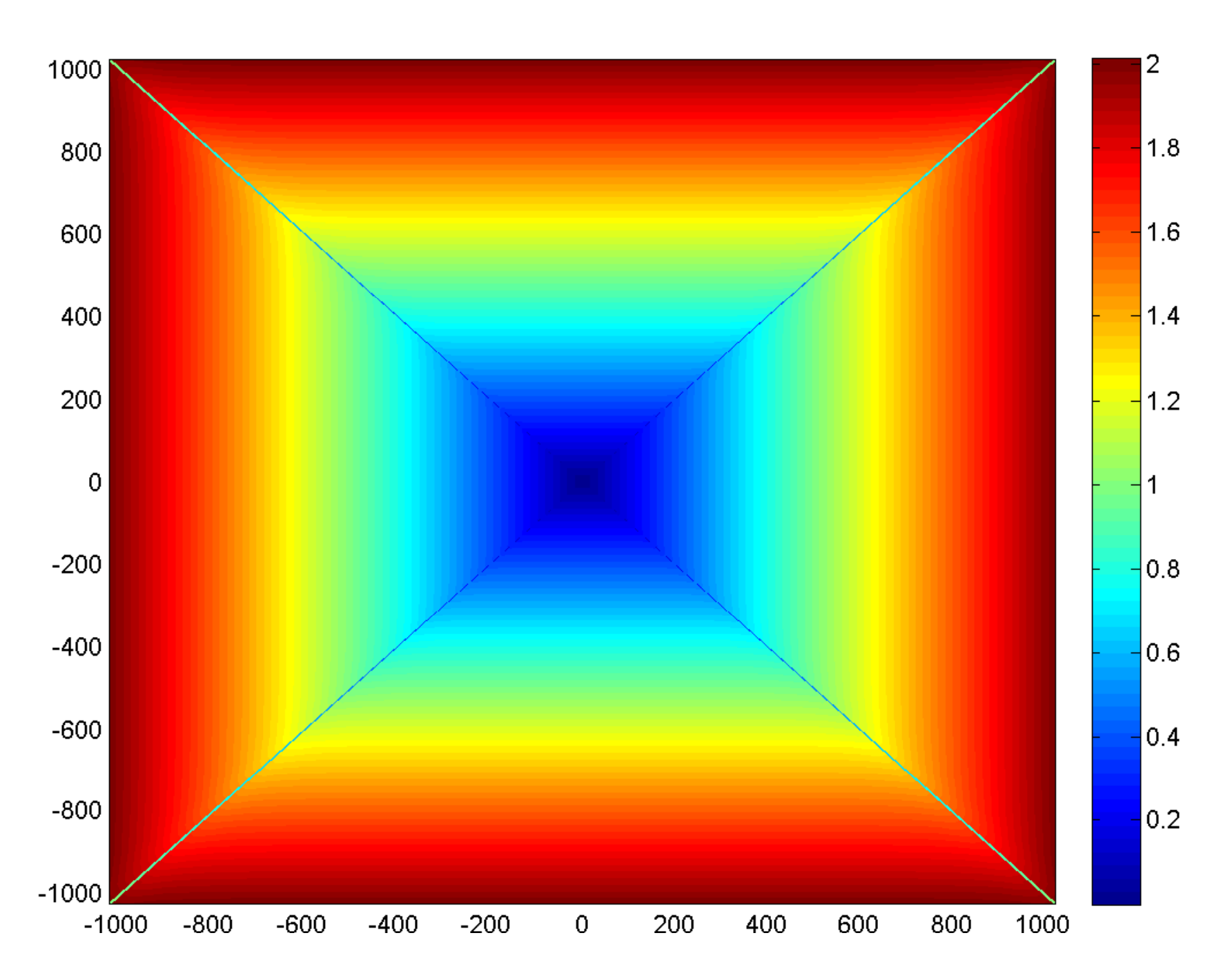}
\put(-70,-17){Choice 2}
\end{center}
\caption{Weight functions on the pseudo-polar grid for $N=256$ and  $R=8$.}
\label{fig:001}
\end{figure}
%

\subsubsection{Comparison of Weights}
\label{subsubsec:comparison}

A visual comparison shows that the patterns of the weight functions associated with Choices 1 and 2 are seemingly similar
(see Fig.~\ref{fig:001}). However, carefully chosen measures reveal that their performances can in fact be quite different.

One essential criterion for the quality of a weight function is the degree to which it allows a Plancherel theorem
for the pseudo-polar Fourier transform as studied in Theorem \ref{thm:weight}. This can be measured in the following way --
the reader might want to compare this performance measure with the measures introduced in Subsection \ref{subsec:isometry}:
Let $P$ and $P^\star$ denote the operators for the pseudo-polar Fourier transform and its adjoint, respectively,
and let $w$ -- by slightly abusing notation -- denote the weighting operator on the pseudo-polar grid $\Omega_R$. Letting $R=8$,
a sequence of 5 random images $I_1$, $\ldots$, $I_5$ of size $N\times N$ with standard normally distributed entries is
generated to compute
\[
M_1 := \frac15\sum_{i=1}^5\frac{\|P^\star w P I_i-I_i\|_2}{\|I_i\|_2}.
\]
The performance of the weight functions arising from Choices 1 and 2 with respect to this measure is presented in Table
\ref{tab:10}.
\begin{table}[ht]
\caption{Comparison of Choices 1 and 2 based on performance measure $M_1$.}
\label{tab:10}
\begin{tabular}{p{1.8cm}p{1.8cm}p{1.8cm}p{1.8cm}p{1.8cm}p{2.0cm}}
\hline\noalign{\smallskip}
$N$ & 32 & 64 & 128 & 256 & 512 \\
\noalign{\smallskip}\svhline\noalign{\smallskip}
Choice 1 & 4.2E-3 & 4.0E-3 & 1.8E-3 & 1.5E-3 & 8.8E-4 \\
Choice 2 & 9.8E-3 & 6.2E-3 & 3.4E-3 & 2.1E-3 & N/A \\
\noalign{\smallskip}\hline\noalign{\smallskip}
\end{tabular}
\end{table}

Interestingly, a structured image, e.g., by using the measure
\[
M_2 := \frac{\|P^\star w P I-I\|_2}{\|I\|_2}, \quad I \; \mbox{the image `Barbara',}
\]
yields an even better performance and a better distinction, see Table \ref{tab:11}. One could reason
that this behavior is due to the fact that the energy of most `real' images is concentrated in the
low frequency region, in which density compensation of the pseudo-polar grid is not as necessary as
in the high frequency regions.
\begin{table}[ht]
\caption{Comparison of Choices 1 and 2 based on the performance measure $M_2$.}
\label{tab:11}
\begin{tabular}{p{1.8cm}p{1.8cm}p{1.8cm}p{1.8cm}p{1.8cm}p{2.0cm}}
\hline\noalign{\smallskip}
$N$ & 32 & 64 & 128 & 256 & 512 \\
\noalign{\smallskip}\svhline\noalign{\smallskip}
Choice 1 & 2.8E-3 & 1.2E-3 & 8.3E-4 & 3.9E-4 & 1.5E-4 \\
Choice 2 & 5.6E-3 & 2.8E-3 & 2.2E-3 & 9.1E-4 & N/A \\
\noalign{\smallskip}\hline\noalign{\smallskip}
\end{tabular}
\end{table}

These two tables show firstly, that with growing $N$, the weighted pseudo-polar Fourier transform seems to converge
to being an isometry on the testing image class. Secondly, judging from the relatively small deviation from being an
isometry, it seems quite reasonable to choose a basis of weight functions forcing the weights to linearly increase
along the radial direction. And, thirdly, although Choice 2 contains many more basis functions than Choice 1, the
performance results are worse, which is very counterintuitive. The reason for this is the numerical instability
when computing a minimizing set of coefficients for the basis of weight functions, which causes these effects.

\subsubsection{Computation of the Weighting}
\label{subsubsec:weighting}

For the FDST --  as also in the implementation in \url{ShearLab} -- the coefficients in the expansion
\eqref{eq:compute_w} will be computed off-line, and then hardwired in the code. This enables the weighting of a function
on the pseudo-polar grid to simply be a point-wise multiplication in each sampling point. That is, letting $J:=\hat{I}:
\Omega_R\rightarrow\bC$ be the pseudo-polar Fourier transform of an $N \times N$ image $I$ and $w:\Omega_R\rightarrow \bR^+$ be
any suitable weight function on $\Omega_R$, the values
\[
J_w(\ox,\oy) = J(\ox,\oy)\cdot \sqrt{w(\ox,\oy)} \quad \mbox{for all } (\ox,\oy) \in \Omega_R
\]
need to be computed.

Let us comment on why the square root of the weight is utilized. If the weights $w$ satisfy the
condition in Theorem \ref{thm:weight}, we obtain $P^* w P = \Id$, which can be written in a symmetric form as
follows: $(\sqrt{w} P)^* \sqrt{w} P = \Id$. This form shows that the operator $\sqrt{w} P$ can be inverted by taking
the adjoint $(\sqrt{w} P)^*$. In other words, each image can be reconstructed from its weighted pseudo-polar Fourier
transform by applying the adjoint of the weighted pseudo-polar Fourier transform. This issue will be discussed in
further detail in Subsection \ref{subsubsec:adjoint}.


\subsection{Digital Shearlets on Pseudo-Polar Grid}
\label{subsec:DSHOnPPGrid}

We next aim at deriving a faithful digitization of the shearlet transform associated with a band-limited cone-adapted
discrete shearlet system to the pseudo-polar grid. This would settle Step (S3).

\subsubsection{Preparation for Faithful Digitization}

For this, let us recall the definition of the discrete shearlet transform associated with \eqref{def:csht}; taking
the particular form \eqref{eq:psidef} of the shearlet $\psi \in L^2(\R^2)$ into account. Restricting our attention
to the cone $\cC_{21}$, we obtain
\begin{eqnarray*}
f & \mapsto & \Big\langle \hat{f},2^{-j\tfrac32}\hat\psi(S_k^TA_{4^{-j}}\cdot)\chi_{\cC_{21}} e^{2\pi i \ip{A_{4^{-j}}S_km}{\cdot}} \Big\rangle\\
& & = \Big\langle \hat{f},2^{-j\tfrac32}\hat{\psi}_1(4^{-j}\xi_1) \hat{\psi}_2(k + 2^{j}\tfrac{\xi_2}{\xi_1}) \chi_{\cC_{21}}e^{2\pi i \ip{A_{4^{-j}}S_km}{\cdot}}
\Big\rangle,
\end{eqnarray*}
for scale $j$, orientation $k$, position $m$, and cone $\iota$.

To approach a faithful digitization, we first
have to partition $\Omega_R$ according to the partitioning of the plane into $\cC_{11}$, $\cC_{12}$, $\cC_{21}$, and $\cC_{22}$,
as well as a centered rectangle $\cR$. The center  $\cC$ as defined in  \eqref{eq:00} will play the role of $\cR$.
Thus it remains to partition the
set $\Omega_R$ beyond the already defined partitioning into $\Omega_R^1$ and $\Omega_R^2$ (cf. \eqref{eq:OmegaR1} and
\eqref{eq:OmegaR2}) by setting
\[
\Omega_R^1 = \Omega_R^{11} \cup \cC \cup \Omega_R^{12}
\qquad \mbox{and} \qquad
\Omega_R^2 = \Omega_R^{21} \cup \cC \cup \Omega_R^{22},
\]
where
\begin{eqnarray*}
\Omega_R^{11} & = &  \{(-\tfrac{2n}{R}\cdot\tfrac{2\ell}{N},\tfrac{2n}{R}) : -\tfrac{N}{2} \le \ell \le \tfrac{N}{2}, \, 1 \le n \le \tfrac{RN}{2}\},\\
\Omega_R^{12} & = &  \{(-\tfrac{2n}{R}\cdot\tfrac{2\ell}{N},\tfrac{2n}{R}) : -\tfrac{N}{2} \le \ell \le \tfrac{N}{2}, \, -\tfrac{RN}{2} \le n \le -1\},\\
\Omega_R^{21} & = &  \{(\tfrac{2n}{R},-\tfrac{2n}{R}\cdot\tfrac{2\ell}{N}) : -\tfrac{N}{2} \le \ell \le \tfrac{N}{2}, \, 1 \le n \le \tfrac{RN}{2}\}\\
\Omega_R^{22} & = &  \{(\tfrac{2n}{R},-\tfrac{2n}{R}\cdot\tfrac{2\ell}{N}) : -\tfrac{N}{2} \le \ell \le \tfrac{N}{2}, \, -\tfrac{RN}{2} \le n \le -1\}.
\end{eqnarray*}

When restricting to the cone $\Omega_R^{21}$, say, the exact digitization of the coefficients of the discrete shearlet system is
\begin{eqnarray}\nonumber
\lefteqn{\sum_{\omega:=(\ox,\oy) \in \Omega_R^{21}} J(\ox,\oy) 2^{-j\frac{3}{2}} \overline{\hat{\psi}(S_k^T A_{4^{-j}} \omega)} e^{-2\pi i\ip{A_{4^{-j}} S_k m}{\omega}}}\\
\nonumber
& = & \sum_{(\ox,\oy) \in \Omega_R^{21}} J(\ox,\oy)
2^{-j\frac{3}{2}} \overline{W(4^{-j}\omega_x)V(k+2^j\tfrac{\oy}{\ox})} e^{-2\pi i\ip{A_{4^{-j}} S_k m}{\omega}}\\ \label{eq:idealshearlet}
& = & \sum_{n=1}^{\frac{RN}{2}} \sum_{\ell = -\frac{N}{2}}^{\frac{N}{2}} J(\ox,\oy)
2^{-j\frac{3}{2}} \overline{W(4^{-j}\tfrac{2n}{R})}  \overline{V(k-2^{j+1}\tfrac{\ell}{N})} e^{-2\pi i \ip{m}{S_k^T A_{4^{-j}} \omega}},
\end{eqnarray}
where $V$ and $W$ as well as the ranges of $j$, $k$, and $m$ are to be carefully chosen.

Our main objective will be to achieve a digital shearlet transform, which is an isometry. This --  as in the continuum domain
situation -- is equivalent to requiring the associated shearlet system to form a tight frame for functions $J : \Omega_R \to \CC$.
For the convenience of the reader let us recall the notion of a Parseval frame in this particular situation. A sequence
$(\varphi_\lambda)_{\lambda \in \Lambda}$ -- $\Lambda$ being some indexing set -- is a {\em tight frame} for all functions
$J : \Omega_R \to \CC$, if
\[
\sum_{\lambda \in \Lambda} \Big| \sum_{(\ox, \oy) \in \Omega_R} J(\ox, \oy) \overline{\varphi_\lambda(\ox, \oy)} \, \Big|^2
= \sum_{(\ox, \oy) \in \Omega_R} |J(\ox,\oy)|^2.
\]

In the sequel we will define digital shearlets on $\Omega_R^{21}$ and extend the definition to the other cones by symmetry.

\subsubsection{Subband Windows on the Pseudo-Polar Grid}
\label{subsec:subbandwindows}

We start by defining the scaling function, which will depend on two functions $V_0$ and $W_0$, and the generating digital shearlet,
which will depend on again two functions $V$ and $W$. $W_0$ and $W$ will be chosen to be Fourier transforms of wavelets, and
$V_0$ and $V$ will be chosen to be `bump' functions, paralleling the construction of classical shearlets.

First, let $W_0$ be the Fourier transform of the Meyer scaling function such that
\beq \label{eq:defW0}
\Sp W_0 \subseteq [-1,1]\quad\mbox{and}\quad W_0(\pm 1)=0,
\eeq
and let $V_0$ be a `bump' function satisfying
\[
\Sp V_0 \subseteq [-3/2,3/2] \qquad \mbox{with} \qquad V_0(\xi)\equiv 1\mbox{ for } |\xi|\le 1, \xi\in\bR.
\]
Then we define the {\em scaling function} $\phi$ for the digital shearlet system to be
\[
\hat\phi(\xi_1,\xi_2) = W_0(4^{-j_L}\xi_1)V_0(4^{-j_L}\xi_2), \quad (\xi_1,\xi_2)\in\bR^2.
\]
For now, we define it in continuum domain, and will later restrict this function to the pseudo-polar grid.

Let next $W$ be the Fourier transform of the Meyer wavelet function satisfying the support constraints
\beq \label{eq:supportW}
\Sp W \subseteq [-4,1/4] \cup [1/4,4] \quad \mbox{and} \quad W(\pm 1/4)=W(\pm 4)=0,
\eeq
as well as, choosing the lowest scale $j_L$ to be $j_L:=-\lceil\log_4(R/2)\rceil$,
\beq \label{eq:summabilityW}
|W_0(4^{-j_L}\xi)|^2 + \sum_{j =j_L}^{\lceil\log_4 N\rceil}  |W(4^{-j} \xi)|^2 = 1 \qquad \mbox{for all } |\xi| \le N,\; \xi \in \bR.
\eeq
We further choose $V$ to be a `bump' function  satisfying
\beq \label{eq:supportV}
\Sp V \subseteq [-1,1] \quad \mbox{and} \quad V(\pm 1)=0,
\eeq
as well as
\beq \label{eq:summabilityV1}
|V(\xi-1)|^2 + |V(\xi)|^2 + |V(\xi+1)|^2 = 1 \qquad \mbox{for all } |\xi| \le 1, \; \xi \in \bR.
\eeq
Then the {\em generating shearlet} $\psi$ for the digital shearlet system on $\Omega_R^2$ is defined as
\beq \label{eq:digitalshearlet}
\hat\psi(\xi_1,\xi_2) = W(\xi_1)V(\tfrac{\xi_2}{\xi_1}), \quad (\xi_1,\xi_2)\in\bR^2.
\eeq
Notice that \eqref{eq:summabilityV1} implies
\beq \label{eq:summabilityV2}
\sum_{s=-2^j}^{2^j} |V(2^j\xi-s)|^2 = 1 \qquad \mbox{for all } |\xi| \le 1, \; \xi \in \bR; j\ge0,
\eeq
which will become important for the analysis of frame properties. For the particular choice of $V_0$, $W_0$, $V$, and $W$
in \url{ShearLab}, we refer to Subsection~\ref{subsubsec:windowing}.

\subsubsection{Range of Parameters}

We from now on assume that $R$ and $N$ are both positive even integers and that $N=2^{n_0}$ for some integer $n_0\in\bN$.
This poses no restrictions, since both parameters can be enlarged to satisfy this condition.

We start by analyzing the range of $j$. Recalling the definition of the shearlet $\psi$ in \eqref{eq:digitalshearlet}
and the support properties of $W$ and $V$ in \eqref{eq:supportW} and \eqref{eq:supportV}, respectively, we observe that
the digitized shearlet
\beq \label{eq:prototype}
2^{-j\frac{3}{2}} W(4^{-j}\tfrac{2n}{R}) V(k-2^{j+1}\tfrac{\ell}{N}) e^{2\pi i \ip{m}{S_k^T A_{4^{-j}} \omega}}
\eeq
from \eqref{eq:idealshearlet} has radial support
\beq \label{eq:radial_k}
n = 4^{j-1} \tfrac{R}{2} + t_1, \quad t_1 = 0 ,\ldots, 4^{j-1}\cdot \tfrac{15 R}{2}
\eeq
on the cone $\Omega_R^{21}$. To determine the appropriate range of $j$, we will analyze the precise support in radial direction.
If $j<-\lceil\log (R/2)\rceil$, then $n<1$, which corresponds to only one point -- the origin -- and is dealt with by the scaling
function. If $j > \lceil\log_4 N\rceil$, we have $n \ge \frac{RN}{2}$. Hence the value $W(1/4) = 0$ (cf. \eqref{eq:supportW}) is
placed on the boundary, and these scales can be omitted. Therefore, the range of the scaling parameter will be chosen to be
\[
j \in  \{j_L, \ldots, j_H\}, \quad\mbox{where } j_L :=-\lceil\log(R/2)\rceil \mbox{ and } j_H := \lceil\log_4 N\rceil.
\]

Next, we determine the appropriate range of $k$. Again recalling the definition of the shearlet $\psi$ in \eqref{eq:digitalshearlet},
the digitized shearlet \eqref{eq:prototype} has angular support
\beq \label{eq:angular_ell}
\ell = 2^{-j-1} N(k-1) + t_2, \quad t_2 = 0, \ldots, 2^{-j} N
\eeq
on the cone $\Omega_R^{21}$. To compute the range of $k$, we start by examining the case $j\ge 0$.  If $k > 2^j$, we have $\ell \ge N/2$.
Hence the value $V(-1)=0$ (cf. \eqref{eq:supportV}) is placed on the seam line, and these parameters can be omitted. By symmetry,
we also obtain $k \ge -2^j$. Thus the shearing parameter will be chosen to be
\[
k \in \{-2^j,\ldots,2^j\}.
\]

\subsubsection{Support Size of Shearlets}

We next compute the support as well as the support size of scaled and sheared version of digital shearlets. This will be used
for the normalization of digital shearlets.

As before, we first analyze the radial support. By \eqref{eq:radial_k}, the radial supports of the windows associated with
scales $j_L < j < j_H$ is
\beq \label{eq:defi_k}
n = 4^{j-1} \tfrac{R}{2} + t_1, \quad t_1 = 0 ,\ldots, 4^{j-1} \cdot \tfrac{15 R}{2},
\eeq
and the radial support of the windows associated with the scale $j_L=-\lceil \log_4(R/2)\rceil$ and $j_H=\lceil\log_4 N\rceil$ are
\beq \label{eq:defi_k_H_L}
\begin{aligned}
n &= t_1,  &t_1&=1,\ldots,4^{j_L+1}\tfrac{R}{2}, &\mbox{ for } &j=j_L,\\
n &= 4^{j_H-1} \tfrac{R}{2}+t_1, &t_1&=0, \ldots, \tfrac{RN}{2}-4^{j_H-1}\tfrac{R}{2},& \mbox{ for }
&j = j_H.
\end{aligned}
\eeq

Turning to the angular direction, by \eqref{eq:angular_ell}, the angular support of the windows at scale $j$ associated with
shears $-2^j < k < 2^j$ is
\beq \label{eq:defi_ell}
\ell = 2^{-j-1} N(k-1) + t_2, \quad t_2 = 0, \ldots, 2^{-j} N,
\eeq
the angular support at scale $j$ associated with the shear parameter $k =-2^j$ is
\[
\ell = 2^{-j-1} N(-2^j-1) + t_2, \quad t_2 = 2^{-j} \tfrac{N}{2}, \ldots, 2^{-j} N,
\]
and for $k =2^j$ it is
\beq \label{eq:defi_ell_U}
\ell = 2^{-j-1} N(2^j-1) + t_2, \quad t_2 = 0, \ldots, 2^{-j} \tfrac{N}{2}.
\eeq
For the case  $j<0$, we simply let $k=0$ and $\ell=-N/2+t_2$ with $t_2=0,\ldots,N$. Also, for this lower frequency
case, the window function $W(4^{-j}\ox)V(k+2^j\tfrac{\oy}{\ox})$ is slightly modified to be $W(4^{-j}\ox)V_0(k+2^j\tfrac{\oy}{\ox})$.

These computations now allow us to determine the support size of the function $W(4^{-j}\ox) V(k+2^j\frac{\oy}{\ox})$ in terms
of pairs $(n,\ell)$, which for scale $j$ and shear $k$, is
\beq \label{eq:L1}
\cL^1_{j} =\left\{ \begin{array}{cll}
4^{j+1}\tfrac{R}{2} &:& j= j_L,\\
4^{j-1} \cdot \frac{15 R}{2} + 1 & : & j_L<j<j_H,\\
\tfrac{RN}{2}-4^{j-1}\tfrac{R}{2}+1 & : & j=j_H,
 \end{array} \right.
\eeq
and
\beq \label{eq:L2}
\cL^2_{j,k} =\left\{ \begin{array}{cll}
2^{-j} N + 1 & : & -2^j < k < 2^j\;\mbox{ with }\;j\ge0,\\
2^{-j} \frac{N}{2}+1 & : & k \in \{-2^j,2^j\}\;\mbox{ with }\; j\ge0,\\
N+1 & : & j<0.
 \end{array} \right.
\eeq

\subsubsection{Digitization of the Exponential Term}
\label{subsubsec:exponential}

We next digitize the exponential term in \eqref{eq:prototype}, which can be rewritten as
\[
e^{-2\pi i \ip{m}{S_k^T A_{4^{-j}} \omega}}
= e^{-2\pi i \ip{m}{(4^{-j}\ox,4^{-j}k\ox + 2^{-j} \oy)}}
= e^{-2\pi i \ip{m}{(4^{-j}\frac{2n}{R},4^{-j}k\frac{2n}{R} - 2^{-j} \frac{4\ell n}{RN})}}.
\]
We observe two obstacles:
\bitem
\item The change of variables $\tau := S_k^T A_{4^{-j}} \omega$ possible in \eqref{eq:idealshearlet} can not be
performed similarly in this situation due to the fact that the pseudo-polar grid is {\em not} invariant under the action of
$S_k^T A_{4^{-j}}$. This is however the first step in the continuum domain reasoning for tightness; see Chapter
\cite{Introduction}.
\item The Fourier transform of a function defined on the pseudo-polar grid does {\em not} satisfy any
Plancherel theorem.
\eitem
These problems require a slight adjustment of the exponential term, which will be the only adaption we
allow us to make when digitizing. This will circumvent the two obstacles and enable us to construct
a Parseval frame as well as derive a direct application of the inverse Fast Fourier transform in FDST.

The adjustment will be made by using the mapping $\theta : \bR\setminus\{0\} \to \bR$ defined by $\theta(x,y) = (x,\tfrac{y}{x})$.
This yields the modified exponential term
\begin{equation}\label{eq:expTerm}
e^{-2\pi i \ip{m}{(\theta \circ (S_k^T)^{-1})(4^{-j}\frac{2n}{R},4^{-j}k\frac{2n}{R} - 2^{-j} \frac{4\ell n}{RN})}}
= e^{-2\pi i \ip{m}{(4^{-j}\frac{2n}{R},-2^{j+1}\frac{\ell}{N})}},
\end{equation}
which can be rewritten as
\[
e^{-2\pi i \ip{m}{(4^{-j}\frac{2n}{R},-2^{j+1}\frac{\ell}{N})}}
= e^{-2\pi i (\frac{m_1}{4}+(1-k)m_2)} e^{-2\pi  i \ip{m}{(4^{-j}\frac{2t_1}{R},-2^{j+1}\frac{t_2}{N})}},
\]
with $t_1$ and $t_2$ ranging over an appropriate set defined by \eqref{eq:defi_k}, \eqref{eq:defi_k_H_L},
and \eqref{eq:defi_ell}--\eqref{eq:defi_ell_U}. Fig.~\ref{fig:adjustmentexp} illustrates this adjustment.
\begin{figure}[ht]
\begin{center}
\includegraphics[height=1.0in]{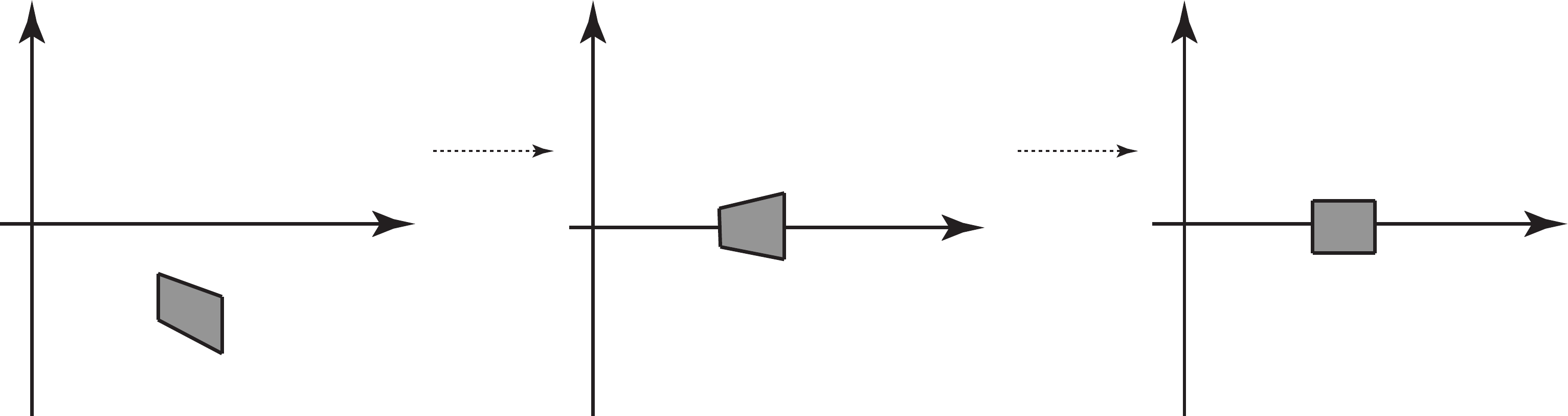}
\put(-238,60){$(S_k^T)^{-1}$}
\put(-105,60){$\theta$}
\end{center}
\caption{Adjustment of the exponential term through the map $\theta \circ (S_k^T)^{-1}$. }
\label{fig:adjustmentexp}
\end{figure}

Now, taking into account of the support size of each $W(4^{-j}\ox)V(k+2^j\frac{\oy}{\ox})$ as given in \eqref{eq:L1} and \eqref{eq:L2}, we
obtain the following reformulation of \eqref{eq:expTerm}:
\begin{equation}\label{eq:exp}
\exp\left\{-2\pi i \ip{m}{\left(\tfrac{\cL^1_j 4^{-j}(2/R)}{\cL^1_j}t_1,\tfrac{-\cL^2_{j,k}2^{j+1}(1/N)}{\cL^2_{j,k}}t_2\right)}\right\}, \quad t_1, \; t_2.
\end{equation}
This version shows that we might regard the exponential terms as characters of a suitable locally compact abelian group (see \cite{HR63})
with associated annihilator identified with the rectangle
\[
\cR_{j,k} = \left\{\left(\frac{4^{j}\tfrac{R}{2} \cdot r_1}{\cL^1_j}, -\frac{\tfrac{N}{2^{j+1}} \cdot r_2}{\cL^2_{j,k}}\right) : r_1 = 0 ,\ldots, \cL^1_j-1,\:
r_2 = 0, \ldots, \cL^2_{j,k} -1\right\},
\]
where $\cL^1_j$ and $\cL^2_{j,k}$ were defined in \eqref{eq:L1} and \eqref{eq:L2}, respectively. This viewpoint will be crucial to
guarantee that the digital shearlet system defined in Subsection \ref{subsubsec:digital} provides a Parseval frame on the pseudo-polar
grid $\Omega_R$. In practice, \eqref{eq:exp} also ensures that in Step (S3) on each windowed image on the pseudo-polar grid only a 2D-iFFT
-- in contrast to a fractional Fourier transform -- needs to be performed, thereby reducing the computational complexity.

For the low frequency square, we further require the set
\[
\cR = \{(r_1, r_2) : r_1 = -1 ,\ldots, 1,\: r_2 = -\tfrac{N}{2}, \ldots, \tfrac{N}{2}\},
\]
which will be shown to be sufficient for guaranteeing that digital shearlet system forms a Parseval frame.

\subsubsection{Digital Shearlets}
\label{subsubsec:digital}

We are now ready to define digital shearlets, which we define as functions on the pseudo-polar grid $\Omega_R$. The
spatial domain picture can thus be derived by the inverse pseudo-polar Fourier transform.

\begin{definition} \label{defi:digitalshearlets}
Retaining the definitions and notations from Subsection \ref{subsec:DSHOnPPGrid}, for all $(\ox, \oy) \in \Omega_R^{21}$,
we define {\em digital shearlets} at scale $j \in  \{j_L, \ldots, j_H\}$, shear $k=[-2^j, 2^j]\cap\bZ$, and
 spatial position $m \in \cR_{j,k}$ by
\begin{eqnarray*}
\sigma_{j,k,m}^{21}(\ox, \oy)
 =   \tfrac{C(\ox, \oy) }{\sqrt{|\cR_{j,k}|}} \, W(4^{-j} \ox) \, V^j(k+2^{j}\tfrac{\oy}{\ox})\chi_{\Omega_R^{21}}(\ox, \oy)\,
e^{2\pi i \ip{m}{(4^{-j}\ox,2^{j}\tfrac{\oy}{\ox})}},
\end{eqnarray*}
where $V^j = V$ for $j\ge0$ and $V^j=V_0$ for $j<0$, and
\[
C(\ox, \oy) = \left\{\begin{array}{cll}
1 & : &  (\ox, \oy) \not\in \cS_R^1 \cup \cS_R^2,\\
\frac{1}{\sqrt{2}} & : & (\ox, \oy) \in (\cS_R^1 \cup \cS_R^2)\setminus \cC,\\
\frac{1}{\sqrt{2(N+1)}} & : & (\ox, \oy) \in \cC.
\end{array}\right.
\]
The shearlets $\sigma_{j,k,m}^{11}, \sigma_{j,k,m}^{12}, \sigma_{j,k,m}^{22}$ on the
remaining cones are defined accordingly by symmetry with equal indexing sets for scale $j$, shear $k$, and spatial location $m$.
For $\iota_0=1,2$, $(\ox, \oy) \in \Omega_R^{\iota_0}$, and $n_0 \in \cR$, we define the {\em scaling function}
\[
\varphi_{n_0}^{\iota_0}(\ox, \oy)  =
\tfrac{C(\ox, \oy)}{\sqrt{|\cR|}} \hat{\phi}(\ox, \oy) \chi_{\Omega_R^{\iota_0}}(\ox, \oy)\, e^{2\pi i\ip{n_0}{(\frac{n}{3},\frac{\ell}{N+1})}}.
\]
Then the {\em digital shearlet system} $DSH$ is defined by
\begin{eqnarray*}
DSH&  = & \{\varphi_{n_0}^{\iota_0} : \iota_0=1,2, n_0 \in \cR\} \cup\{\sigma_{j,k,m}^{\iota}
: j \in \{j_L, \ldots, j_H\}, k\in \{-2^j, 2^j\}, \\& &\hspace*{5.5cm}  m \in \cR_{j,s},\iota=11,12,21,22\}.
\end{eqnarray*}
\end{definition}

As desired, the digital shearlet system $DSH$, which we derived as a faithful digitization of the continuum domain
band-limited cone-adapted discrete shearlet system, forms a Parseval frame for $J : \Omega_R \to \CC$.

\begin{theorem}[\cite{KSZ11}]\label{thm:DSHtight}
The digital shearlet system $DSH$ defined in Definition \ref{defi:digitalshearlets} forms a Parseval frame for functions $J : \Omega_R \to \CC$.
\end{theorem}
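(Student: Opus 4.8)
The plan is to verify the Parseval identity directly by showing that the analysis-then-synthesis (frame) operator associated with $DSH$ equals the identity on $\ell^2(\Omega_R)$. Since $DSH$ is a union of the scaling functions supported on the center and the shearlets $\sigma_{j,k,m}^{\iota}$ supported on the four cones, I would organize the computation cone by cone and only afterwards reconcile the points shared between cones, namely the seam lines $\cS_R^1 \cup \cS_R^2$ and the center $\cC$. Two structural ingredients, both built into the construction, drive the argument: first, the windows $W(4^{-j}\cdot)$ and $V^j(\cdot)$ form a squared partition of unity, via the radial summability \eqref{eq:summabilityW} and the angular summability \eqref{eq:summabilityV2}; second, after the adjustment recorded in \eqref{eq:exp}, the exponentials are characters of a finite abelian group whose annihilator is exactly the position set $\cR_{j,k}$.

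First I would fix a single cone, say $\Omega_R^{21}$, and a single scale $j$ and shear $k$. On the support of the window $W(4^{-j}\ox)\,V^j(k+2^{j}\tfrac{\oy}{\ox})$, which is parametrized by integer pairs $(t_1,t_2)$ whose ranges have cardinalities $\cL^1_j$ and $\cL^2_{j,k}$ as recorded in \eqref{eq:L1} and \eqref{eq:L2}, the exponential factors appearing in $\sigma_{j,k,m}^{21}$ reduce by \eqref{eq:exp} to the two-dimensional discrete Fourier basis indexed by $m \in \cR_{j,k}$. Because $|\cR_{j,k}| = \cL^1_j \cdot \cL^2_{j,k}$ equals the number of support points, these exponentials are orthogonal of common norm $\sqrt{|\cR_{j,k}|}$, so the normalization $1/\sqrt{|\cR_{j,k}|}$ renders them orthonormal. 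Parseval's identity for this finite Fourier basis then gives
\begin{gather*}
\sum_{m \in \cR_{j,k}} \Big| \sum_{(\ox,\oy) \in \Omega_R^{21}} J(\ox,\oy)\, \overline{\sigma_{j,k,m}^{21}(\ox,\oy)} \Big|^2 \\
= \sum_{(\ox,\oy) \in \Omega_R^{21}} C(\ox,\oy)^2\, |W(4^{-j}\ox)|^2\, |V^j(k+2^{j}\tfrac{\oy}{\ox})|^2\, |J(\ox,\oy)|^2 ,
\end{gather*}
i.e., summing the squared coefficients over all positions $m$ recovers the energy of $J$ weighted by the squared window.

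Next I would sum this identity over all admissible $j$ and $k$ and add the analogous scaling-function contribution on the center. For each fixed point of the cone, the radial factors $|W(4^{-j}\ox)|^2$ together with the coarse term $|W_0(4^{-j_L}\ox)|^2$ telescope to $1$ by \eqref{eq:summabilityW}, while at each scale the angular factors $\sum_{k}|V^j(k+2^{j}\tfrac{\oy}{\ox})|^2$ telescope to $1$ by \eqref{eq:summabilityV2}; hence the total squared-window weight collapses to $1$ and the cone contribution becomes $\sum_{(\ox,\oy)\in\Omega_R^{21}} C(\ox,\oy)^2\,|J(\ox,\oy)|^2$. Finally, I would add the four cones and use the constants $C(\ox,\oy)$ from Definition \ref{defi:digitalshearlets} to account for multiplicity: an interior point has $C=1$ and lies in a single cone, contributing $|J(\ox,\oy)|^2$; a seam point has $C^2=\tfrac12$ and is shared by two cones, so its two copies contribute $\tfrac12+\tfrac12=1$; and the center appears $N+1$ times in each of $\Omega_R^1,\Omega_R^2$ with $C^2=\tfrac{1}{2(N+1)}$, so its $2(N+1)$ copies contribute $1$ through the scaling functions. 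In every case each point of $\Omega_R$ contributes exactly $|J(\ox,\oy)|^2$, which is precisely the right-hand side of the Parseval identity.

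The step I expect to be most delicate is the partition-of-unity reduction at the truncated scales $j_L,j_H$, at the boundary shears $k=\pm 2^j$, and in the low-frequency square handled by $V_0$ and the scaling function. The summability relations \eqref{eq:summabilityW} and \eqref{eq:summabilityV2} are continuum statements, so one must check that after restriction to the integer grid $\Omega_R$ and to the finite index ranges the half-windows at $k=\pm 2^j$ exactly fill the angular gaps forced by $V(\pm 1)=0$, that the coarsest scale $j_L$ and the central square are completed by $W_0$ and $V_0$ without overlap defect, and that at the seam the two half-windows inherited from adjacent cones combine with the weight $C^2=\tfrac12$ to restore full energy. Confirming that these boundary overlaps tile $\Omega_R$ with neither gap nor surplus --- so that the telescoping to $1$ is valid pointwise --- is where the precise support constraints \eqref{eq:supportW}, \eqref{eq:supportV} and the exact ranges \eqref{eq:L1}, \eqref{eq:L2} must be invoked in full.
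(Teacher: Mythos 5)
Your proposal is correct and follows essentially the same route as the paper's proof: exploiting that the normalized exponentials over $\cR_{j,k}$ form an orthonormal Fourier basis on each window's support (the paper's appeal to the Plancherel formula justified by the construction of $\cR_{j,k}$ in Subsection \ref{subsubsec:exponential}), then collapsing the squared windows via \eqref{eq:summabilityW} and \eqref{eq:summabilityV2}, with the constants $C(\ox,\oy)$ compensating the multiple counting of seam and center points. Your cone-by-cone organization and explicit multiplicity bookkeeping are merely a reordering of the paper's argument (which treats the scaling term and the shearlet term first, then combines cones), so no substantive difference remains.
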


\begin{proof}
Letting $J : \Omega_R \to \CC$, we claim that
\begin{equation}\label{eq:RHSLHS}
\langle J,J\rangle_{\Omega_R}=\sum_{\iota_0,n_0}|\langle J,\varphi_n^{\iota_0} \rangle_{\Omega_R}|^2
+\sum_{\iota,j,k,m}|\langle J,\sigma_{j,k,m}^{\iota} \rangle_{\Omega_R}|^2
\end{equation}
which proves the result. Here $\langle J_1, J_2\rangle_{\Omega_R}:=\sum_{(\ox,\oy)\in\Omega_R}J_1(\ox,\oy)\overline{J_2(\ox,\oy)}$ for $J_1, J_2:\Omega_R\rightarrow \bC$.

We start by analyzing the first term on the RHS of \eqref{eq:RHSLHS}. Let $\iota_0 \in \{1, 2\}$ and
$J_C:\Omega_R\rightarrow\bC$ be defined by $J_C(\ox,\oy):=C(\ox,\oy)\cdot J(\ox,\oy)$ for $(\ox,\oy)\in\Omega_R$.
Using the support conditions of $\hat\phi$,
\begin{eqnarray*}
\hspace*{-0.5cm} &&\sum_n|\langle J,\varphi_{n_0}^{\iota_0}\rangle_{\Omega_R}|^2=\sum_{n_0}\Big|\sum_{(\ox,\oy) \in \Omega_R^{\iota_0}}
J(\ox,\oy) \overline{\varphi_{n_0}^{\iota_0}(\ox,\oy)}\Big|^2\\
& = & \frac{1}{|\cR|} \sum_{n_0} \Big|\sum_{(\ox,\oy) \in \Omega_R^{\iota_0}}   J_C(\ox,\oy)\cdot
\hat{\phi}(\ox, \oy) \cdot e^{-2\pi i\ip{n_0}{(\frac{n}{3},\frac{\ell}{N+1})}}\Big|^2\\
& = & \frac{1}{|\cR|} \sum_{n_0} \Big|\sum_{n=-1}^{1} \sum_{\ell=-N/2}^{N/2}
 J_C(\ox,\oy)\cdot \hat{\phi}(\ox, \oy) \cdot e^{-2\pi i\ip{n_0}{(\frac{n}{3},\frac{\ell}{N+1})}}\Big|^2.
\end{eqnarray*}
The choice of $\cR$ now allows us to use the Plancherel formula, see Subsection \ref{subsubsec:exponential}. Exploiting again
support properties (see Subsection~\ref{subsubsec:exponential}), we conclude that
\[
\sum_n|\langle J,\varphi_n^{\iota_0}\rangle_{\Omega_R}|^2=\sum_{(\ox,\oy) \in \Omega_R^{\iota_0}} |C(\ox,\oy) \cdot J(\ox,\oy)|^2 \cdot |\hat{\phi}(\ox, \oy)|^2.
\]
Combining $\iota_0=1,2$ and using \eqref{eq:defW0}, we proved
\beq \label{eq:firstterm}
\sum_{\iota_0}\sum_{n_0}|\langle J,\varphi_{n_0}^{\iota_0}\rangle_{\Omega_R}|^2=
\sum_{(\ox,\oy) \in \Omega_R} |J(\ox,\oy)|^2 \cdot |W_0(\ox)|^2.
\eeq

Next we study the second term on the RHS in \eqref{eq:RHSLHS}. By symmetry, it suffices to consider the case $\iota=21$.
By the support conditions on $W$ and $V$ (see \eqref{eq:supportW} and \eqref{eq:supportV}),
{\allowdisplaybreaks
\begin{eqnarray*}
&&\sum_{j,k,m}|\langle J,\sigma_{j,k,m}^{21}\rangle_{\Omega_R}|^2=
\sum_{j,k} \sum_{m \in \cR_{j,k}} \Big|\sum_{(\ox,\oy) \in \Omega_R^{21}} J(\ox,\oy)
\overline{\sigma_{j,k,m}^{21}(\ox,\oy)}\Big|^2\\
& = & \sum_{j,k} \frac{1}{|\cR_{j,k}|} \sum_{m \in \cR_{j,k}} \Big|\sum_{(\ox,\oy) \in \Omega_R^{21}} J_C(\ox,\oy) \cdot \overline{W(4^{-j} \ox)}
\\ & & \hspace*{3cm} \cdot
 \overline{V^j(k+2^{j}\tfrac{\oy}{\ox})} \cdot e^{-2\pi i\ip{m}{(4^{-j}\ox,2^{j}\frac{\oy}{\ox})}}\Big|^2\\  \nonumber
& = & \sum_{j,k} \frac{1}{|\cR_{j,k}|} \sum_{m \in \cR_{j,k}} \Big|\sum_{n=4^{j-1} (R/2)}^{4^{j+1} (R/2)} \sum_{\ell = 2^{-j-1}N(k-1)}^{2^{-j-1}N(k+1)}
 J_C(\ox,\oy)  \\
& &  \hspace*{3cm} \cdot  \overline{W(4^{-j} \ox)}\cdot \overline{V^j(k+2^{j}\tfrac{\oy}{\ox})} \cdot e^{-2\pi i\ip{m}{(4^{-j}\frac{2n}{R},-2^{j+1}\frac{\ell}{N})}}\Big|^2.
\end{eqnarray*}
}
Similarly as before, the choice of $\cR_{j,k}$ does allow us to use the Plancherel formula, see Subsection \ref{subsubsec:exponential}.
Hence,
\[
\sum_{j,k,m}|\langle J,\sigma_{j,k,m}^{21}\rangle_{\Omega_R}|^2=
\sum_{j,k} \sum_{(\ox,\oy) \in \Omega_R^{21}} \Big| J_C(\ox,\oy)\cdot\overline{W(4^{-j} \ox)V^j(k+2^{j}\tfrac{\oy}{\ox})}\Big|^2.
\]
Next we use \eqref{eq:summabilityV2} to obtain
\begin{eqnarray} \nonumber
\lefteqn{\sum_{j,k} \sum_{(\ox,\oy) \in \Omega_R^{21}} \Big| J_C(\ox,\oy) \cdot\overline{W(4^{-j} \ox)} \cdot\overline{V^j(k+2^{j}\tfrac{\oy}{\ox})}\Big|^2}\\ \nonumber
& = & \hspace*{-0.25cm} \sum_{(\ox,\oy) \in \Omega_R^{21}} | J_C(\ox,\oy)|^2 \sum_{j =j_L}^{j_H}
 |W(4^{-j} \ox)|^2 \cdot  \sum_{k=-2^j}^{2^j} |V^j(k+2^{j}\tfrac{\oy}{\ox})|^2 \\ \nonumber
& = & \hspace*{-0.25cm} \sum_{(\ox,\oy) \in \Omega_R^{21}} | J_C(\ox,\oy)|^2 \sum_{j = j_L}^{j_H}
 |W(4^{-j} \ox)|^2.
\end{eqnarray}
Thus the second term on the RHS in \eqref{eq:RHSLHS} equals
\beq \label{eq:secondterm}
\sum_{\iota}\sum_{j,k,m}|\langle J,\sigma_{j,k,m}^{\iota}\rangle_{\Omega_R}|^2=\sum_{(\ox,\oy) \in \Omega_R} |J(\ox,\oy)|^2\cdot \sum_{j =j_L}^{j_H}.
 |W(4^{-j} \ox)|^2.
\eeq
Finally, our claim \eqref{eq:RHSLHS} follows from combining \eqref{eq:firstterm}, \eqref{eq:secondterm}, and \eqref{eq:summabilityW}.
\qed
\end{proof}

\subsubsection{Digital Shearlet Windowing}
\label{subsubsec:windowing}

The final Step (S3) of the FDST then consists in decomposing the data on the points of the pseudo-polar grid given by the
previously -- in Steps (S1) and (S2) -- computed weighted pseudo-polar image $J_w : \Omega_R \to \CC$ into rectangular
subband windows according to the digital shearlet system DSH defined in Definition \ref{defi:digitalshearlets}, followed
by a 2D-iFFT. More precisely, given $J_w$, the set of digital shearlet coefficients
\[
c^{\iota_0}_{n_0} := \ip{J_w}{\varphi_{n_0}^{\iota_0}}_{\Omega_R}\quad \mbox{for all }  \iota_0, n_0
\]
and
\[
c_{j,k,m}^\iota := \ip{J_w}{\sigma_{j,k,m}^{\iota}}_{\Omega_R}\quad \mbox{for all } j,k,m,\iota
\]
is computed followed by application of the 2D-iFFT to each windowed image $J_w\varphi_{0}^{\iota_0}$ and $J_w\sigma_{j,k,0}^\iota$
restricted on the support of $\varphi_0^{\iota_0}$ and $\sigma_{j,k,0}^\iota$, respectively.

The definition of the digital shearlet system DSH in Definition \ref{defi:digitalshearlets} requires appropriate choices
of the functions $\phi$, $V_0$, $V$, $W_0$, and $W$, and the required conditions are stated throughout Subsection
\ref{subsec:subbandwindows}. We now discuss one particular choice, which is chosen in \url{ShearLab}. We start selecting the
 `wavelets' $W_0$ and $W$.  In Subsection \ref{subsec:subbandwindows}, these functions were defined to be
Fourier transforms of the Meyer scaling function and wavelet function, respectively, i.e.,
\[
W_0(\xi)=\left\{
\begin{array}{lcl}
1& : & |\xi|\leq\frac{1}{4},\\
\cos\left[\frac{\pi}{2}\nu(\frac{4}{3}|\xi|-\frac13)\right]&:& \frac{1}{4}\le|\xi|\le 1,\\
0 &:& \mbox{otherwise},
\end{array}\right.
\]
and
\[
W(\xi)=
\left\{\begin{array}{lcl}
\sin\left[\frac{\pi}{2}\nu(\frac{4}{3}|\xi|-\frac13)\right]&:& \frac{1}{4}\le|\xi|\leq 1,\\
\cos\left[\frac{\pi}{2}\nu(\frac{1}{3}|\xi|-\frac13)\right]&:& 1\leq|\xi|\le 4,\\
0 &:& \mbox{otherwise},
\end{array}\right.
\]
where $\nu\ge0$ is a $C^k$ function or $C^\infty$ function such that $\nu(x)+\nu(1-x)=1$ for $0\le x\le1$.
One possible choice for $\nu$ is the function $\nu(x) = x^4(35-84x+70x^2-20x^3)$, $0\le x\le 1$, which
then automatically fixes $W_0$ and $W$. Since $|W_0(\xi)|^2+|W(\xi)|^2=1$ for $|\xi|\le 1$, the required
condition \eqref{eq:summabilityW} is satisfied. The graphs of this choice of functions $W_0$, $W$, and $\nu$
are illustrated in Fig.~\ref{fig:Nu-W-W0}.
\begin{figure}[ht]
\begin{center}
\includegraphics[height=1.2in]{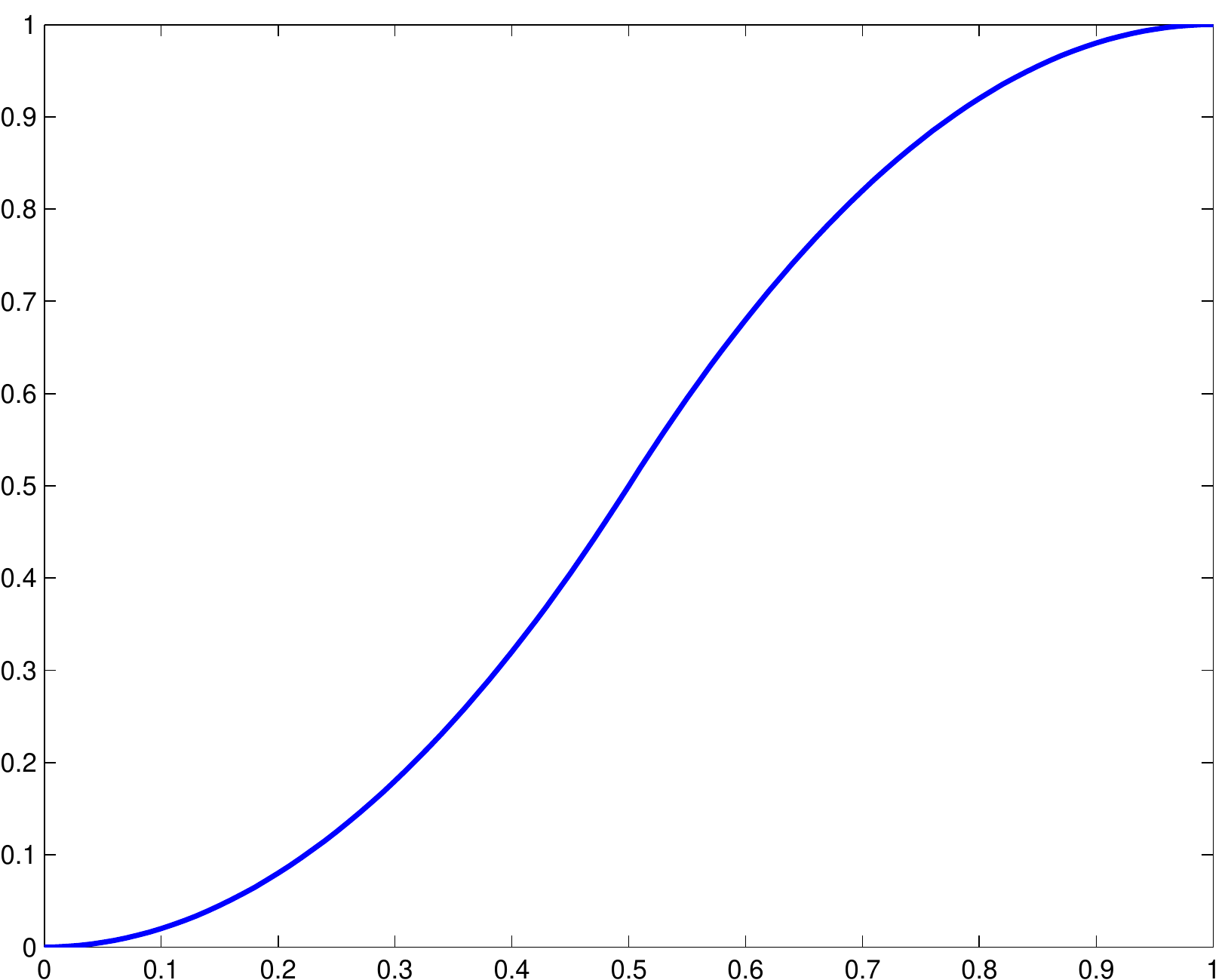}
\includegraphics[height=1.2in]{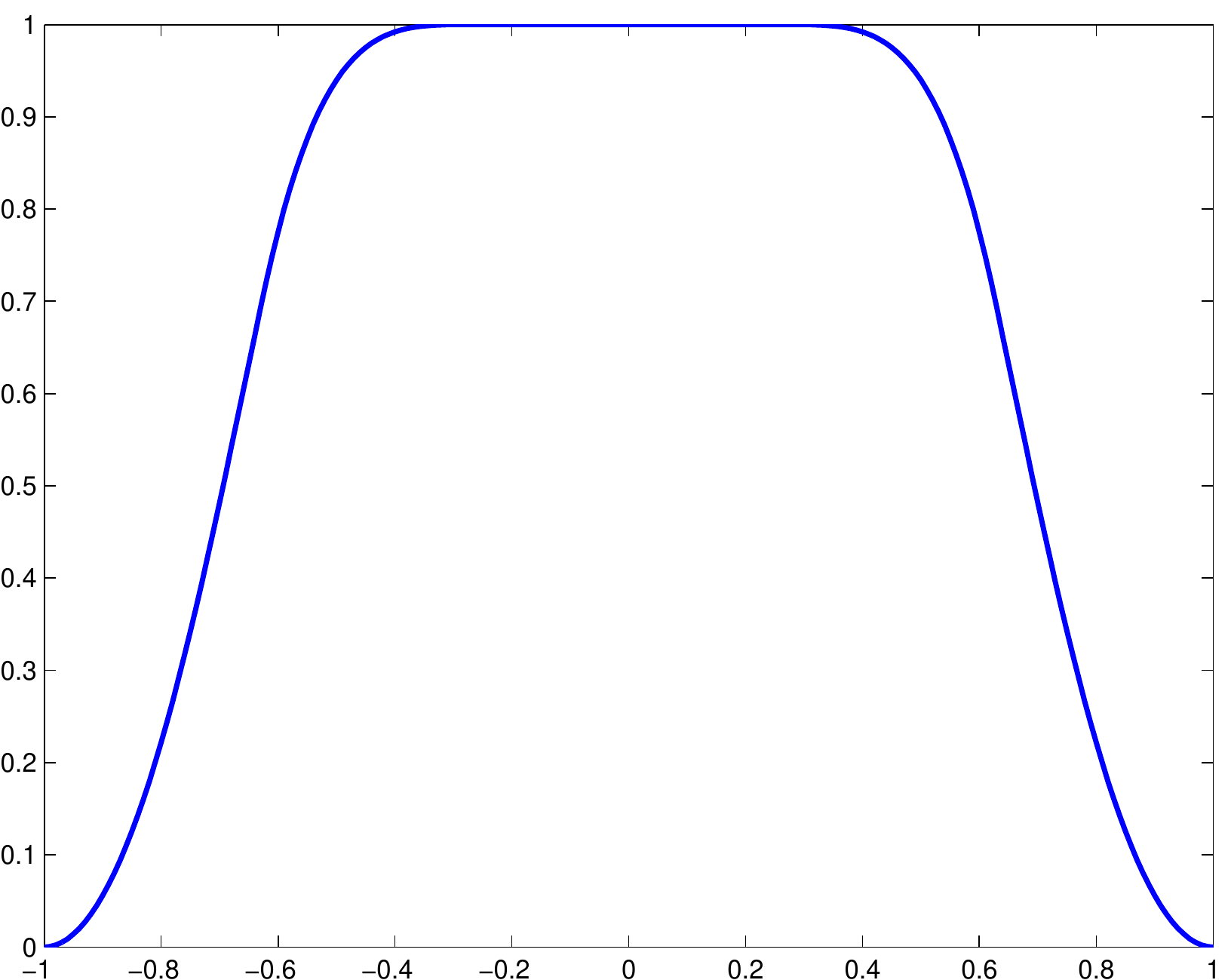}
\includegraphics[height=1.2in]{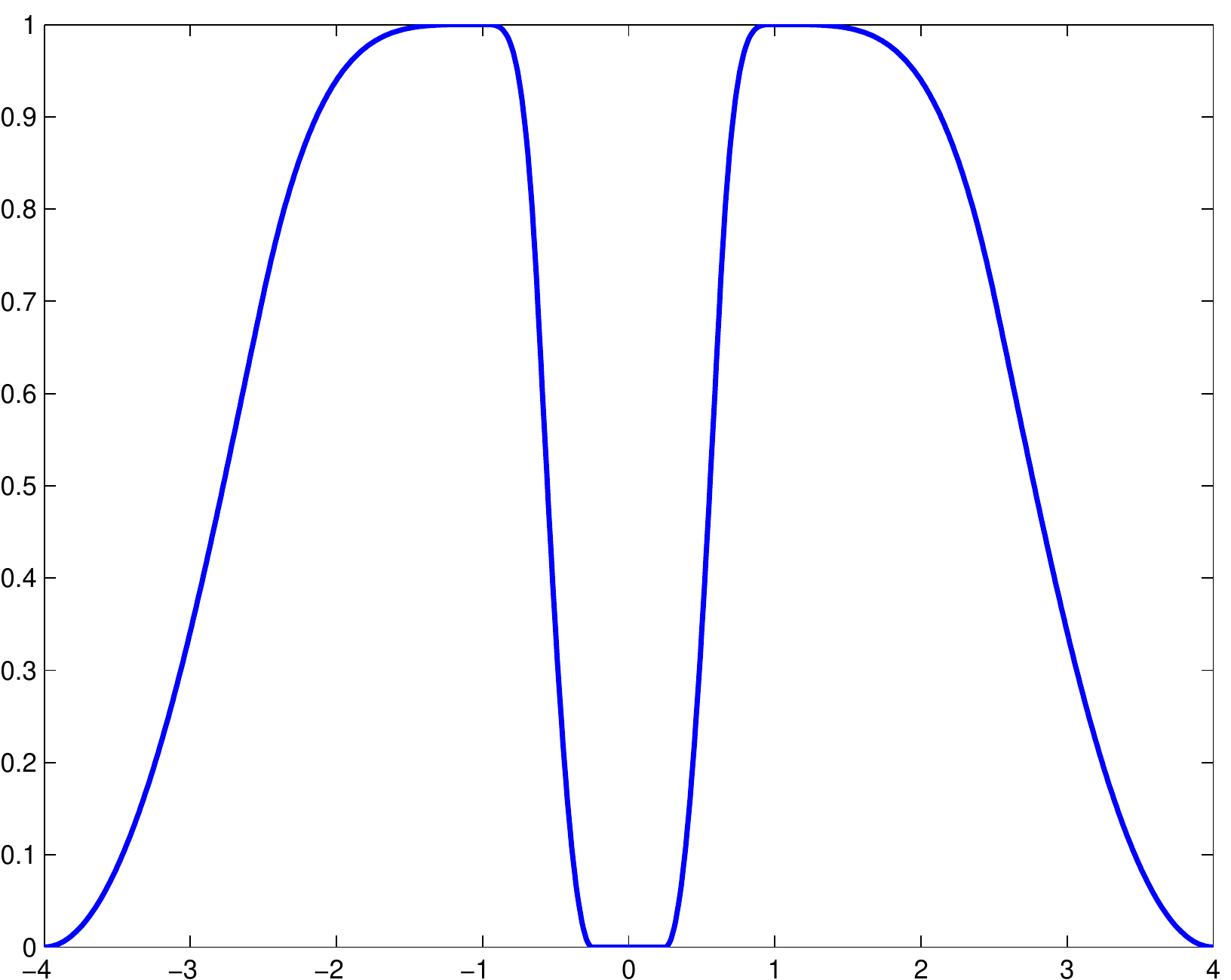}
\put(-277,-14){$\nu$}\put(-167,-14){$W_0$}\put(-57,-14){$W$}
\end{center}
\caption{The graphs of $\nu$, $W_0$, and $w$.}
\label{fig:Nu-W-W0}
\end{figure}

The function $\nu$  can be also used to design the `bump' function $V$ as well, which needs to satisfy
\eqref{eq:summabilityV1}. One possible choice for $V$ is to define it by
$V(\xi)=\sqrt{\nu(1+\xi)+\nu(1-\xi)}$, $-1\le \xi\le 1$. $V_0$ can then simply be chosen as $V_0\equiv 1$.

Let us finally mention that $\phi$ is defined depending on $V_0$ and $W_0$, wherefore fixing these two
functions determines $\phi$ uniquely.



\subsection{Algorithmic Realization of the FDST}
\label{subsec:FDST}

We have previously discussed all main ingredients of the fast digital shearlet transform (FDST) -- Fast PPFT, Weighting, and Digital Shearlet
Windowing --, and will now summarize those findings. Depending on the application at hand, a fast inverse transform is required, which
we will also detail in the sequel. In fact, we will present two possibilities: the Adjoint FDST and the Inverse FDST depending on whether
the weighting allows to use the adjoint for reconstruction or whether an iterative procedure is required for higher accuracy. Fig.~\ref{fig:flowcharts}
provides an overview of the main steps of of the FDST and its inverse. For
a more detailed description of FDST, Adjoint FDST, and Inverse FDST in form of pseudo-code, we refer to \cite{KSZ11}.
\begin{figure}[ht]
\begin{center}
\includegraphics[height=2.1in]{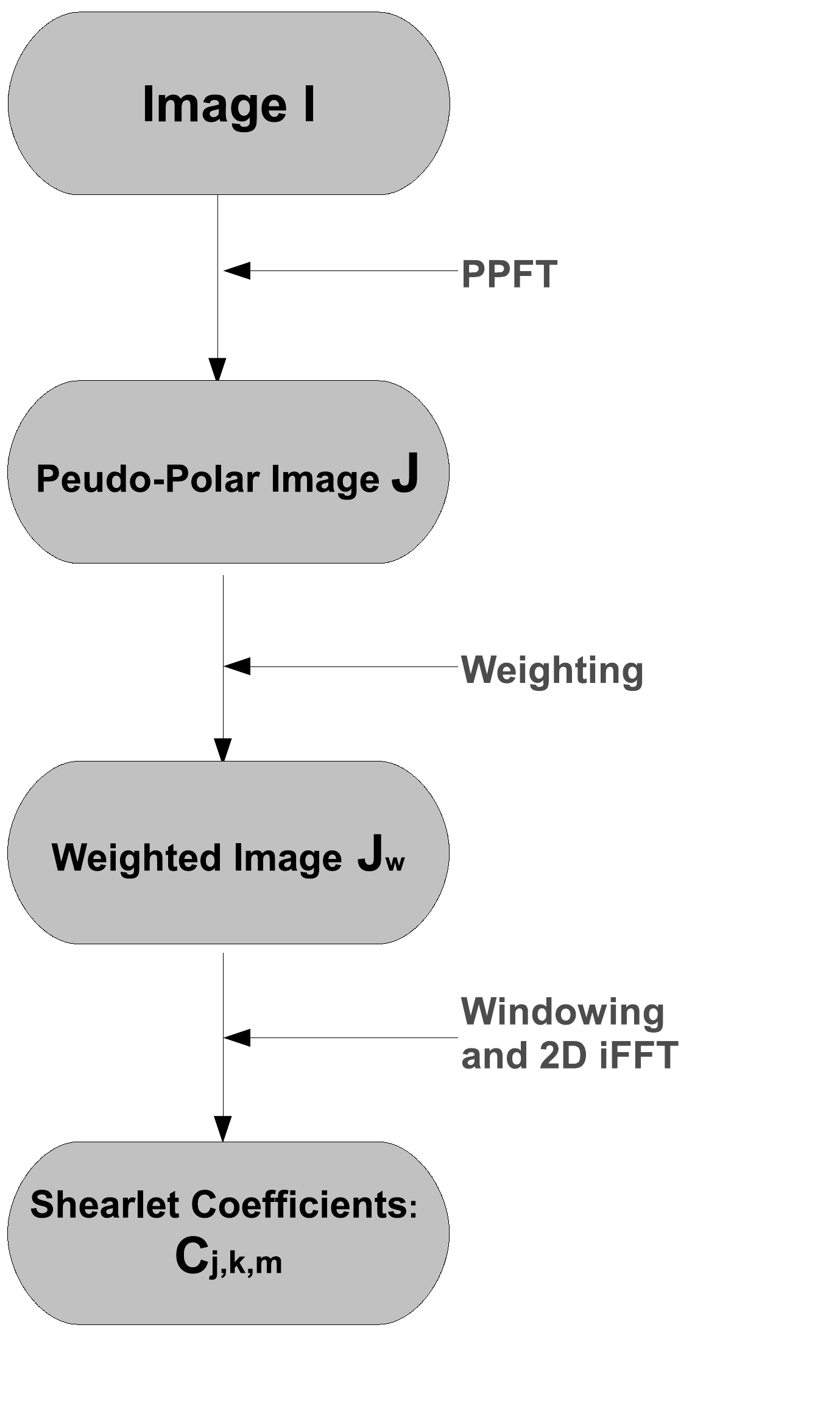}
\includegraphics[height=2.1in]{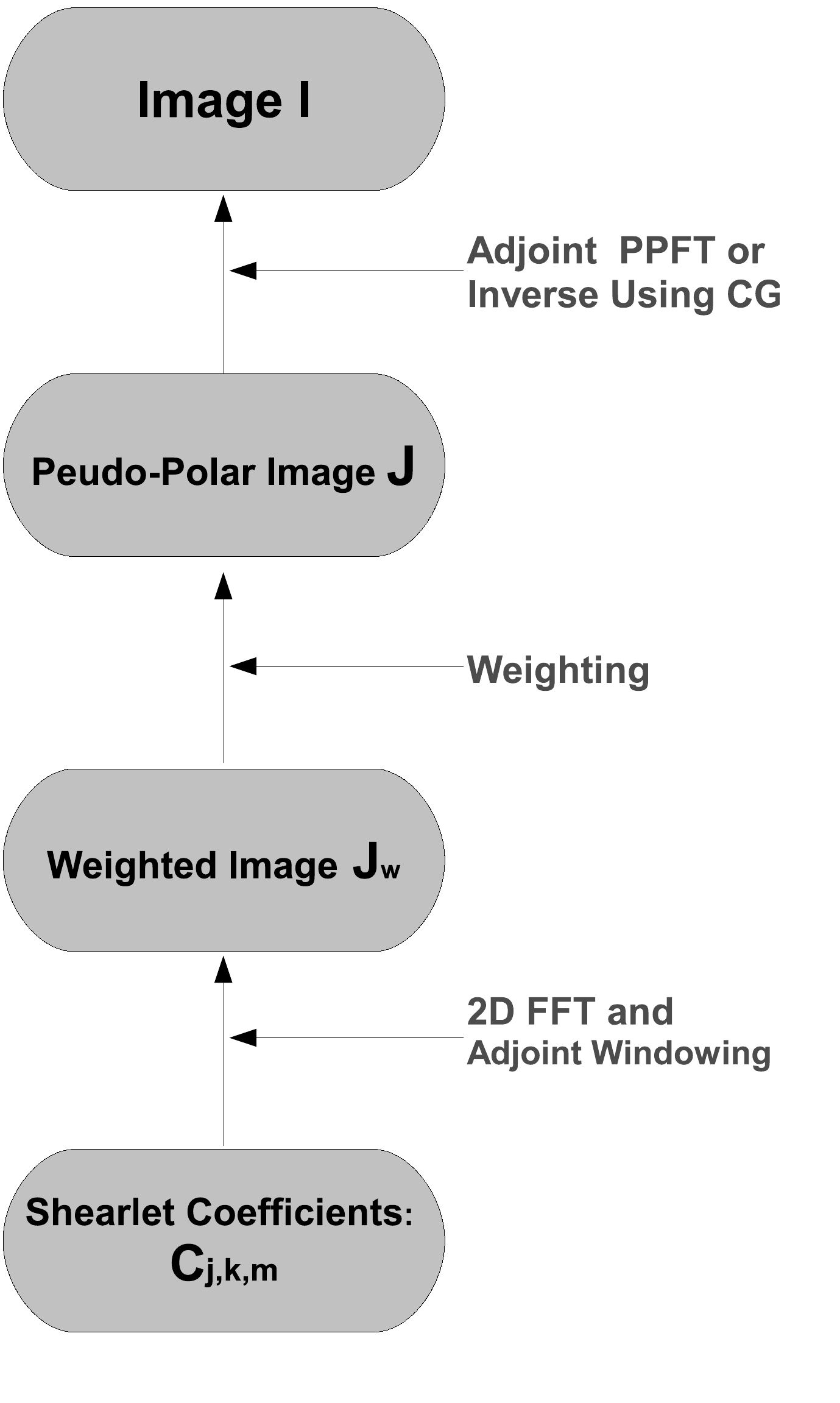}
\end{center}
\vspace{-0.5cm}
\caption{Flowcharts of the FDST (left) and its inverse (right).}
\label{fig:flowcharts}
\end{figure}

For the sake of brevity, we now let $P$, $w$, and $W$ denote the Fast PPFT from Subsection \ref{subsubsec:forward}, the weighting
on the pseudo-polar grid described in Subsection \ref{subsubsec:weighting}, and windowing operator consisting of the application of
the shearlet windows followed by 2D-iFFT to each array as detailed in Subsection \ref{subsubsec:windowing}, respectively.

\subsubsection{FDST}

We can summarize the steps of the algorithm FDST as follows:

\begin{itemize}
\item {\bf Step (S1):} For a given image $I$, apply the Fast PPFT as described in Subsection \ref{subsubsec:forward}
to obtain the function $P I : \Omega_R \to \C$.
\item {\bf Step (S2):} Apply the square root of an off-line computed weight function $w : \Omega_R \to \C$ to $P I$ as described
in Subsection \ref{subsubsec:weighting}, yielding $\sqrt{w} P I : \Omega_R \to \C$.
\item {\bf Step (S3):} Apply the shearlet windows to the function $w P I$, followed by a 2D-iFFT to each array to obtain the
shearlet coefficients $W \sqrt{w} P I$, which we denote by $c^{\iota_0}_{n_0}$, $\iota_0, n_0$ and $c_{j,k,m}^\iota$, $j,k,m,\iota$.
\end{itemize}

\subsubsection{Adjoint FDST}
\label{subsubsec:adjoint}

Assuming that the weight function $w$ used in Step (S2) satisfies the condition in Theorem \ref{thm:weight}, and using the Parseval
frame property of the digital shearlet system (Theorem \ref{thm:DSHtight}), we obtain
\[
(W\sqrt{w}P)^\star W\sqrt{w}P = P^\star\sqrt{w}(W^\star W)\sqrt{w}P = P^\star wP = Id.
\]
Hence in this case, the FDST, which is abbreviated by $W\sqrt{w}P$ can be inverted by applying the adjoint FDST, which
cascades the following steps:
\begin{itemize}
\item {\bf Step 1:} For given shearlet coefficients $C$, i.e., $c^{\iota_0}_{n_0}$, $\iota_0, n_0$ and $c_{j,k,m}^\iota$, $j,k,m,\iota$,
compute the linear combination of the shearlet windows with coefficients $c^{\iota_0}_{n_0}$, $\iota_0, n_0$ and $c_{j,k,m}^\iota$, $j,k,m,\iota$.
This gives the function $W^\star C : \Omega_R \to \C$.
\item {\bf Step 2:} Apply the square root of an off-line computed weight function $w : \Omega_R \to \C$ to $W^\star C$, yielding the
function $\sqrt{w} W^\star C : \Omega_R \to \C$.
\item {\bf Step 3:} Apply the Fast Adjoint PPFT by running the Fast PPFT `backwards'. For this, we just notice that the adjoint
fractional Fourier transform of a vector $c\in\bC^{N+1}$ with respect to a constant $\alpha\in\bC$ is  given by $F_{N+1}^{-\alpha}c$.
Also, for $m>N$, the adjoint padding operator $E_{m,N}^\star$ applied to a vector $c\in\bC^{m}$ is given by $(E^\star_{m,N}c)(k) = c(k)$,
$k=-N/2,\ldots, N/2-1$. The Adjoint PPFT gives an image $ P^\star \sqrt{w} W^\star C$.
\end{itemize}

\subsubsection{Inverse FDST}

Normally -- as also with the relaxed form of weights debated in Subsection \ref{subsubsec:relaxed} -- the weights will not
satisfy the conditions of Theorem \ref{thm:weight} precisely. A measure for whether application of the adjoint is still
feasible was already discussed in Subsection \ref{subsubsec:comparison} (see also Subsection \ref{subsec:isometry}).
If higher accuracy of the reconstruction is required, one might use iterative methods, such as conjugate gradient methods.
Since the digital shearlet system forms a Parseval frame, we always have
\[
W^\star W\sqrt{w}P = \sqrt{w}P.
\]
Hence, iterative methods need to be `only' applied to reconstruct an image $I$ from knowledge of $J := \sqrt{w}P I$, i.e., to solve
the equation
\[
P^\star w P I = P^\star w J
\]
for $I$. Since $J$ might not be in the range of $P$, $I$ is typically computed by solving the weighted least square problem
$\min_{I}\|\sqrt{w}P I-\sqrt{w}J\|_2$. Since the matrix corresponding to $P^\star P$ is symmetric positive definite, iterative
methods such as the conjugate gradient method are applicable. The conjugate gradient method is then applied to the equation
$A x = b$ with $A = P^\star w P$ and $b = P^\star w J$. Its performance can be measured by the condition number of the operator
$P^\star w P$: $cond(P^\star w P) =\lambda_{max}(P^\star w P)/\lambda_{min}(P^\star w P)$, and it turns out that
the weight function serves as a pre-conditioner. We remark that this measure is more closely studied in Subsection
\ref{subsec:isometry}.

To illustrate the behavior of the weights with respect to this measure, in Table \ref{tab:12} we compute $cond(P^\star w P)$ for
the weight functions arising from Choices 1 and 2 (cf. Subsection \ref{subsubsec:relaxed}) with oversampling rate $R=8$. Notice
that the condition numbers of $P^\star w P$ are generally smaller than $2$.

\begin{table}[ht]
\caption{Comparison of Choices 1 and 2 based on the performance measure $cond(P^\star w P)$.}
\label{tab:12}
\begin{tabular}{p{2cm}p{1.8cm}p{1.8cm}p{1.8cm}p{1.8cm}p{1.8cm}p{2.0cm}}
\hline\noalign{\smallskip}
$N$ & 32 & 64 & 128 & 256 & 512 \\
\noalign{\smallskip}\svhline\noalign{\smallskip}
Choice 1 & 1.379 & 1.503 & 1.621 & 1.731 & 1.833 \\
Choice 2 & 1.760 & 1.887 & 2.001 & 2.104 & N/A \\
\noalign{\smallskip}\hline\noalign{\smallskip}
\end{tabular}
\end{table}


\section{Digital Shearlet Transform using Compactly Supported Shearlets}
\label{sec:dst}

In this section, we will discuss two implementation strategies for computing shearlet coefficients associated with a cone-adapted
discrete shearlet system now based on {\em compactly supported} shearlets, as introduced in Chapter \cite{Introduction}. Again,
one main focus will be on deriving a digitization which is faithful to the continuum setting.

Recall that in the context of wavelet theory, faithful digitization is achieved by the concept of multiresolution analysis, where
scaling and translation are digitized by discrete operations: Downsampling, upsampling and convolution. In the case of directional
transforms however, {\em three} types of operators: Scaling, translation and direction, need to be digitized. In this section,
we will pay particular attention to deriving a framework in which each of the three operators is faithfully interpreted as a
digitized operation in digital domain. Both approaches will be based on the following digitization strategies:
\begin{itemize}
\item Scaling and translation: A multiresolution analysis associated with anisotropic scaling $A_{2^j}$ can be applied for each shear
parameter $k$.
\item Directionality:
A faithful digitization of shear operator $S_{2^{-j/2}k}$ has to be achieved with particular care.
\end{itemize}
After stating and discussing the two main obstacles we are facing when considering compactly supported shearlets in Subsection
\ref{subsec:problems}, we present the digital separable shearlet transform (DSST), which is associated with a shearlet system
generated by a separable function  alongside with discussions on its properties, e.g., its redundancy; see Subsection \ref{subsec:dst_dst}.
Subsection \ref{subsec:DNST} then presents the digital non-separable shearlet transform (DNST), whose shearlet elements are generated by non-
separable shearlet generator.

\subsection{Problems with Digitization of Compactly Supported Shearlets}
\label{subsec:problems}

Compactly supported shearlets have several advantages, and we exemplarily mention superior spatial localization and
simplified boundary adaptation. However, we have to face the following two problems:
\setitemindent{0000}
\begin{enumerate}
\item[(P1)] Compactly supported shearlets do not form a tight frame, which prevents utilization of the adjoint as inverse transform.
\item[(P2)] There does not exist a natural hierarchical structure, mainly due to the application of a shear matrix, which -- unlike for
the wavelet transform -- does not allow a multiresolution analysis without destroying a faithful adaption of the continuum setting.
\end{enumerate}
\setitemindent{00}

Let us now comment on these two obstacles, before delving into the details of the implementation in Subsection \ref{subsec:dst_dst}.

\subsubsection{Tightness}
\label{subsubsec:tight}

Let us first comment on the problem of non-tightness. Letting $(\sigma_i)_{i \in I}$ denote a frame  for $L^2(\R^2)$ -- for example,
a shearlet frame --, each function $f \in L^2(\R^2)$ can be reconstructed from its frame coefficients $( \langle f,\sigma_i\rangle)_{i \in I}$
by
\[
f = \sum_{i \in I} \langle f,\sigma_i\rangle S^{-1} (\sigma_i),
\]
where $S = \sum_{i \in I} \langle \cdot ,\sigma_i\rangle \sigma_i$ is the associated frame operator on $L^2(\R^2)$, see
Chapter \cite{Introduction}. However, in case that $(\sigma_i)_{i \in I}$ does not form a {\em tight} frame, it is
in general difficult to explicitly compute the dual frame elements $S^{-1}(\sigma_i)$.

Nevertheless, it is well known that the inverse frame operator $S^{-1}$ can be effectively approximated using iterative schemes such
as the Conjugate Gradient method provided that the frame $(\sigma_i)_{i \in I}$ has 'good' frame bounds in the sense of their
ratio being `close'
to $1$, see also \cite{M099}. Therefore, now focussing on the situation of shearlet frames, we may argue that input data $f$ can
be efficiently reconstructed from its shearlet coefficients, if we use a compactly supported shearlet frame with 'good' frame bounds.
In fact, the theoretical frame bounds of compactly supported shearlet frames have been theoretically estimated as well as numerically
computed in \cite{KKL2010}. These results were derived for the class of 2D separable shearlet generators $\psi$ already described in Chapter
\cite{Introduction}, which we briefly recall for the convenience of the reader:

For positive integers $K$ and $L$ fixed, let the 1D lowpass filter $m_0$ be defined by
$$
|m_0(\xi_1)|^2 = (\cos(\pi\xi_1))^{2K}\sum_{n=0}^{L-1} {K-1+n\choose
  n}(\sin(\pi\xi_1))^{2n},
$$
for $\xi_1 \in \R$. Further, define the associated bandpass filter $m_1$ by
$$
|m_1(\xi_1)|^2 = |m_0(\xi_1+1/2)|^2, \quad \xi_1 \in \R,
$$
and the 1D scaling function $\phi_1$ by
$$
\hat{\phi_1}(\xi_1) = \prod_{j=0}^{\infty} m_0(2^{-j}\xi_1), \quad \xi_1 \in \R.
$$
Using the filter $m_1$ and scaling function $\phi_1$, we now define the 2D scaling function $\phi$ and separable shearlet generator $\psi$ by
\beq \label{eq:csGenerator}
\hat \phi(\xi_1,\xi_2) = \hat \phi_1(\xi_1)\hat \phi_1(\xi_2) \quad \text{and} \quad \hat \psi(\xi_1,\xi_2) = m_1(4\xi_1)\hat \phi_1(\xi_1)\hat \phi_1(2\xi_2).
\eeq
In \cite{KKL2010}, it was shown that compactly supported shearlets $\psi_{j,k,m}$ generated by the shearlet
generator $\psi$ form a frame for $L^2(C)^\vee$ with appropriately chosen parameters $K$ and $L$, where
$$
C = \{\xi \in \R^2 : |\xi_2/\xi_1| \leq 1, \,\, |\xi_1| \ge 1 \}.
$$
This construction is directly extended to construct a cone-adapted discrete shearlet frame for $L^2(\R^2)$  (cf. also Chapters \cite{Introduction}
and \cite{SparseApproximation}).

Table \ref{table:cs} provides some numerically estimated frame bounds in $L^2(C)$ for certain choice of $K$ and $L$. It shows that indeed the
ratio of the frame bounds of this class of compactly supported shearlet frames is sufficient small for utilizing an iterative scheme for efficient
reconstruction; in this sense the frame bounds are 'good'.
\begin{table}[h]
\caption{Numerically estimated frame bounds for various choices of the parameters $K$ and $L$.
$c_1$ and $c_2$ are the sampling constants in the sampling matrix $M_c$ for translation (see Chapter \cite{Introduction}).}
\label{table:cs}
\begin{tabular}{p{2cm}p{2.25cm}p{2.25cm}p{2.25cm}p{2.25cm}}
\hline\noalign{\smallskip}
$K$ & $L$ & $c_1$ & $c_2$ & B/A \\
\noalign{\smallskip}\svhline\noalign{\smallskip}
39 & 19 & 0.90 & 0.15  &  4.1084 \\
39 & 19 & 0.90 & 0.20  &  4.1085 \\
39 & 19 & 0.90 & 0.25 &   4.1104 \\
39 & 19 & 0.90 & 0.30 &   4.1328 \\
39 & 19 & 0.90 & 0.40  &  5.2495 \\
\noalign{\smallskip}\hline\noalign{\smallskip}
\end{tabular}
\end{table}

The frequency covering by compactly supported shearlets $\psi_{j,k,m}$,
\[
|\hat \phi(\xi)|^2+\sum_{j \ge 0}\sum_{k \in K_j} |\hat \psi(S^T_kA_{2^j}\xi)|^2 +|\hat{\tilde{\psi}}(\tilde{S}^T_k\tilde{A}_{2^j}\xi)|^2,
\]
is closely related to the ratio of frames bounds and, in particular, which areas in frequency domain cause a larger ratio.
This function is illustrated in Fig. \ref{fig:tile}, which shows that its upper and lower bounds are as expected well controlled.
\begin{figure}[h]
\begin{center}
\includegraphics[height=1.2in]{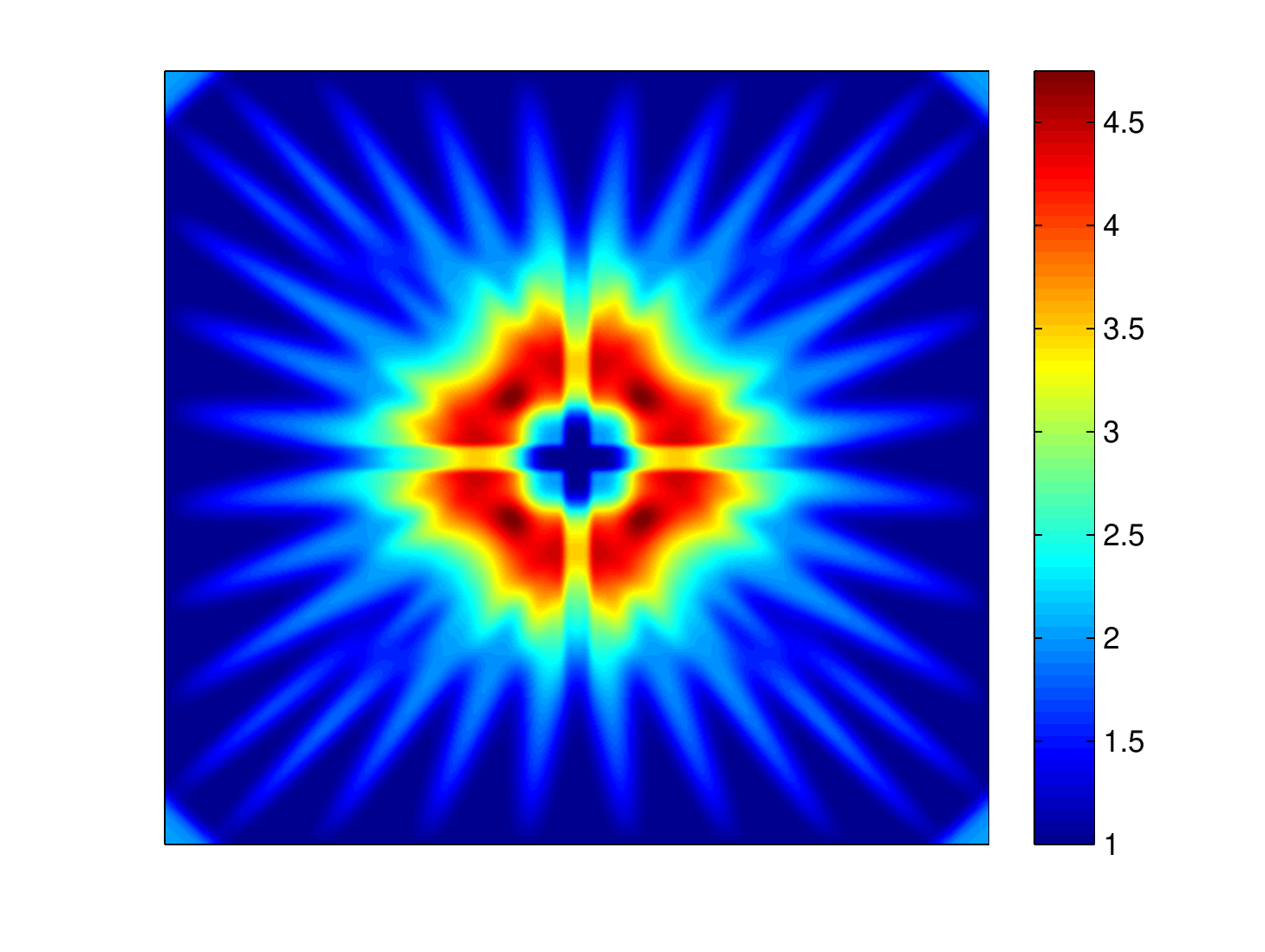}
\includegraphics[width=1.3in,height=1.2in]{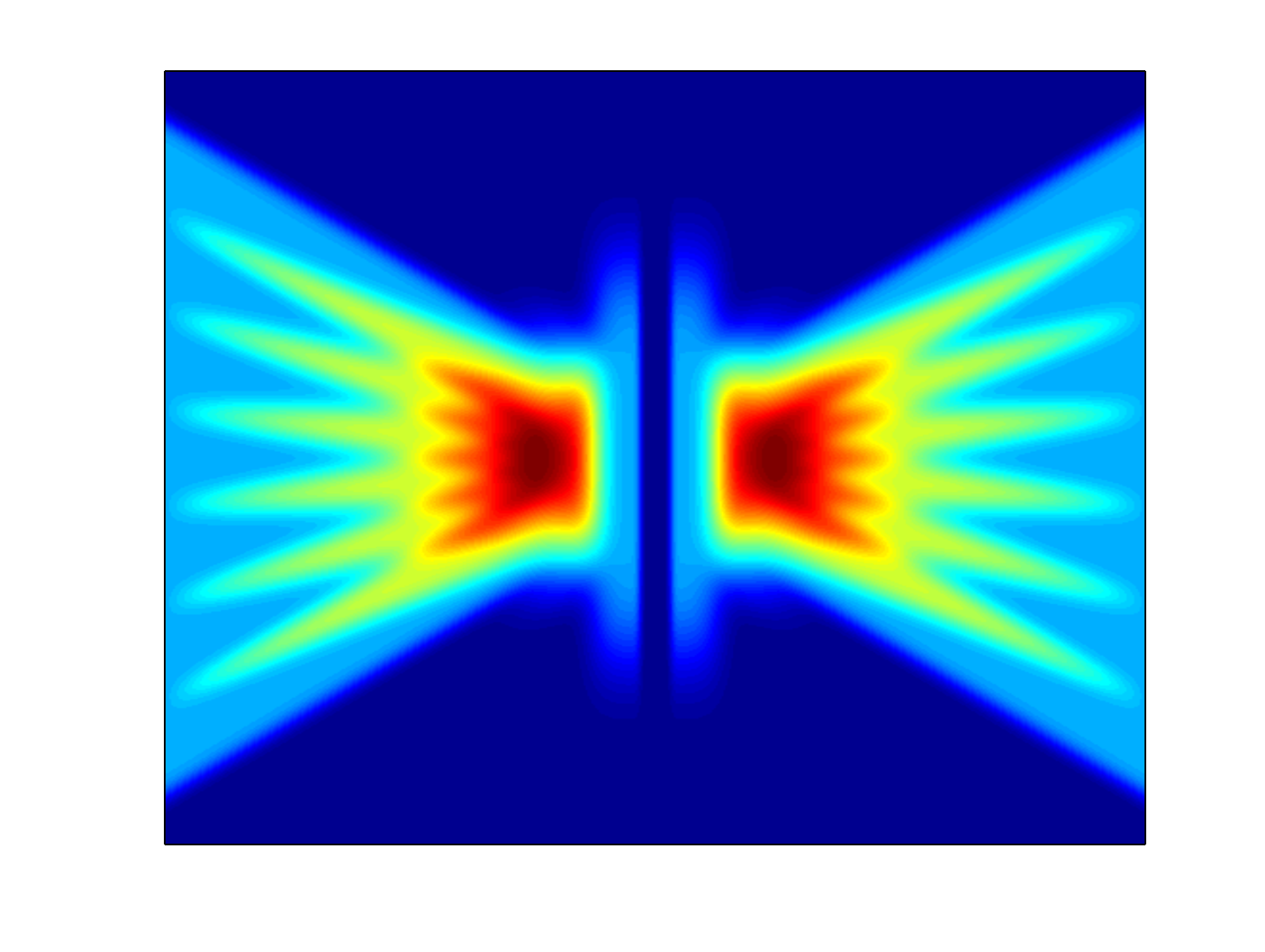}
\includegraphics[width=1.3in,height=1.2in]{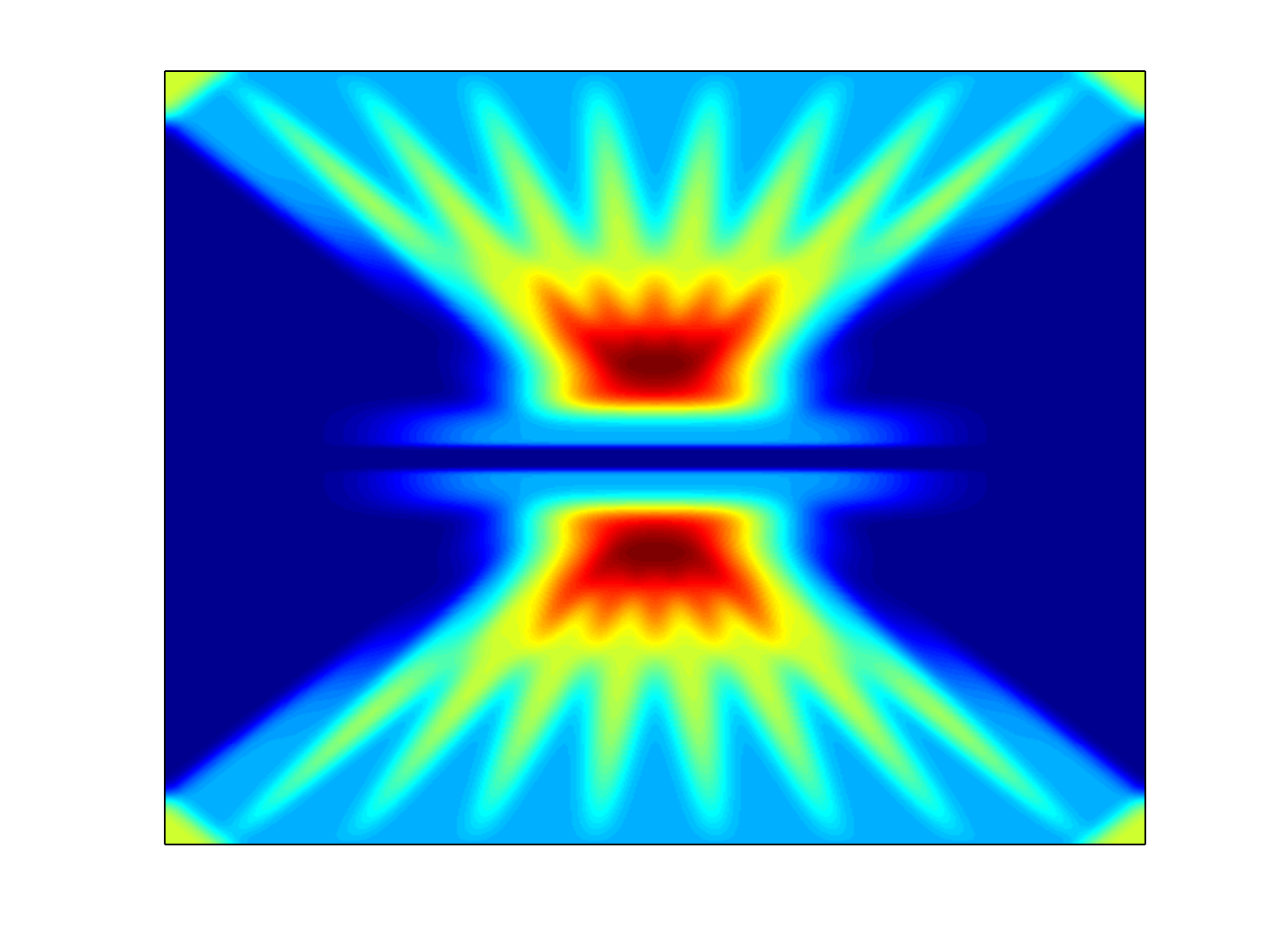}
\put(-300,0){(a) Whole frequency plane.}
\put(-175,0){(b) Horizontal cone. }
\put(-75,0){(c) Vertical cone.}
\end{center}
\caption{Frequency covering by shearlets $|\hat \psi_{j,k,m}|^2$: (a) Frequency covering of the entire frequency plane. (b) Frequency covering
of the horizontal cone. (c) Frequency covering of the vertical cone.}\label{fig:tile}
\end{figure}

\subsubsection{Hierarchical Structure}

Let us finally comment on the problem to achieve a hierarchical structuring. To allow fast implementations, the data structure of the transform is essential.
The hierarchical structure of the wavelet transform associated with a multiresolution analysis, for instance, enables a fast implementation based on filterbanks.
In addition, such a hierarchical ordering provides a full tree structure across scales, which is of particular importance for various applications such as
image compression and adaptive PDE schemes. It is in fact mainly due to this property -- and the unified treatment of the continuum and digital setting --
that the wavelet transform became an extremely successful methodology for many practical applications.

From a certain viewpoint, shearlets $\psi_{j,k,m}$ can essentially be regarded as wavelets associated with an anisotropic scale matrix $A_{2^j}$, when
the shear parameter $k$ is fixed. This observation allows to apply the wavelet transform to compute the shearlet coefficients, once the shear operation
is computed for each shear parameter $k$. This approach will be undertaken in the digital formulation of the compactly supported shearlet transform, and,
in fact, this approach implements a hierarchical structure into the shearlet transform. The reader should note that this approach does not lead to a completely
hierarchical structured shearlet transform -- also compare our discussion at the beginning of this section --, but it will be sufficient for deriving a
fast implementation while retaining a faithful digitization.

\subsection{Digital Separable Shearlet Transform (DSST)}
\label{subsec:dst_dst}

We now describe a faithful digitization of the continuum domain shearlet transform based on compactly supported shearlets as introduced in
 \cite{Lim2010}, which moreover is highly computationally efficient.

\subsubsection{Faithful Digitization of the Compactly Supported Shearlet Transform}
\label{subsubsec:digitizationOfCSST}

We start by discussing those theoretical aspects which allow a faithful digitization of the shearlet transform associated with the shearlet
system generated by \eqref{eq:csGenerator}. For this, we will only consider shearlets $\psi_{j,k,m}$ for the horizontal cone, i.e.,
belonging to $\Psi(\psi,c)$. Notice that the same procedure can be applied to compute the shearlet coefficients for the vertical cone,
i.e., those belonging to $\tilde\Psi(\tilde\psi,c)$, except for switching the order of variables.

To construct a separable shearlet generator $\psi \in L^2(\R^2)$ and an associated scaling function $\phi \in L^2(\R^2)$, let
$\phi \in L^2(\R)$ be a compactly supported 1D scaling function satisfying
\begin{equation}\label{eq:scale1}
\phi_1(x_1) = \sum_{n_1 \in \Z} h(n_1) \sqrt{2} \phi_1(2x_1-n_1)
\end{equation}
for some `appropriately chosen' filter $h$ --  we comment on the required condition below. An associated compactly supported 1D wavelet $\psi_1 \in L^2(\R)$
can then be defined by \begin{equation}\label{eq:scale2}
\psi_1(x_1) = \sum_{n_1 \in \Z} g(n_1) \sqrt{2} \phi_1(2x_1-n_1),
\end{equation}
where again $g$ is an `appropriately chosen' filter. The selected shearlet generator is then defined to be
\beq \label{eq:generatorHere}
\psi(x_1,x_2) = \psi_1(x_1)\phi_1(x_2),
\eeq
and the scaling function by
\[
\phi(x_1,x_2) = \phi_1(x_1)\phi_1(x_2).
\]

Let us comment on whether this is indeed a special case of the shearlet generators defined in \eqref{eq:csGenerator}.
The Fourier transform of $\psi$ defined in \eqref{eq:generatorHere} takes the form
\[
\hat \psi(\xi_1,\xi_2) = m_1(\xi_1/2)\hat \phi_1(\xi_1/2)\hat \phi_1(\xi_2/2),
\]
where $m_1$ is a trigonometric polynomial whose Fourier coefficients are $g(n_1)$. We need to compare this
expression with the Fourier transform of the shearlet generator $\psi$ given in \eqref{eq:csGenerator}, which is
\[
\hat \psi(\xi_1,\xi_2) = m_1(4\xi_1)\hat \phi_1(2\xi_1)\hat \phi_1(\xi_2),
\]
with 1D scaling function $\phi_1$ defined in \eqref{eq:scale1}. We remark that this later scaling function is
slightly different defined as in \eqref{eq:csGenerator}. This small adaption is for the sake of presenting a
simpler version of the implementation; essentially the same implementation strategy as the one we will describe
can be applied to the shearlet generator given in \eqref{eq:csGenerator}.

The filter coefficients $h$ and $g$ are required to be chosen so that $\psi$ satisfies a certain decay condition
(cf. \cite{KKL2010} of Chapter \cite{Introduction}) to guarantee a stable reconstruction from the shearlet coefficients.

For the signal $f \in L^2(\R^2)$ to be analyzed, we now assume that, for $J>0$ fixed, $f$ is of the form
\beq \label{eq:code2}
f(x) = \sum_{n \in \Z^2} f_J(n) 2^J \phi(2^Jx_1-n_1,2^Jx_2-n_2).
\eeq
Let us mention that this is a very natural assumption for a digital implementation in the sense that
the scaling coefficients can be viewed as sample values of $f$ -- in fact $f_J(n) = f(2^{-J}n)$ with appropriately
chosen $\phi$. Now aiming towards a faithful digitization of the shearlet coefficients $\langle f,\psi_{j,k,m}\rangle$
for $j = 0,\dots,J-1$, we first observe that
\begin{equation} \label{eq:code1}
\langle f,\psi_{j,k,m}\rangle = \langle f(S_{2^{-j/2}k}(\cdot)),\psi_{j,0,m}(\cdot)\rangle,
\end{equation}
and, WLOG we will from now on assume that $j/2$ is integer; otherwise either $\lceil j/2 \rceil$ or $\lfloor j/2 \rfloor$
would need to be taken. Our observation \eqref{eq:code1} shows us in fact precisely how to digitize the shearlet coefficients
$\langle f, \psi_{j,k,m} \rangle$: By applying the discrete separable wavelet transform associated with the anisotropic
sampling matrix $A_{2^{j}}$ to the sheared version of the data $f(S_{2^{-j/2}k}(\cdot))$.
This however requires -- compare the assumed form of $f$ given in \eqref{eq:code2} -- that
$f(S_{2^{-j/2}k}( \cdot ))$ is contained in the scaling space
$$
V_J = \{2^J\phi(2^J\cdot-n_1,2^J\cdot-n_2) : (n_1,n_2) \in \Z^2\}.
$$
It is easy to see that, for instance, if the shear parameter $2^{-j/2}k$ is non-integer, this is unfortunately not the case. The true
reason for this failure is that the shear matrix $S_{2^{-j/2}k}$ does {\em not} preserve the regular grid $2^{-J}\Z^2$ in $V_J$, i.e.,
\[
S_{2^{-j/2}k}(\Z^2) \neq \Z^2.
\]
In order to resolve this issue, we consider the new scaling space $V^k_{J+j/2,J}$ defined by
$$
V^k_{J+j/2,J} = \{2^{J+4/j}\phi(S_k(2^{J+j/2}\cdot-n_1,2^J\cdot-n_2)) : (n_1,n_2) \in \Z^2\}.
$$
We remark that the scaling space $V^k_{J+j/2,J}$ is obtained by refining the regular grid $2^{-J}\Z^2$ along the $x_1$-axis by a factor of
$2^{j/2}$. With this modification, the new grid $2^{-J-j/2}\Z \times 2^{-J}\Z$ is now invariant under the shear operator $S_{2^{-j/2}k}$,
since with $Q = \text{diag}(2,1)$,
\begin{eqnarray*}
2^{-J-j/2}\Z\times 2^{-J}\Z &=& 2^{-J}Q^{-j/2}(\Z^2) = 2^{-J}Q^{-j/2}(S_k(\Z^2))\\
&=& S_{2^{-j/2}k}(2^{-J-j/2}\Z\times 2^{-J}\Z).
\end{eqnarray*}
This allows us to rewrite $f(S_{2^{-j/2}k}( \cdot ))$ in \eqref{eq:code1} in the
following way.

\begin{lemma}
\label{lemm:rewriting}
Retaining the notations and definitions from this subsection, letting $\uparrow 2^{j/2}$ and $*_{1}$ denote the 1D upsampling operator by
a factor of $2^{j/2}$ and the 1D convolution operator along the $x_1$-axis, respectively, and setting $h_{j/2}(n_1)$ to be the Fourier
coefficients of the trigonometric polynomial
\beq \label{eq:poly1}
H_{j/2}(\xi_1) = \prod_{k=0}^{j/2-1}\sum_{n_1 \in \Z}h(n_1) e^{-2\pi i 2^k n_1 \xi_1},
\eeq
we obtain
\[
f(S_{2^{-j/2}k}(x)) = \sum_{n \in \Z^2} \tilde{f}_J(S_{k}{n})2^{J+j/4}\phi_{k}(2^{J+j/2}x_1-n_1,2^{J}x_2-n_2),
\]
where
\[
\tilde{f}_J(n) = ((f_J)_{\uparrow 2^{j/2}} *_{1} h_{j/2})(n).
\]
\end{lemma}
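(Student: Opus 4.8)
The plan is to reduce the whole statement to a single \emph{iterated refinement identity} for the one–dimensional scaling function $\phi_1$, and then to substitute the explicit action of the shear into the expansion \eqref{eq:code2} of $f$, using that $\phi$ tensors. First I would iterate the refinement equation \eqref{eq:scale1} exactly $j/2$ times. Passing to the Fourier domain, \eqref{eq:scale1} reads $\hat\phi_1(\xi) = 2^{-1/2} m_h(\xi/2)\hat\phi_1(\xi/2)$ with the symbol $m_h(\xi) = \sum_{n_1} h(n_1) e^{-2\pi i n_1 \xi}$; iterating and telescoping the resulting product of dilated symbols into $H_{j/2}$ as defined in \eqref{eq:poly1} gives
\[
\hat\phi_1(\xi) = 2^{-j/4}\, H_{j/2}(2^{-j/2}\xi)\,\hat\phi_1(2^{-j/2}\xi),
\]
which in the spatial domain is the key identity
\[
\phi_1(x_1) = 2^{j/4}\sum_{n_1 \in \Z} h_{j/2}(n_1)\,\phi_1(2^{j/2}x_1 - n_1).
\]
This expresses the coarse scaling function inside the space refined by $2^{j/2}$ along the first axis, with transition filter precisely $h_{j/2}$, and already pins down the normalization factor $2^{j/4}$.

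Next I would insert $S_{2^{-j/2}k}(x) = (x_1 + 2^{-j/2}k\,x_2,\, x_2)$ into \eqref{eq:code2}, using $\phi(x_1,x_2)=\phi_1(x_1)\phi_1(x_2)$ so that the two coordinates decouple. Applying the iterated refinement identity to the first tensor factor $\phi_1\big(2^J(x_1+2^{-j/2}k x_2)-n_1\big)$ is the decisive step: the dilation by $2^{j/2}$ coming from refinement multiplies the fractional shear offset $2^{-j/2}k$ and turns it into the \emph{integer} offset $2^J k\, x_2$. Concretely, the argument of the refined factor becomes $2^{J+j/2}x_1 + 2^J k\, x_2 - (2^{j/2} n_1 + p)$, which is exactly the mechanism behind the grid–invariance property $S_{2^{-j/2}k}(2^{-J-j/2}\Z\times 2^{-J}\Z) = 2^{-J-j/2}\Z\times 2^{-J}\Z$ recorded above.

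Collecting terms, the double sum over $n_1$ and the refinement index $p$ reorganizes into a single sum over $r_1 = 2^{j/2}n_1 + p$, with the coefficient attached to the grid point $(r_1,n_2)$ equal to $\sum_{n_1} f_J(n_1,n_2)\,h_{j/2}(r_1 - 2^{j/2}n_1)$; this is by definition $\big((f_J)_{\uparrow 2^{j/2}} *_1 h_{j/2}\big)(r_1,n_2) = \tilde f_J(r_1,n_2)$, since upsampling places $f_J$ on multiples of $2^{j/2}$ and the convolution then shifts by $h_{j/2}$. At this point the expression reads
\[
f(S_{2^{-j/2}k}(x)) = 2^{J+j/4}\sum_{r_1,n_2} \tilde f_J(r_1,n_2)\,\phi_1(2^{J+j/2}x_1 + 2^J k x_2 - r_1)\,\phi_1(2^J x_2 - n_2).
\]
Finally I would recognize the sheared scaling function $\phi_k = \phi(S_k\,\cdot\,)$ via $\phi_k(2^{J+j/2}x_1 - m_1, 2^J x_2 - m_2) = \phi_1(2^{J+j/2}x_1 + 2^J k x_2 - m_1 - k m_2)\,\phi_1(2^J x_2 - m_2)$, and substitute $m_1 = r_1 - k n_2$, $m_2 = n_2$. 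Because $k\in\Z$, the map $m\mapsto S_k m = (m_1+k m_2,\,m_2)$ is a bijection of $\Z^2$, so $(r_1,n_2)=S_k m$ and the sum collapses to the claimed $\sum_{n} \tilde f_J(S_k n)\,2^{J+j/4}\phi_k(2^{J+j/2}x_1-n_1,\,2^Jx_2-n_2)$.

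The main obstacle is the first step: obtaining the iterated refinement identity with the correct composite filter $h_{j/2}$ and the correct normalization $2^{j/4}$. Working in the Fourier domain makes the collapse of the dilated symbols into $H_{j/2}$ transparent and keeps the bookkeeping of powers of $2$ under control; once this is in hand the interaction of shear and refinement, and the final $S_k$ re–indexing, are essentially formal and rely only on the integrality of $k$. One should also check that the interchange of the sums is legitimate, but this is routine since $h$ and $\phi_1$ are compactly supported, so all filters involved are finitely supported.
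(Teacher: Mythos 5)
Your proof is correct and takes essentially the same route as the paper: the iterated refinement identity you derive in the Fourier domain is exactly equation \eqref{eq:across1} of Proposition \ref{prop:cascade} specialized to $j_1 = J$, $j_2 = J+j/2$ (which the paper simply cites rather than re-derives), and your coordinate-wise treatment of the shear is the paper's commutation relation $Q^{j/2}S_{2^{-j/2}k} = S_k Q^{j/2}$ followed by the same re-indexing $n \mapsto S_k n$, which is legitimate precisely because $k$ is an integer.
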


The proof of this lemma requires the following result, which follows from the cascade algorithm in the theory of wavelet.

\begin{proposition}[\cite{Lim2010}]\label{prop:cascade}
Assume that $\phi_1$ and $\psi_1 \in L^2(\R)$ satisfy equations \eqref{eq:scale1} and \eqref{eq:scale2} respectively.
For positive integers $j_1 \leq j_2$, we then have
\begin{equation}\label{eq:across1}
2^{\frac{j_1}{2}}\phi_1(2^{j_1}x_1-n_1) = \sum_{d_1 \in \Z}h_{j_2-j_1}(d_1-2^{j_2-j_1}n_1)2^{\frac{j_2}{2}}\phi_1(2^{j_2}x_1-d_1)
\end{equation}
and
\begin{equation}\label{eq:across2}
2^{\frac{j_1}{2}}\psi_1(2^{j_1}x_1-n_1) = \sum_{d_1 \in \Z}g_{j_2-j_1}(d_1-2^{j_2-j_1}n_1)2^{\frac{j_2}{2}}\phi_1(2^{j_2}x_1-d_1),
\end{equation}
where $h_j$ and $g_j$ are the Fourier coefficients of the trigonometric polynomials $H_j$ defined in \eqref{eq:poly1} and $G_j$ defined by
$$
G_{j}(\xi_1) = \Bigl(\prod_{k=0}^{j-2}\sum_{n_1 \in \Z}h(n_1) e^{-2\pi i 2^k n_1 \xi_1}\Bigr)\Bigl( \sum_{n_1 \in \Z}g(n_1) e^{-2\pi i 2^{j-1} n_1 \xi_1}\Bigr)
$$
for $j>0$ fixed.
\end{proposition}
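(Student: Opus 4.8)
The plan is to prove both identities by induction on the scale difference $j := j_2 - j_1$, after reducing to the case $j_1 = 0$. Since both sides of \eqref{eq:across1} are built from dilates of $\phi_1$, substituting $y = 2^{j_1}x_1$ and dividing by $2^{j_1/2}$ collapses \eqref{eq:across1} to the scale-normalized statement
\begin{equation}\label{eq:prop-reduced}
\phi_1(y-n_1) = \sum_{d_1 \in \Z} h_{j}(d_1 - 2^{j}n_1)\, 2^{j/2}\phi_1(2^{j}y - d_1),
\end{equation}
and likewise turns \eqref{eq:across2} into the same statement with $\psi_1$ on the left and $g_j$ replacing $h_j$. Thus it suffices to treat $j_1 = 0$ and arbitrary $j \ge 0$. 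Because $\phi_1$ is compactly supported, the masks $h$ and $g$ are finitely supported; every sum below is finite and all rearrangements are automatically justified, so no convergence analysis is needed.

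For \eqref{eq:prop-reduced} I would induct on $j$. The case $j = 0$ is trivial ($H_0 \equiv 1$, so $h_0 = \delta$), and $j = 1$ is exactly \eqref{eq:scale1} after the index shift $s = d_1 - 2n_1$, noting that \eqref{eq:poly1} gives $h_1 = h$. For the inductive step I would feed the hypothesis for $j-1$ into one further refinement performed at the finest level already reached: applying \eqref{eq:scale1} (dilated to level $j-1$) to each term $2^{(j-1)/2}\phi_1(2^{j-1}y - p)$ and interchanging the two finite sums leaves the coefficient $\sum_p h_{j-1}(p - 2^{j-1}n_1)\,h(d_1 - 2p)$ in front of $2^{j/2}\phi_1(2^{j}y - d_1)$. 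A shift of the summation index rewrites this as $\sum_{p} h_{j-1}(p)\,h\bigl((d_1 - 2^{j}n_1) - 2p\bigr)$, whose generating symbol is $\bigl(\sum_{n_1}h(n_1)e^{-2\pi i n_1 \xi_1}\bigr) H_{j-1}(2\xi_1) = H_j(\xi_1)$ by \eqref{eq:poly1}; hence the coefficient is precisely $h_j(d_1 - 2^{j}n_1)$, closing the induction.

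For the $\psi_1$ identity I would not induct separately but reduce to the scaling case just proved. One application of \eqref{eq:scale2} expresses $\psi_1(y - n_1) = \sum_q g(q - 2n_1)\,2^{1/2}\phi_1(2y - q)$; substituting the already-established \eqref{eq:prop-reduced} at scales $1 \to j$ (difference $j-1$) into each $2^{1/2}\phi_1(2y - q)$ and collecting terms yields the coefficient $\sum_q g(q - 2n_1)\,h_{j-1}(d_1 - 2^{j-1}q)$, which after an index shift becomes $\sum_{q} g(q)\,h_{j-1}\bigl((d_1 - 2^{j}n_1) - 2^{j-1}q\bigr)$. Its symbol is $H_{j-1}(\xi_1)\bigl(\sum_{n_1}g(n_1)e^{-2\pi i 2^{j-1}n_1\xi_1}\bigr)$, which is exactly $G_j(\xi_1)$; the coefficient is therefore $g_j(d_1 - 2^{j}n_1)$, as claimed. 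The one genuinely delicate point throughout is this bookkeeping of index shifts and the identification of the composed masks with the product symbols $H_j$ and $G_j$ of \eqref{eq:poly1}; everything else is mechanical. I expect matching the $g$-factor to the correct frequency $2^{j-1}$ in $G_j$ — a consequence of $g$ being applied at the coarsest subdivision step while each step transforms the symbol as $C(\xi_1) \mapsto M(\xi_1)\,C(2\xi_1)$ with $M$ the step's mask symbol — to be the step most prone to error.
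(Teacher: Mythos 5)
Your proof is correct: the reduction to $j_1=0$, the induction on $j$ driven by the two-scale relation \eqref{eq:scale1} together with the symbol factorization $H_j(\xi_1)=\bigl(\sum_{n_1}h(n_1)e^{-2\pi i n_1\xi_1}\bigr)H_{j-1}(2\xi_1)$, and the treatment of \eqref{eq:across2} by one application of \eqref{eq:scale2} followed by the already-proved $\phi_1$-identity—whose composed mask has symbol $H_{j-1}(\xi_1)\bigl(\sum_{n_1}g(n_1)e^{-2\pi i 2^{j-1}n_1\xi_1}\bigr)=G_j(\xi_1)$, with the $g$-factor correctly landing at frequency $2^{j-1}$—all check out, including the index shifts. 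The paper itself gives no proof of this proposition (it cites \cite{Lim2010} and remarks only that it follows from the cascade algorithm of wavelet theory), and your induction is precisely that cascade argument written out in full, so your route coincides with the one the paper intends.
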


\begin{proof}[Proof of Lemma \ref{lemm:rewriting}]
Equation \eqref{eq:across1} with $j_1 = J$ and $j_2 = J+j/2$ implies that
\begin{equation}\label{eq:refine}
2^{J/2}\phi_1(2^Jx_1-n_1) = \sum_{d_1 \in \Z} h_{J-j/2}(d_1-2^{j/2}n_1)2^{J/2+j/4}\phi_1(2^{J+j/2}x_1-d_1).
\end{equation}
Also, since $\phi$ is a 2D separable function of the form $\phi(x_1,x_2) = \phi_1(x_1)\phi_1(x_2)$, we have that
\[
f(x) = \sum_{n_2 \in \Z}\Bigl(\sum_{n_1 \in \Z} f_J(n_1,n_2) 2^{J/2}\phi_1(2^Jx_1-n_1)\Bigr) 2^{J/2}\phi_1(2^Jx_2-n_2).
\]
By \eqref{eq:refine}, we obtain
\[
f(x) = \sum_{n \in \Z^2}\tilde{f}_J(n)2^{J+j/4}\phi(2^JQ^{j/2}x-n),
\]
where $Q = \text{diag}(2,1)$. Using $Q^{j/2}S_{2^{-j/2}k} = S_{k}Q^{j/2}$, this finally implies
\begin{eqnarray*}
f(S_{2^{-j/2}k}(x)) &=& \sum_{n \in \Z^2} \tilde{f}_J(n)2^{J+j/4}\phi(2^JQ^{j/2}S_{2^{-j/2}k}(x)-n) \\
&=& \sum_{n \in \Z^2} \tilde{f}_J(n)2^{J+j/4}\phi(S_k(2^JQ^{j/2}x-S_{-k}n)) \\
&=& \sum_{n \in \Z^2} \tilde{f}_J(S_kn)2^{J+j/4}\phi(S_k(2^JQ^{j/2}x-n)).
\end{eqnarray*}
The lemma is proved. \qed
\end{proof}

The second term to be digitized in \eqref{eq:code1} is the shearlet $\psi_{j,k,m}$ itself. A direct
corollary from Proposition \ref{prop:cascade} is the following result.

\begin{lemma}
\label{lemm:rewriting2}
Retaining the notations and definitions from this subsection, we obtain
\[
\psi_{j,k,m}(x) = \sum_{d \in \Z^2} g_{J-j}(d_1-2^{J-j}m_1)h_{J-j/2}(d_2-2^{J-j/2}m_2)2^{J+j/4}\phi(2^JQ^{j/2}x-d).
\]
\end{lemma}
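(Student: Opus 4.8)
The plan is to reduce the two‑dimensional identity to two one‑dimensional instances of the cascade algorithm (Proposition~\ref{prop:cascade}), exploiting the separability of the generator $\psi=\psi_1\otimes\phi_1$ from \eqref{eq:generatorHere}. First I would isolate the shear: by \eqref{eq:code1} together with the factorization $S_kA_{2^j}=A_{2^j}S_{2^{-j/2}k}$, the sheared element $\psi_{j,k,m}$ is the composition of the unsheared element
\[
\psi_{j,0,m}(x)=2^{3j/4}\psi(A_{2^j}x-m)=\bigl(2^{j/2}\psi_1(2^jx_1-m_1)\bigr)\bigl(2^{j/4}\phi_1(2^{j/2}x_2-m_2)\bigr)
\]
with the shear $S_{2^{-j/2}k}$. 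So the strategy is to expand the \emph{separable} function $\psi_{j,0,m}$ in the fine‑scale scaling functions and then transport the expansion through the shear exactly as in the proof of Lemma~\ref{lemm:rewriting}.

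For the first tensor factor, which is a dilate/translate of the wavelet $\psi_1$, I would apply the wavelet refinement relation \eqref{eq:across2}; expanding $2^{j/2}\psi_1(2^jx_1-m_1)$ into fine‑scale copies of $\phi_1$ produces the filter $g_{J-j}$ and the index shift $2^{J-j}m_1$. For the second factor, a dilate/translate of $\phi_1$, I would apply the scaling refinement relation \eqref{eq:across1}, which produces the filter $h_{J-j/2}$ and the index shift $2^{J-j/2}m_2$ (the subscript $J-j/2$ reflecting that the $x_2$‑content of the shearlet sits at anisotropic scale $2^{j/2}$). Multiplying the two one‑dimensional expansions and recombining $\phi=\phi_1\otimes\phi_1$ collapses the double sum into a single sum over $d\in\Z^2$ whose coefficient is precisely the product $g_{J-j}(d_1-2^{J-j}m_1)\,h_{J-j/2}(d_2-2^{J-j/2}m_2)$, while the scalar prefactors from the two relations combine into the stated power of two.

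The final ingredient is to put the shear back in and to pass to the refined grid $2^JQ^{j/2}$ with $Q=\text{diag}(2,1)$. Here I would use the commutation identity $Q^{j/2}S_{2^{-j/2}k}=S_kQ^{j/2}$ — the same one already used in Lemma~\ref{lemm:rewriting} — which converts the non‑integer shear $S_{2^{-j/2}k}$ acting on $x$ into the integer shear $S_k$ acting on the refined lattice $2^{-J-j/2}\Z\times 2^{-J}\Z$. This is exactly the step that makes that lattice invariant and keeps $\phi$ in the argument $2^JQ^{j/2}x-d$, after absorbing $S_k$ (which preserves $\Z^2$) into a relabeling of the summation index $d$.

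I expect the main obstacle to be the scale bookkeeping in the two cascade relations: one must choose the source and target scales $(j_1,j_2)$ in \eqref{eq:across1}--\eqref{eq:across2} so that exactly the filters $g_{J-j}$ and $h_{J-j/2}$ with the correct index shifts appear, and then verify that the anisotropic refinement $Q^{j/2}$ (the factor distinguishing the $x_1$‑ and $x_2$‑scales) is consistent with the shear commutation so that no spurious extra refinement step is introduced. Everything else — the tensoring and the collection of normalizing constants — is routine once the one‑dimensional expansions and the commutation relation are in place.
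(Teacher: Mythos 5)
Your first two steps---splitting off the shear via \eqref{eq:code1} and expanding the two tensor factors of $\psi_{j,0,m}$ through \eqref{eq:across2} and \eqref{eq:across1}---are exactly the derivation the paper intends; the paper gives no argument beyond declaring the lemma a direct corollary of Proposition \ref{prop:cascade}, and that corollary is precisely your tensor-plus-cascade computation. The genuine gap is your final step. You claim that after using $Q^{j/2}S_{2^{-j/2}k}=S_kQ^{j/2}$ the integer shear $S_k$ can be ``absorbed into a relabeling of the summation index $d$'' while ``keeping $\phi$ in the argument $2^JQ^{j/2}x-d$''. This fails: substituting $d=S_kd'$ in $\phi(S_k(2^JQ^{j/2}x)-d)$ gives $\phi(S_k(2^JQ^{j/2}x-d'))$, so the shear moves inside the argument of $\phi$ and stays there. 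Invariance of the lattice $\Z^2$ under $S_k$ does not make the \emph{function} $\phi$ shear-invariant: $\phi(S_k\cdot)=\phi_k\neq\phi$, and for $k\neq 0$ the element $\psi_{j,k,m}=\psi_{j,0,m}(S_{2^{-j/2}k}\cdot)$ in general does not lie in the span of the unsheared translates $\phi(2^JQ^{j/2}\cdot-d)$ at all. This is exactly why Lemma \ref{lemm:rewriting} produces the sheared scaling function $\phi_k$ rather than $\phi$, and why Theorem \ref{theo:Lim} needs the correlation filter $\Phi_k(n)=\langle\phi(S_k\cdot),\phi(\cdot-n)\rangle$. In the paper's scheme the shear is transferred once and for all to the data side by \eqref{eq:code1}; it is never re-attached to the analyzing element, and the lemma must be read as a statement about the unsheared $\psi_{j,0,m}$ (the $k$ on the printed left-hand side is best regarded as a misprint, the right-hand side being $k$-independent).

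There is also a scale-bookkeeping mismatch that you assert away rather than resolve. Your filter choices $g_{J-j}(d_1-2^{J-j}m_1)$ and $h_{J-j/2}(d_2-2^{J-j/2}m_2)$ correspond to taking $j_2=J$ in both \eqref{eq:across2} and \eqref{eq:across1}, i.e., expanding both factors to the \emph{isotropic} scale $2^J$; then the expansion necessarily reads $2^J\phi(2^Jx-d)$, with prefactor $2^J$ and no $Q^{j/2}$. To land on the anisotropic grid, i.e., to obtain $2^{J+j/4}\phi(2^JQ^{j/2}x-d)$, you must take $j_2=J+j/2$ in \eqref{eq:across2}, which produces $g_{J-j/2}(d_1-2^{J-j/2}m_1)$, not $g_{J-j}(d_1-2^{J-j}m_1)$. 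So the coefficients and the scaling-function argument you write down cannot both come out of the same computation, and your claim that the ``scalar prefactors from the two relations combine into the stated power of two'' does not survive checking. (This tension is already present in the printed formula; the two correct variants---the isotropic one above and the anisotropic one with $g_{J-j/2}$---are equivalent via the filter identity $G_{J-j/2}(\xi)=G_{J-j}(2^{j/2}\xi)H_{j/2}(\xi)$, which is also what reconciles the lemma with the $\overline{h}_{j/2}$ and downsampling steps in Theorem \ref{theo:Lim}.) A sound write-up proves one of these two identities for $k=0$ and stops; no shear re-insertion step can produce the printed right-hand side for $k\neq 0$.
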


As already indicated before, we will make use of the discrete separable wavelet transform associated with an anisotropic scaling matrix,
which, for $j_1$ and $j_2>0$ as well as $c \in \ell(\Z^2)$, we define by
\begin{equation}\label{eq:wavelet}
W_{j_1,j_2}(c)(n_1,n_2) = \sum_{m \in \Z^2}g_{j_1}(m_1-2^{j_1}n_1)h_{j_2}(m_2-2^{j_2}n_2)c(m_1,m_2), \quad  (n_1,n_2) \in \Z^2.
\end{equation}

Finally, Lemmata \ref{lemm:rewriting} and \ref{lemm:rewriting2} yield the following digitizable form of the shearlet coefficients $\langle f,\psi_{j,k,m}\rangle$.

\begin{theorem}[\cite{Lim2010}]\label{theo:Lim}
Retaining the notations and definitions from this subsection, and letting$\downarrow 2^{j/2}$
be 1D downsampling by a factor of $2^{j/2}$ along the horizontal axis, we obtain
\[
\langle f,\psi_{j,k,m} \rangle = W_{J-j,J-j/2}\Bigl( \Bigl((\tilde{f}_J(S_k\cdot)*\Phi_k) *_1 \overline{h}_{j/2} \Bigr)_{\downarrow 2^{j/2}}\Bigr)(m),
\]
where $\Phi_k(n) = \langle \phi(S_k(\cdot)), \phi(\cdot-n)\rangle$ for $n \in \Z^2$, and $\overline{h}_{j/2}(n_1) = h_{j/2}(-n_1)$.
\end{theorem}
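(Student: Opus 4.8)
\emph{Proof plan.}
The plan is to begin from the reduction \eqref{eq:code1}, which turns the coefficient into $\langle f(S_{2^{-j/2}k}(\cdot)),\psi_{j,0,m}(\cdot)\rangle$, and to insert the two expansions supplied by Lemmata~\ref{lemm:rewriting} and~\ref{lemm:rewriting2}. Lemma~\ref{lemm:rewriting} writes the sheared data as a superposition of shifted copies of $\phi(S_k(\cdot))$ with coefficients $\tilde{f}_J(S_k n)$, while Lemma~\ref{lemm:rewriting2} writes the unsheared shearlet $\psi_{j,0,m}$ as a superposition of shifted scaling functions with coefficients $g_{J-j}(d_1-2^{J-j}m_1)\,h_{J-j/2}(d_2-2^{J-j/2}m_2)$. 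Substituting both and using sesquilinearity reduces $\langle f,\psi_{j,k,m}\rangle$ to a double sum, over the data index $n$ and the filter index $d$, weighted by inner products of shifted scaling functions.

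Before these sums can be evaluated, the two families of scaling functions must be sampled on a common grid, and this reconciliation is the main obstacle. The data from Lemma~\ref{lemm:rewriting} lives on the grid refined by a factor $2^{j/2}$ along the $x_1$-axis (the grid $2^JQ^{j/2}$ that is invariant under $S_{2^{-j/2}k}$), whereas the $x_1$-sampling carried by $\psi_{j,0,m}$ is coarser by exactly this factor. I would bridge the gap with the two-scale relation \eqref{eq:across1} applied with $j_1=J$ and $j_2=J+j/2$, which refines $\phi_1(2^J\cdot)$ into $\phi_1(2^{J+j/2}\cdot)$ at the price of the filter $h_{j/2}$. Carried across the inner product -- that is, taking the adjoint of `upsample and convolve with $h_{j/2}$' -- this contributes precisely the correlation with the flipped filter $\overline{h}_{j/2}$ followed by the decimation ${\downarrow}\,2^{j/2}$ appearing in the statement; the care here is to get the orientation of the flip and of the downsampling right, which is eased by the reality of $g$ and $h$ so that conjugation collapses to $\overline{h}_{j/2}(n_1)=h_{j/2}(-n_1)$.

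Once both families sit on the refined grid, the remaining scaling-function inner products are computed by a single change of variables: $\langle \phi(S_k(\cdot-n)),\phi(\cdot-d)\rangle=\Phi_k(d-n)$, with $\Phi_k(n)=\langle\phi(S_k(\cdot)),\phi(\cdot-n)\rangle$ as in the statement, the two normalizations $2^{J+j/4}$ cancelling against the Jacobian of the grid map $2^JQ^{j/2}$ so that no stray constant survives. Summing over the data index $n$ then collapses the weighted sum $\sum_n \tilde{f}_J(S_k n)\,\Phi_k(\cdot-n)$ into the plain convolution $\tilde{f}_J(S_k\cdot)*\Phi_k$, and performing the $x_1$-sum produced by the alignment step of the previous paragraph supplies the operations $*_1\overline{h}_{j/2}$ and ${\downarrow}\,2^{j/2}$, assembling the single array $\big((\tilde{f}_J(S_k\cdot)*\Phi_k)*_1\overline{h}_{j/2}\big)_{\downarrow 2^{j/2}}$.

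Finally, the residual sum over $d$, weighted by $g_{J-j}(d_1-2^{J-j}m_1)\,h_{J-j/2}(d_2-2^{J-j/2}m_2)$ and applied to that array, is by the very definition \eqref{eq:wavelet} nothing but the value at $m$ of the discrete anisotropic wavelet transform $W_{J-j,J-j/2}$. Reading off this identification gives the asserted identity. I expect the only delicate points beyond the grid reconciliation to be pure bookkeeping: tracking the conjugates on $g$, $h$, keeping the two convolution indices $n$ and $d$ separate, and confirming that the one-dimensional operators act along the intended axis.
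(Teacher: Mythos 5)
Your proposal is correct and follows essentially the same route as the paper: the paper offers no detailed argument for Theorem \ref{theo:Lim}, merely asserting that Lemmata \ref{lemm:rewriting} and \ref{lemm:rewriting2}, together with \eqref{eq:code1} and the definition \eqref{eq:wavelet}, yield the result, and your plan is precisely that combination with the details filled in. In particular, you correctly isolate the one step the paper suppresses -- reconciling the refined grid of Lemma \ref{lemm:rewriting} with the coarser $x_1$-sampling carried by $\psi_{j,0,m}$ via \eqref{eq:across1}, whose adjoint is exactly what produces the operations $*_1\overline{h}_{j/2}$ and $\downarrow 2^{j/2}$ in the final formula -- and the normalizing constants do cancel against the Jacobian as you claim.
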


\subsubsection{Algorithmic Realization}
\label{subsubsec:algorithm}

Computing the shearlet coefficients using Theorem \ref{theo:Lim} now restricts to applying the discrete separable wavelet transform \eqref{eq:wavelet}
associated with the sampling matrix $A_{2^j}$ to the scaling coefficients
\begin{equation}\label{eq:dshear}
S^d_{2^{-j/2}k}(f_J)(n) := \Bigl((\tilde{f}_J(S_k\cdot)*\Phi_k) *_1 \overline{h}_{j/2} \Bigr)_{\downarrow 2^{j/2}}(n) \quad \text{for} \quad f_J \in \ell^2(\Z^2).
\end{equation}
Before we state the explicit steps necessary to achieve this, let us take a closer look at the scaling coefficients $S^d_{2^{-j/2}k}(f_J)$, which
can be regarded as a new sampling of the data $f_J$ on the integer grid $\Z^2$ by the digital shear operator $S^d_{2^{-j/2}k}$. This procedure is
illustrated in Fig. \ref{fig:grid} in the case $2^{-j/2}k = -1/4$.
\begin{figure}[t]
\sidecaption[t]
\includegraphics[width=1.5in]{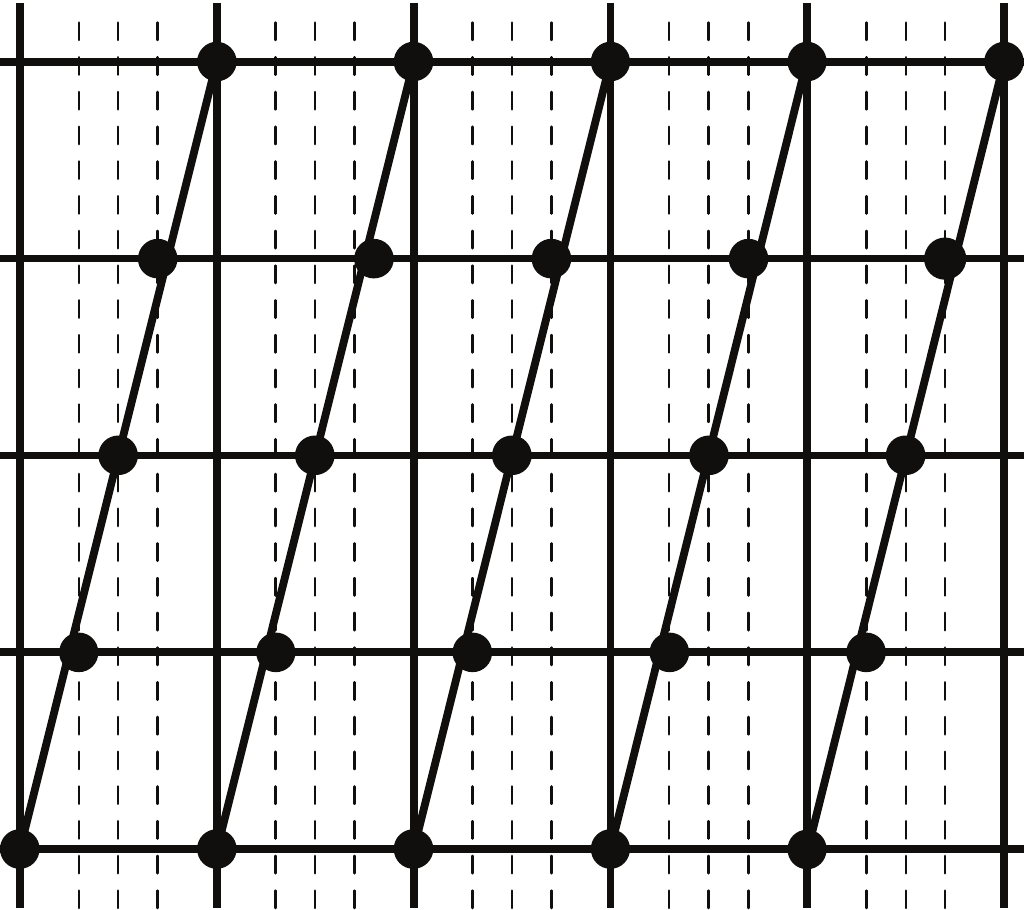}
\caption{Illustration of application of the digital shear operator $S^d_{-1/4}$: The dashed lines correspond to the refinement of the integer
grid. The new sample values lie on the intersections of the sheared lines associated with $S_{1/4}$ with this refined grid. }
\label{fig:grid}
\end{figure}

Let us also mention that the filter coefficients $\Phi_k(n)$ in \eqref{eq:dshear} can in fact be easily precomputed for each shear parameter $k$. For a
practical implementation, one may sometimes even skip this additional convolution step assuming that $\Phi_k = \chi_{(0,0)}$.

\bigskip

Concluding, the implementation strategy for the DSST cascades the following steps:
\begin{itemize}
\item {\bf Step 1:} For given input data $f_J$, apply the 1D upsampling operator by a factor of $2^{j/2}$ at the finest scale $j = J$.
\item {\bf Step 2:} Apply 1D convolution to the upsampled input data $f_J$ with 1D lowpass filter $h_{j/2}$ at the finest scale $j = J$.
This gives $\tilde{f}_J$.
\item {\bf Step 3:} Resample $\tilde{f}_J$ to obtain $\tilde{f}_J(S_k(n))$ according to the shear sampling matrix $S_k$ at the finest scale $j = J$.
Note that this resampling step is straightforward, since the integer grid is invariant under the shear matrix $S_k$.
\item {\bf Step 4:} Apply 1D convolution to $\tilde{f}_J(S_k(n))$ with $\overline{h}_{j/2}$ followed by 1D downsampling by a factor of $2^{j/2}$ at
the finest scale $j = J$.
\item {\bf Step 5:} Apply the separable wavelet transform $W_{J-j,J-j/2}$ across scales $j = 0,1,\dots,J-1$.
\end{itemize}

\subsubsection{Digital Realization of Directionality}
\label{subsec:direction}

Since the digital realization of a shear matrix $S_{2^{-j/2}k}$ by the digital shear operator $S^d_{2^{-j/2}k}$ is crucial for deriving a
faithful digitization of the continuum domain shearlet transform, we will devote this subsection to a closer analysis.

We start by remarking that in fact in the continuum domain, at least {\em two} operators exist which naturally provide directionality: Rotation and
shearing. Rotation is a very convenient tool to provide directionality in the sense that it preserves important geometric information such
as length, angles, and parallelism. However, this operator does not preserve the integer lattice, which causes severe problems for digitization.
In contrast to this, a shear matrix $S_k$ does not only provide directionality, but also preserves the integer lattice when the shear parameter $k$
is integer. Thus, it is conceivable to assume that directionality can be naturally discretized by using a shear matrix $S_k$.

To start our analysis of the relation between a shear matrix $S_{2^{-j/2}k}$ and the associated digital shear operator $S^d_{2^{-j/2}k}$,
let us consider the following simple example: Set $f_c = \chi_{\{x : x_1 = 0\}}$. Then digitize $f_c$ to obtain a function $f_d$ defined
on $\Z^2$ by setting $f_d(n) = f_c(n)$ for all $n \in \Z^2$. For fixed shear parameter $s \in \R$, apply the shear transform $S_s$ to $f_c$
yielding the sheared function $f_c(S_s( \cdot ))$. Next, digitize also this function by considering $f_c(S_s( \cdot ))|_{\Z^2}$.
The functions $f_d$ and $f_c(S_s( \cdot ))|_{\Z^2}$ are illustrated in Fig.~\ref{fig:lines} for $s = -1/4$.
\begin{figure}[h]
\begin{center}
\includegraphics[height=1.0in]{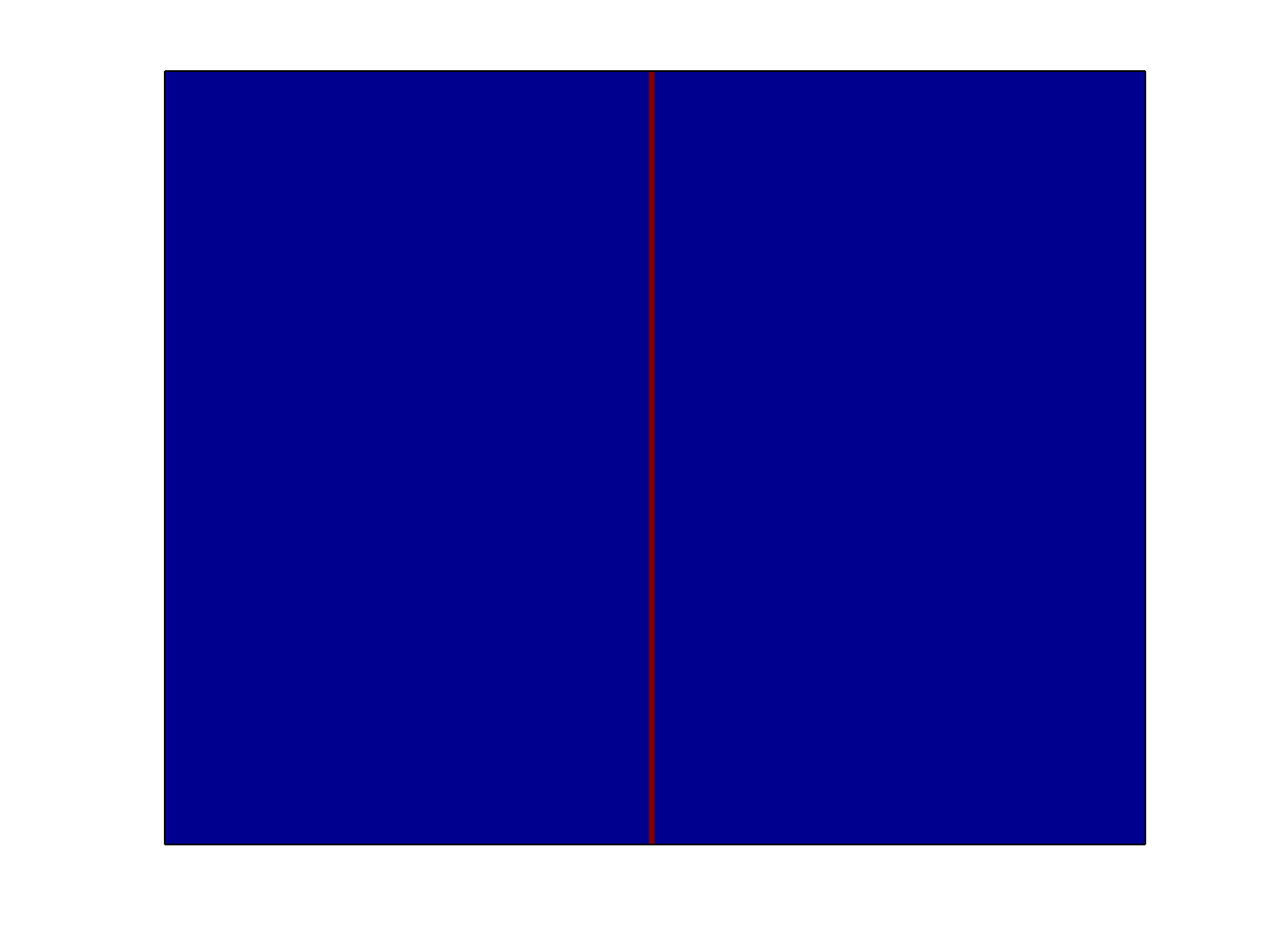}
\hspace*{1cm}
\includegraphics[height=1.0in]{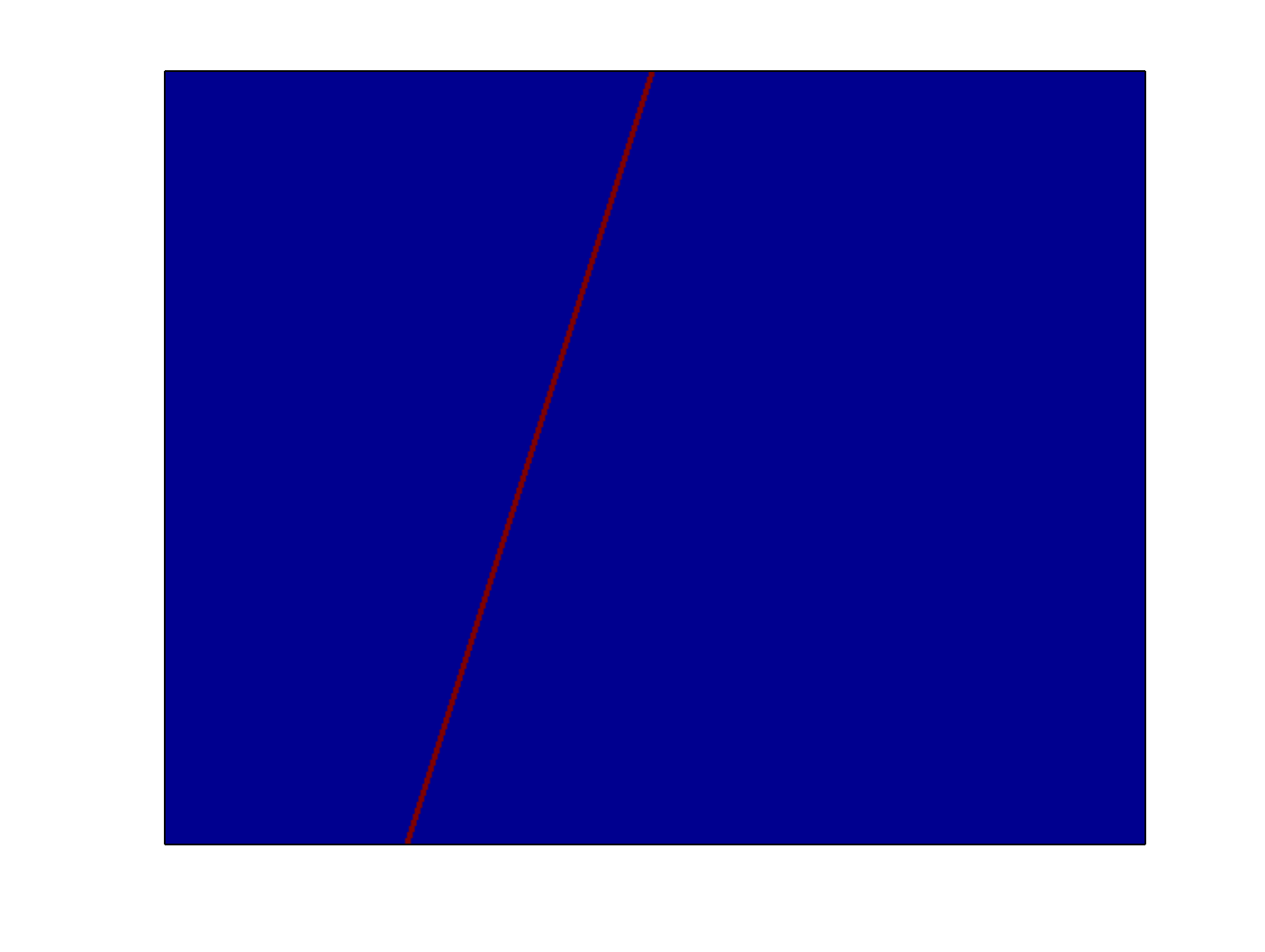}
\put(-180,0){(a)}
\put(-50,0){(b)}
\caption{(a) Original image $f_d(n)$. (b) Sheared image $f_c(S_{-1/4}n)$.}\label{fig:lines}
\end{center}
\end{figure}
We now focus on the problem that the integer lattice is not invariant under the shear matrix $S_{1/4}$. This prevents the sampling points $S_{1/4}(n)$,
$n \in \Z^2$ from lying  on the integer grid, which causes aliasing of the digitized image $f_c(S_{-1/4}( \cdot ))|_{\Z^2}$ as illustrated in
Fig.~\ref{fig:aliase}(a). In order to avoid this aliasing effect, the grid needs to be refined by a factor of 4 along the horizontal axis
followed by computing sample values on this refined grid.

More generally, when the shear parameter is given by $s = -2^{-j/2}k$, one can essentially avoid this directional aliasing effect by
refining a grid by a factor of $2^{j/2}$ along the horizontal axis followed by computing interpolated sample values on this refined grid.
This ensures that the resulting grid contains the sampling points $((2^{-j/2}k)n_2,n_2)$ for any $n_2 \in \Z$ and is
preserved by the shear matrix $S_{-2^{-j/2}k}$.
This procedure precisely coincides with the application of the digital shear operator $S^d_{2^{-j/2}k}$, i.e., we just described Steps 1 -- 4 from
Subsection \ref{subsubsec:algorithm} in which the new scaling coefficients $S^d_{2^{-j/2}k}({f}_J)(n)$ are computed.

Let us come back to the exemplary situation of $f_c = \chi_{\{x : x_1 = 0\}}$ and $S_{-1/4}$ we started our excursion with and compare
$f_c(S_{-1/4}( \cdot ))|_{\Z^2}$ with $S^d_{-1/4}(f_d)|_{\Z^2}$ obtained by applying the digital shear operator $S^d_{-1/4}$ to $f_d$.
And, in fact, the directional aliasing effect on the digitized image $f_c(S_{-1/4}(n))$ in frequency illustrated in Fig.~\ref{fig:aliase}(a)
is shown to be avoided in Fig.~\ref{fig:aliase} (b)-(c) by considering $S^d_{-1/4}(f_d)|_{\Z^2}$.
\begin{figure}[h]
\begin{center}
\includegraphics[height=1.0in]{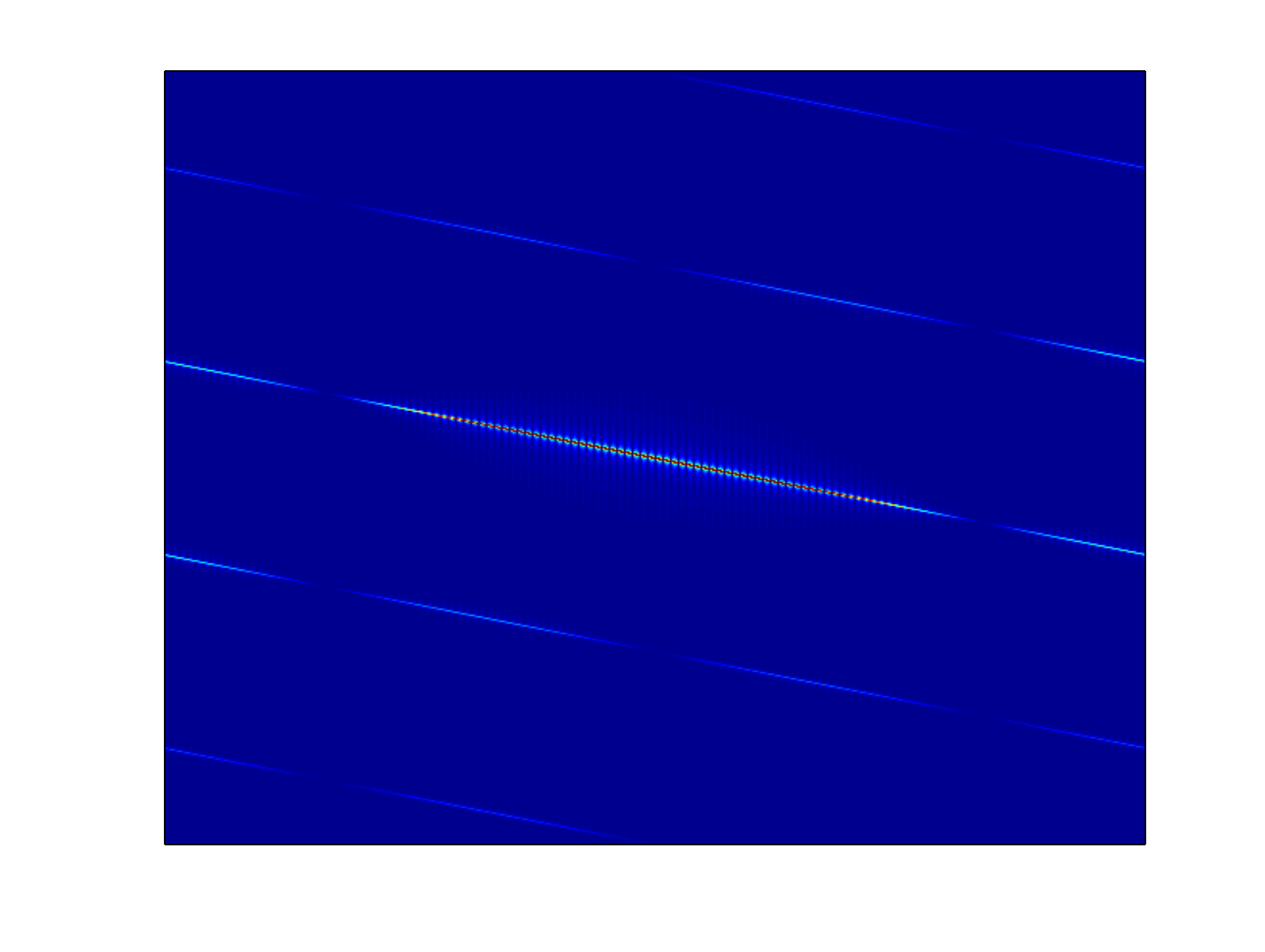}
\includegraphics[height=1.0in]{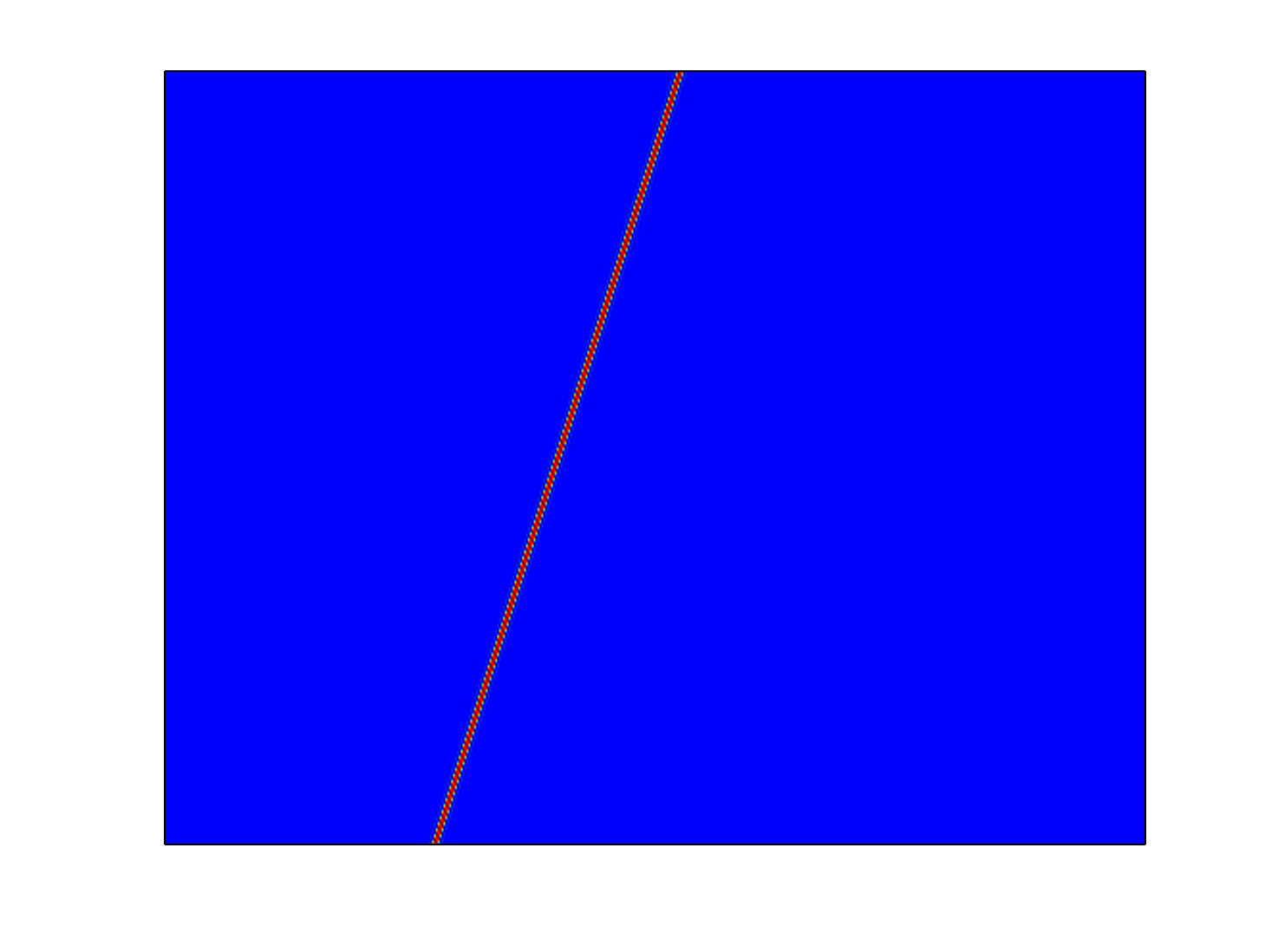}
\includegraphics[height=1.0in]{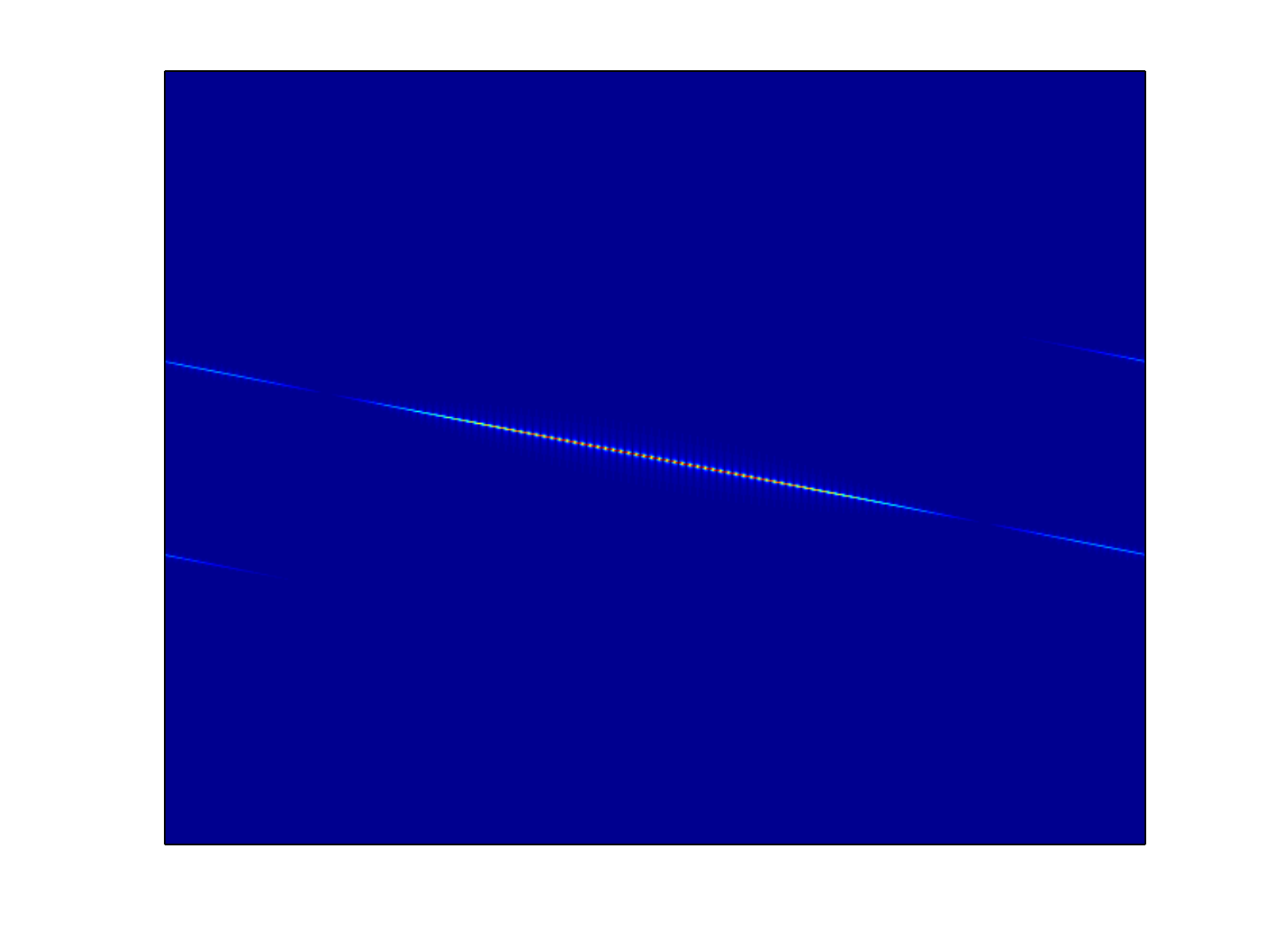}
\put(-250,0){(a)}
\put(-150,0){(b)}
\put(-50,0){(c)}
\end{center}
\caption{(a) Aliased image: DFT of ${f_c}(S_{-1/4}(n))$. (b) De-aliased image: $S^d_{-1/4}(f_d)(n)$. (c) De-aliased image: DFT of $S^d_{-1/4}(f_d)(n)$. }
\label{fig:aliase}
\end{figure}
Thus application of the digital shear operator $S^d_{2^{-j/2}k}$ allows a faithful digitization of the shearing operator associated with
the shear matrix $S_{2^{-j/2}k}$.

\subsubsection{Redundancy}
\label{subsec:redundancy}

One of the main issues which practical applicability requires is controllable redundancy.  To quantify the redundancy of the discrete
shearlet transform, we assume that the input data $f$ is a finite linear combination of translates of a 2D scaling function $\phi$ at
scale $J$ as follows:
\[
f(x) = \sum_{n_1=0}^{2^J-1}\sum_{n_2=0}^{2^J-1} d_n \phi(2^J x - n)
\]
as it was already the hypothesis in \eqref{eq:code2}. The redundancy -- as we view it in our analysis -- is then given by the number of
shearlet elements necessary to represent $f$. Furthermore, to state the result in more generality, we allow an arbitrary sampling matrix
$M_c = \text{diag}(c_1,c_2)$ for translation, i.e., consider shearlet elements of the form
\[
\psi_{j,k,m}(\cdot) = 2^{\frac{3}{4}j}\psi(S_kA_{2^j}\cdot - M_cm).
\]
We then have the following result.
\begin{proposition}[\cite{Lim2010}]
The redundancy of the DSST is
\[
\Bigl( \frac{4}{3}\Bigr)\Bigl( \frac{1}{c_1c_2}\Bigr).
\]
\end{proposition}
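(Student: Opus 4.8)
The plan is to compute the \emph{redundancy} as the ratio of the total number of shearlet coefficients produced by the DSST to the number $N = 2^{J}\cdot 2^{J} = 4^{J}$ of input samples $d_n$ indexed by $0 \le n_1, n_2 \le 2^{J}-1$. The whole argument reduces to a careful bookkeeping of how many coefficients arise at each scale, shear, and cone, which I would read off directly from the algorithmic realization encoded in Theorem \ref{theo:Lim}.

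First I would fix one cone (say the horizontal one) and one scale $j \in \{0,\dots,J-1\}$ and count the coefficients $\langle f,\psi_{j,k,m}\rangle$ produced there. By Theorem \ref{theo:Lim} these are obtained by applying the anisotropic separable wavelet transform $W_{J-j,J-j/2}$ to the digitally sheared data $S^d_{2^{-j/2}k}(f_J)$. Since $W_{J-j,J-j/2}$ downsamples by $2^{J-j}$ along the first axis and by $2^{J-j/2}$ along the second, an input array of size $2^{J}\times 2^{J}$ produces an output of size $2^{j}\times 2^{j/2}$, i.e. $2^{3j/2}$ coefficients per shear in the case $c_1 = c_2 = 1$. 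For a general sampling matrix $M_c = \text{diag}(c_1,c_2)$ the translation lattice is coarsened and the count becomes $2^{3j/2}/(c_1 c_2)$; I would justify this by observing that the spatial sampling density of $\psi_{j,k,m} = 2^{\frac34 j}\psi(S_k A_{2^j}\cdot - M_c m)$ equals $|\det A_{2^j}|/|\det M_c| = 2^{3j/2}/(c_1 c_2)$ per unit area.

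Next I would count the admissible shears: at scale $j$ the shear parameter runs over $k \in \{-2^{j/2},\dots,2^{j/2}\}$ (using the standing assumption that $j/2$ is an integer), giving $2\cdot 2^{j/2}+1 \approx 2^{j/2+1}$ values. Multiplying the per-shear count by the number of shears and by the two cones gives, at scale $j$, a total of
\[
2\cdot 2^{j/2+1}\cdot \frac{2^{3j/2}}{c_1 c_2} = \frac{4}{c_1 c_2}\,4^{j}
\]
coefficients, up to lower-order boundary terms. Summing this geometric series over $j = 0,\dots,J-1$ then yields
\[
\frac{4}{c_1 c_2}\sum_{j=0}^{J-1} 4^{j} = \frac{4}{c_1 c_2}\cdot\frac{4^{J}-1}{3},
\]
and dividing by $N = 4^{J}$ produces $\frac{4}{3c_1 c_2}\cdot\frac{4^{J}-1}{4^{J}}$, which equals $\bigl(\frac43\bigr)\bigl(\frac{1}{c_1 c_2}\bigr)$ in the limit (and is the leading term for finite $J$).

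The step I expect to be the main obstacle is not the summation, which is an elementary geometric series whose factor $\frac13$ is precisely the source of the $\frac43$, but rather the dimension bookkeeping in the two preceding steps: verifying that the cascade of upsampling, shearing, and anisotropic downsampling in Steps 1--5 of Subsection \ref{subsubsec:algorithm} really does leave exactly $2^{3j/2}/(c_1 c_2)$ coefficients per shear, and confirming that the contributions of the low-pass (scaling-function) channel and of the overlap of the two cones along their seam are of lower order and hence do not perturb the leading constant.
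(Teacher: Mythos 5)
Your proposal is correct and follows essentially the same counting argument as the paper: per cone and scale $j$ it counts $\approx 2^{j/2+1}$ shears times $2^{3j/2}/(c_1c_2)$ translates, doubles for the two cones, sums the geometric series $\sum_j 4^j$ to produce the factor $\tfrac13$, and divides by $4^J$ letting $J\to\infty$. The only cosmetic difference is that you justify the per-shear translate count via the downsampling structure of $W_{J-j,J-j/2}$ (and treat the low-pass contribution as lower order), while the paper simply states these counts, including the $\approx c_1^{-2}$ coarse-scale term explicitly.
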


\begin{proof}
For this, we first consider shearlet elements for the horizontal cone for a fixed scale $j \in \{0, \dots, J-1\}$. We observe that
there exist $2^{j/2+1}$ shearing indices $k$ and  $2^j \cdot 2^{j/2} \cdot (c_1c_2)^{-1}$ translation indices associated with the
scaling matrix $A_{2^j}$ and the sampling matrix $M_c$, respectively. Thus, $2^{2j+1}(c_1c_2)^{-1}$ shearlet elements from the horizontal
cone are required for representing $f$. Due to symmetry reasons, we require the same number of shearlet elements from the vertical cone.
Finally, about $c_1^{-2}$ translates of the scaling function $\phi$ are necessary at the coarsest scale $j = 0$.

Summarizing, the total number of necessary shearlet elements across all scales is about
\[
\Bigl(\frac{4}{c_1c_2}\Bigr)\Bigl(\sum_{j=0}^{J-1}2^{2j}+1\Bigr) = \Bigl( \frac{4}{c_1c_2} \Bigr)\Bigl( \frac{2^{2J}+2}{3} \Bigr)
\]
The redundancy of each shearlet frame can now be computed as the ratio of the number of coefficients $d_n$ and this number.
Letting  $J \rightarrow \infty$ proves the claim. \qed
\end{proof}

As an example, choose a translation grid with parameters $c_1 = 1$ and $c_2 = 0.4$. Then the associated DSST has asymptotic redundancy $10/3$.

\subsubsection{Computational Complexity}

A further essential characteristics is the computational complexity (see also Subsection \ref{subsec:speed}), which we now
formally compute for the discrete shearlet transform.

\begin{proposition}[\cite{KKL2010}]
\label{prop:running}
The computational complexity of the DSST is
\[
O(2^{\log_2(1/2(L/2-1))}L\cdot N).
\]
\end{proposition}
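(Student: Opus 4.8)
The plan is to tally the cost of the five algorithmic steps of Subsection~\ref{subsubsec:algorithm}, summed over all scales $j \in \{0,\dots,J-1\}$ and all admissible shears $k$, and to exploit the downsampling built into the transform to keep the running time linear in the number $N$ of input samples, the filter length supplying the remaining factor.

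First I would express the relevant filter lengths through the construction parameters. The lowpass filter $h$ (equivalently $m_0$) that determines $\phi_1$ and $\psi_1$ has length controlled by $K$ and $L$; since $L$ is the quantity surviving in the claimed bound, I would record this base length as a function of $L$ for fixed $K$. The cascade filters $h_{j/2}$ and $g_{J-j}$ appearing in Steps 2, 4 and 5 are products of dilated copies of $h$ and $g$ (cf.\ \eqref{eq:poly1} and Proposition~\ref{prop:cascade}); dilation by $2^k$ spreads their support, but each such convolution is paired with a downsampling, so I would bound the net cost of each convolve-then-downsample stage by $O(\ell \cdot (\text{data size at that stage}))$ with $\ell$ proportional to the base length.

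Next I would assemble the per-scale cost. For fixed $j$, Steps 1--4 realize the digital shear operator $S^d_{2^{-j/2}k}$ by 1D upsampling by $2^{j/2}$, convolution with $h_{j/2}$, resampling by $S_k$, convolution with $\overline{h}_{j/2}$, and downsampling by $2^{j/2}$; since the initial upsampling is exactly cancelled by the final downsampling, the net work per shear is proportional to the base filter length times the data size at scale $j$. Step 5 applies the separable anisotropic wavelet transform $W_{J-j,\,J-j/2}$ of \eqref{eq:wavelet}, whose cost is again filter length times data size by the standard pyramidal argument. Summing over the $2^{j/2+1}$ shears present at scale $j$ (as in the redundancy count of Subsection~\ref{subsec:redundancy}) and then over $j$, the total processed data volume is $N$ times the bounded redundancy, so the double sum collapses to $O(N)$ up to the filter-length factor, giving the stated $O\!\left(2^{\log_2(\tfrac{1}{2}(L/2-1))}\, L \cdot N\right)$.

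The main obstacle I expect is precisely this bookkeeping balance: the shear count $2^{j/2+1}$ and the support of the cascade filters both grow with $j$, and the intermediate upsampling temporarily inflates the data by $2^{j/2}$, while the matching downsampling and the anisotropic subsampling shrink it again. One must verify that these competing effects combine into a geometric series dominated by a single scale, so that no spurious factor of $J$ (equivalently $\log N$) survives; getting the prefactor to reduce exactly to $\tfrac{1}{2}(L/2-1)\cdot L$ is where the careful accounting of the cascade-filter lengths against the per-scale data sizes must be done.
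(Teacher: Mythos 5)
Your proposal rests on two premises that are both false, and together they prevent the argument from reaching the stated bound. First, the $L$ in the proposition is not the filter-construction parameter $L$ from \cite{KKL2010}: in the paper's proof, $L$ denotes the total number of shear directions at the finest scale $j=J$, so that $L = 2(2\cdot 2^{J/2}+1)$, and the prefactor $2^{\log_2(\frac12(L/2-1))}$ is nothing but $2^{J/2}$ rewritten through this relation. It is not a filter length, and no accounting of the lengths of $h$, $g$, or the cascade filters as functions of the frame parameters $(K,L)$ will produce it. Second, the claimed cancellation --- ``the initial upsampling is exactly cancelled by the final downsampling, so the net work per shear is proportional to the base filter length times the data size'' --- is precisely what the proposition denies. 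In the digital shear operator $S^d_{2^{-j/2}k}$, the convolution with $h_{j/2}$ produces a \emph{dense} array of size $2^{j/2}N$, the shear resampling $S_k$ acts on that refined grid, and the subsequent convolution with $\overline{h}_{j/2}$ (whose support is itself of order $2^{j/2}$ times the base filter length) is applied to this dense data before the final downsampling; hence each retained output sample still costs $O(2^{j/2})$ operations, and the per-direction cost at the finest scale is $O(2^{J/2}N)$, not $O(N)$.

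This inflation factor is exactly where the paper's (very short) proof gets its answer: Steps 1--4 at the finest scale $j=J$ cost $O(2^{J/2} L\cdot N)$, Step 5 is a plain separable wavelet transform costing $O(N)$ and hence negligible, and substituting $2^{J/2} = \tfrac12(L/2-1)$ yields the claim. Your accounting, if carried through, would instead give a bound of the form $O(\ell\, L\, N)$ with $\ell$ a fixed filter length, i.e.\ linear in $L$ --- contradicting both the stated bound and the paper's explicit remark that the total cost grows approximately like $L^2$ as the number of directions increases. The concluding sentence of your proposal, which defers ``getting the prefactor to reduce exactly to $\tfrac12(L/2-1)\cdot L$'' to careful bookkeeping, is therefore not a gap that more care can close: the two premises above must be replaced, not refined.
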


\begin{proof}
Analyzing Steps 1 -- 5 from Subsection \ref{subsubsec:algorithm}, we observe that the most time consuming step is the computation
of the scaling coefficients in Steps 1 -- 4 for the finest scale $j = J$. This step requires 1D upsampling by a factor of $2^{j/2}$
followed by 1D convolution for each direction associated with the shear parameter $k$. Letting $L$ denote the total number of
directions at the finest scale $j = J$, and $N$ the size of 2D input data, the computational complexity for computing the scaling
coefficients in Steps 1 -- 4 is $O(2^{j/2}L\cdot N)$.
The complexity of the discrete separable wavelet transform associated with $A_{2^j}$ for Step 5 requires $O(N)$ operations,
wherefore it is negligible. The claim follows from the fact that $L = 2(2\cdot2^{j/2}+1)$. \qed
\end{proof}

It should be noted that the total computational cost depends on the number $L$ of shear parameters at the finest scale $j = J$, and
this total cost grows approximately by a factor of $L^2$ as $L$ is increased. It should though be emphasized that $L$ can be chosen
in such a way that this shearlet transform is favorably comparable to other redundant directional transforms with respect to running time as
well as performance. A reasonable number of directions at the finest scale is $6$, in which case the constant factor $2^{\log_2(1/2(L/2-1))}$
in Proposition \ref{prop:running} equals $1$. Hence in this case the running time of this shearlet transform is only about 6 times slower
than the discrete orthogonal wavelet transform, thereby remains in the range of the running time of other directional transforms.

\subsubsection{Inverse DSST}

In Subsection \ref{subsubsec:tight}, we already discussed that this transform is not an isometry, wherefore the adjoint cannot be used as an inverse transform.
However, the `good' ratio of the frame bounds in the sense as detailed in Subsection \ref{subsubsec:tight} leads to a fast convergence rate of iterative methods
such as the conjugate gradient method. Let us mention that using the conjugate gradient method basically requires computing the
forward DSST and its adjoint, and we refer to \cite{M099} and also Subsection \ref{subsubsec:adjoint} for more details.

\subsection{Digital Non-Separable Shearlet Transform (DNST)}
\label{subsec:DNST}

In this section, we describe an alternative approach to derive a faithful digitalization of a discrete shearlet transform associated with
compactly supported shearlets. This algorithmic realization, which was developed in \cite{Lim2011}, resolves the following drawbacks of the DSST:
\begin{itemize}
\item Since this transform is not based on a tight frame, an additional computational effort is necessary to approximate the
inverse of the shearlet transform by iterative methods.
\item Computing the interpolated sampling values in \eqref{eq:dshear} requires additional computational costs.
\item This shearlet transform is not shift-variant, even when downsampling associated with $A_{2^j}$ is omitted.
\end{itemize}
We emphasize that although this alternative approach resolves these problems, the algorithm DSST provides a much
more faithful digitalization in the sense that the shearlet coefficients can be exactly computed in this framework.

The main difference between DSST and DNST will be to exploit {\em non-separable} shearlet generators, which give more flexibility.

\subsubsection{Shearlet Generators}

We start by introducing the {\em non-separable} shearlet generators utilized in DNST.
First,  for each scale parameter $j \ge 0$, define the shearlet generator $\psi^{\text{non}}_j$ by
\[
\hat{\psi}^{\text{non}}_j (\xi) = P_{J-j/2}(\xi)\hat \psi(\xi),
\]
where $P_{\ell}(\xi) = P(2^{\ell+1}\xi_1,\xi_2)$ for $\ell \ge 0$ and the trigonometric polynomial $P$ is
a 2D fan filter (c.f. \cite{DV05}). For an illustration of $P$ we refer to Fig. \ref{fig:nonsupp}(a).
This in turn defines shearlets $\psi^{\text{non}}_{j,k,m}$ generated by non-separable generator functions $\psi_j^{\text{non}}$
for each scale index $j \ge 0$ by setting
$$
\psi^{\text{non}}_{j,k,m}(x) = 2^{\frac{3}{4}j}\psi_j^{\text{non}}(S_kA_{2^j}x-M_{c_j}m),
$$
where $M_{c_j}$ is a sampling matrix given by $M_{c_j} = \text{diag}(c_1^j,c_2^j)$ and
$c^j_1$ and $c^j_2$ are sampling constants for translation.

One major advantage of these shearlets $\psi^{\text{non}}_{j,k,m}$ is the fact that a fan filter enables refinement of the
directional selectivity in frequency domain at each scale. Fig. \ref{fig:nonsupp}(a)-(b) show the refined essential support of
$\hat{\psi}^{\text{non}}_{j,k,m}$ as compared to shearlets $\psi_{j,k,m}$ arising from a separable generator as in
Subsection \ref{subsubsec:digitizationOfCSST}.
\begin{figure}[h]
\begin{center}
\includegraphics[width=1.5in]{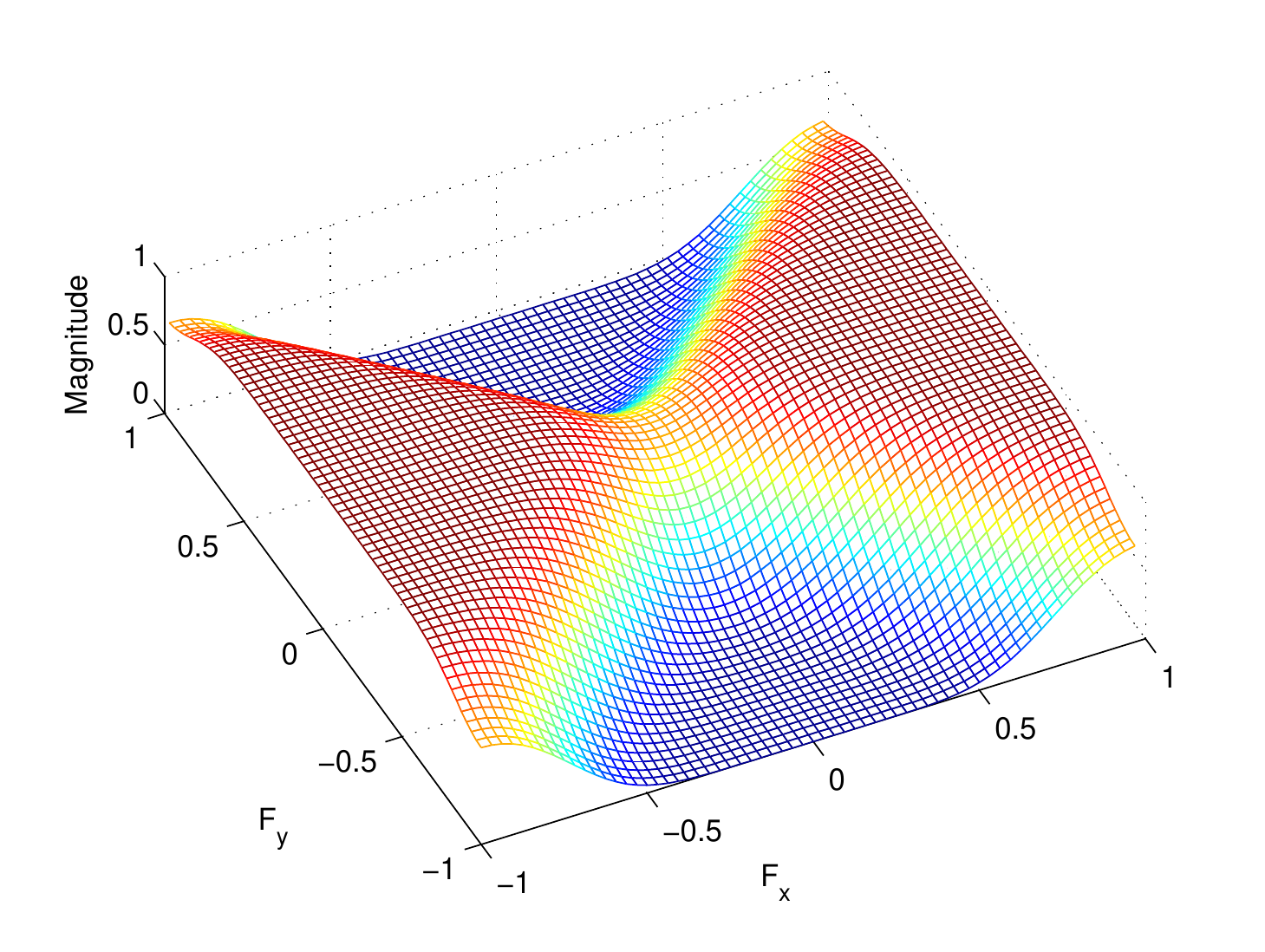}
\includegraphics[width=1.5in]{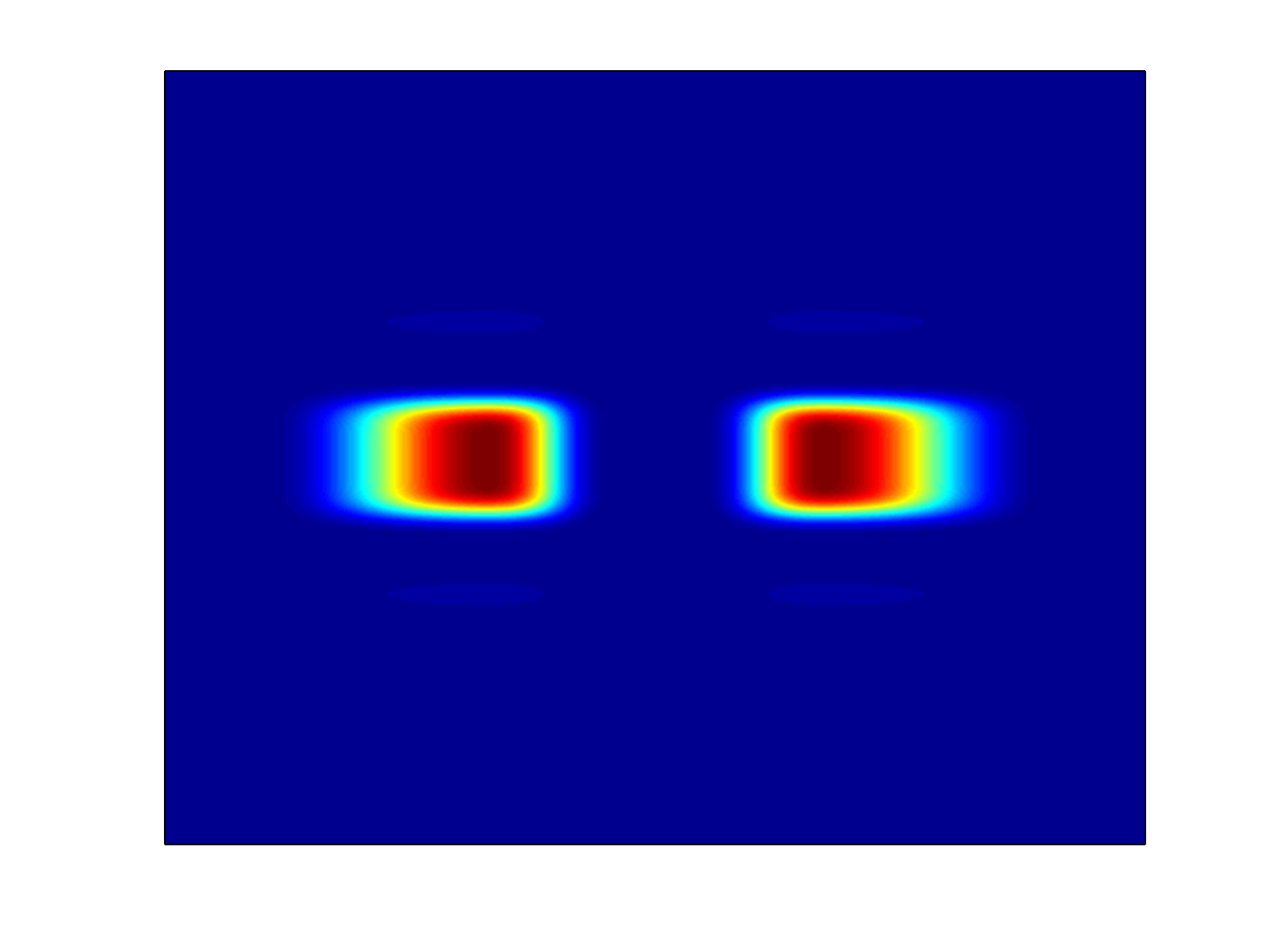}
\includegraphics[width=1.5in]{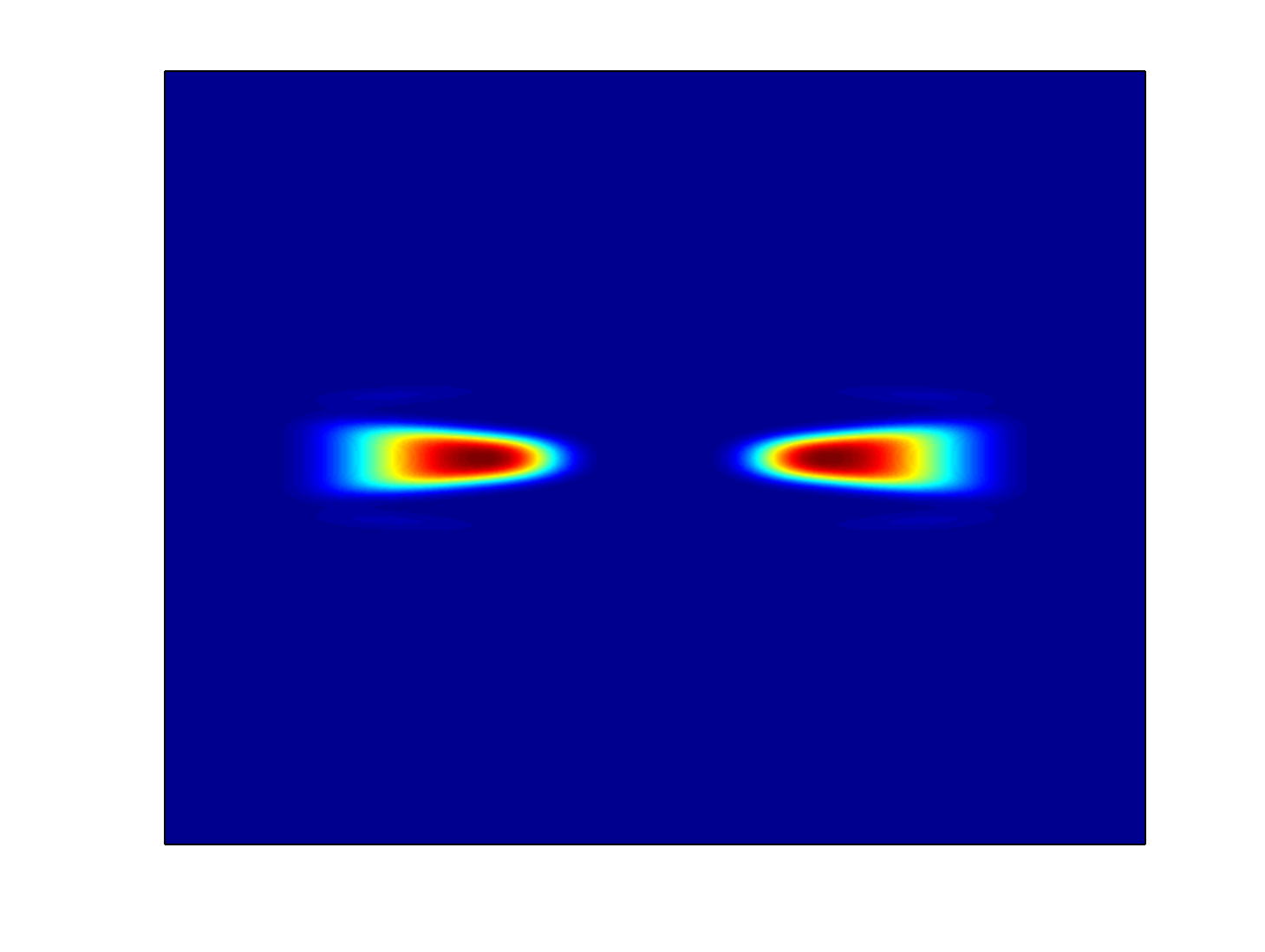}
\put(-280,0){(a)} \put(-170,0){(b)} \put(-50,0){(c)}
\end{center}
\caption{(a) Magnitude response of 2D fan filter. (b)Non-separable shearlet $\psi^{\text{non}}_{j,k,m}.$ (c)Separable shearlet $\psi_{j,k,m}$.}
\label{fig:nonsupp}
\end{figure}

\subsubsection{Algorithmic Realization}

Next, our aim is to derive a digital formulation of the shearlet coefficients $\langle f,\psi^{\text{non}}_{j,k,m}\rangle$
for a function $f$ as given in \eqref{eq:code2}. We will only discuss the case of shearlet coefficients associated with $A_{2^j}$
and $S_k$; the same procedure can be applied for $\tilde{A}_{2^j}$ and $\tilde{S}_k$ except for switching the order of variables
$x_1$ and $x_2$.

In Subsection \ref{subsec:direction}, we discretized a sheared function $f(S_{2^{-j/2}k}\cdot)$ using the digital shear operator
$S^d_{2^{-j/2}k}$ as defined in \eqref{eq:dshear}. In this implementation, we walk a different path. We digitize the shearlets
$\psi^{\text{non}}_{j,k,m}(\cdot) = \psi^{\text{non}}_{j,0,m}(S_{2^{-j/2}k}\cdot)$ by combining multiresolution analysis and
digital shear operator $S^d_{2^{-j/2}k}$ to digitize the wavelet $\psi^{\text{non}}_{j,0,m}$ and the shear operator $S_{2^{-j/2}k}$,
respectively. This yields digitized shearlet filters of the form
\[
\psi^{\text{d}}_{j,k}(n) = S^d_{2^{-j/2}k}\Bigl(p_{J-j/2}*w_{j}\Bigr)(n),
\]
where $w_j$ is the 2D separable wavelet filter defined by $w_j(n_1,n_2) = g_{J-j}(n_1)\cdot$ $h_{J-j/2}(n_2)$ and $p_{J-j/2}(n)$ are
the Fourier coefficients of the 2D fan filter $P_{J-j/2}$. The DNST associated with the non-separable shearlet generators $\psi^{\text{non}}_j$
is then given by
\[
DNST_{j,k}(f_J)(n) = (f_J*\overline{\psi}^{\text{d}}_{j,k})(2^{J-j}c^j_1n_1,2^{J-j/2}c^j_2n_2), \quad \text{for}\,\, f_J \in \ell^2(\Z^2).
\]
We remark that the discrete shearlet filters $\psi^{\text{d}}_{j,k}$ are computed by using a similar ideas as in Subsection
\ref{subsubsec:digitizationOfCSST}. As before, those filter coefficients can be precomputed to avoid additional computational effort.

Further notice that by setting $c^j_1 = 2^{j-J}$ and $c^j_2 = 2^{j/2-J}$, the DNST simply becomes a 2D convolution. Thus,
in this case, DNST is shear invariant.

\subsubsection{Inverse DNST}

In case that $c^j_1 = 2^{j-J}$ and $c^j_2 = 2^{j/2-J}$, the dual shearlet filters $\tilde{\psi}^{\text{d}}_{j,k}$ can be easily computed
by deconvolution, and we obtain the reconstruction formula
\[
f_J = \sum_{j,k} (f_J*\overline{\psi}^{\text{d}}_{j,k})*\tilde{\psi}^{\text{d}}_{j,k}.
\]
Thus, no iterative methods are required for the inverse DNST.

The frequency response of a discrete shearlet filter $\psi^{\text{d}}_{j,k}$ and its dual $\tilde{\psi}^{\text{d}}_{j,k}$ is illustrated
in Fig. \ref{fig:dual}. We observe that primal and dual shearlet filters behave similarly in the sense that both of filters are very well
localized in frequency.
\begin{figure}[h]
\begin{center}
\includegraphics[width=1.5in]{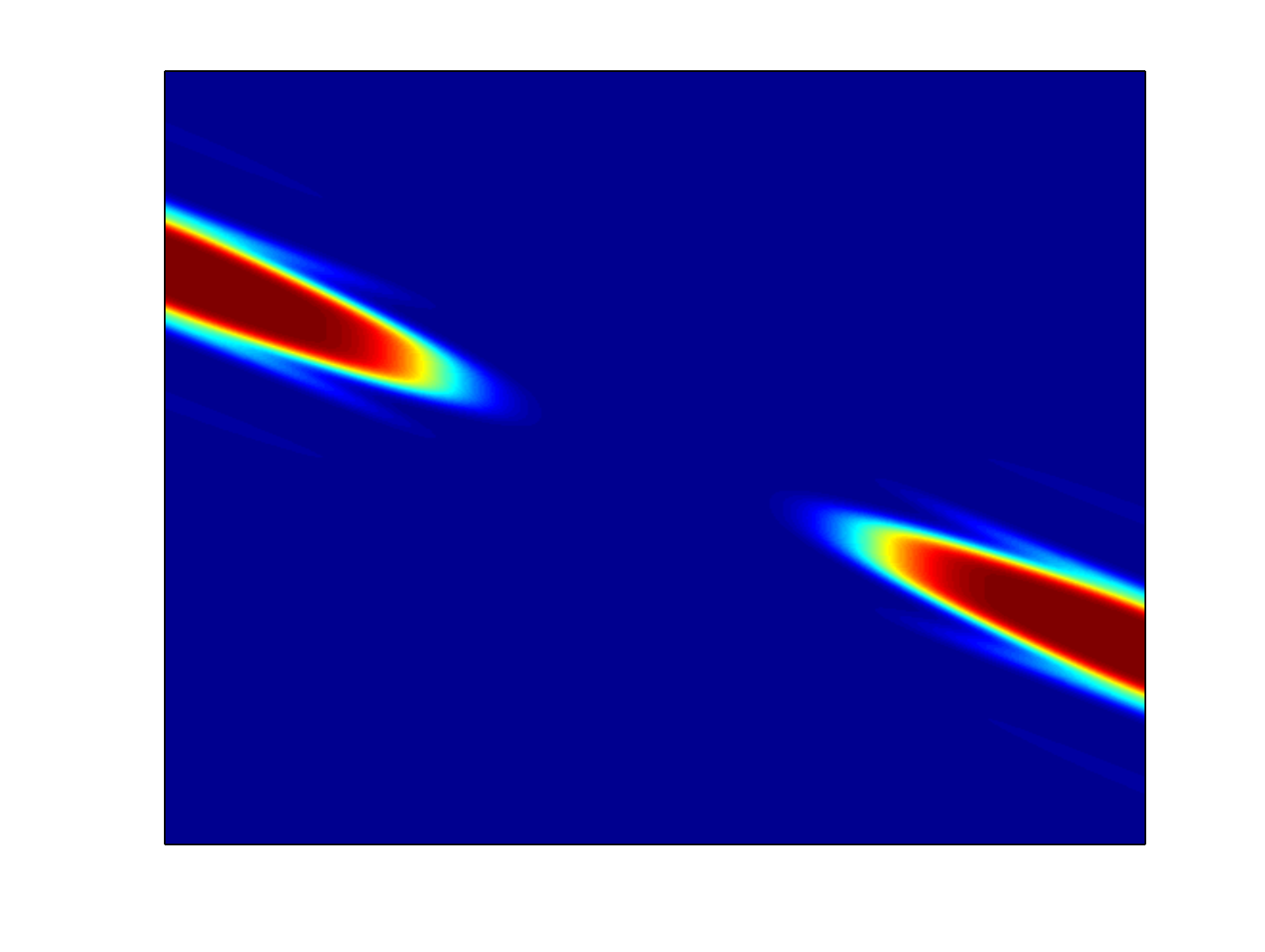}
\includegraphics[width=1.5in]{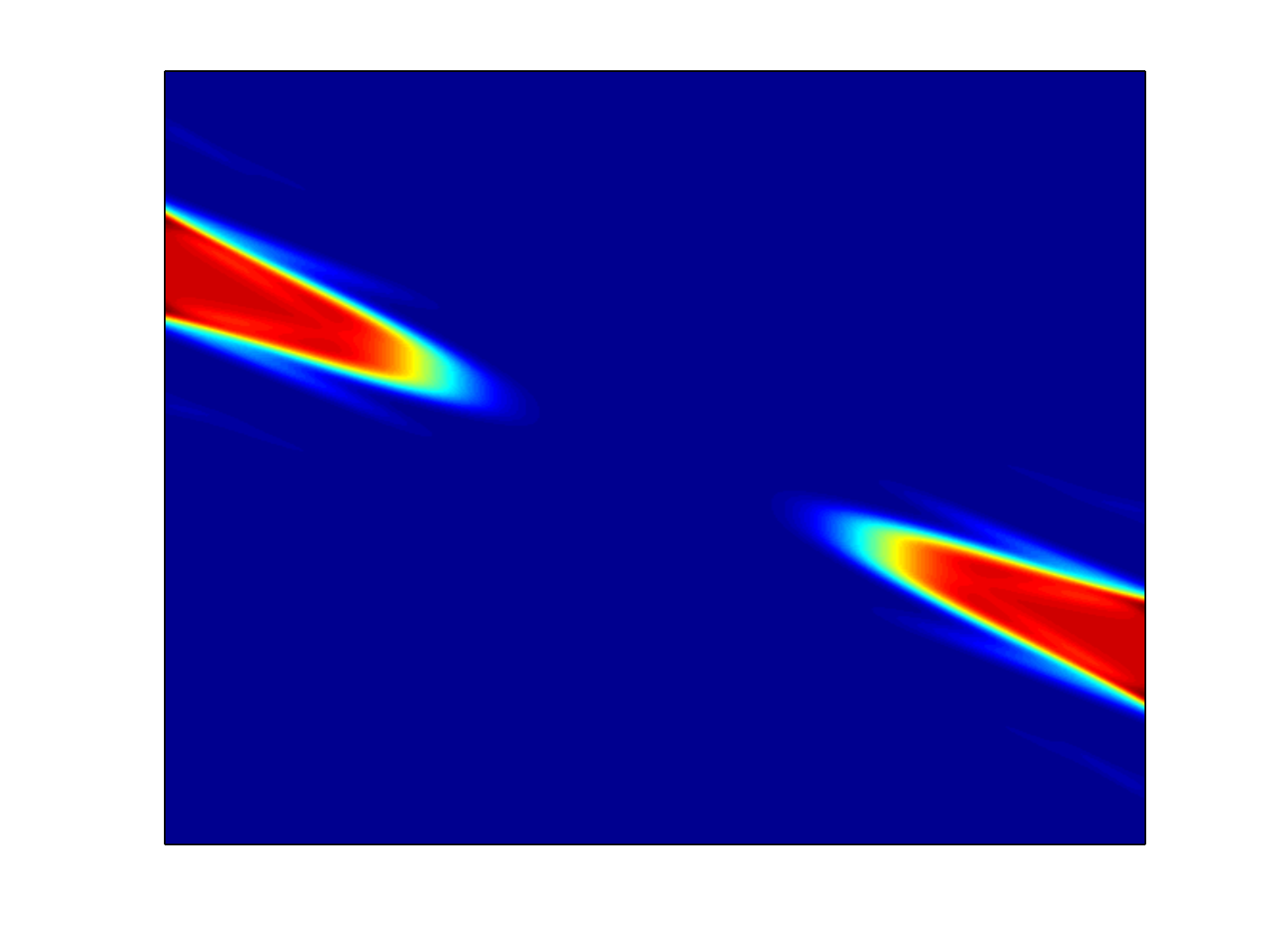}
\end{center}
\caption{Magnitude response of shearlet filter $\psi^{\text{d}}_{j,k}$ and its dual filter $\tilde{\psi}^{\text{d}}_{j,k}$.}
\label{fig:dual}
\end{figure}

\section{Framework for Quantifying Performance}
\label{sec:framework}

We next present the framework for quantifying performance of implementations of directional transforms, which
was originally introduced in \cite{KSZ11,DKSZ11}. This set of test measures was designed to analyze particular well-understood
properties of a given algorithm, which in this case are the desiderata proposed at the beginning of this chapter. This framework
was moreover introduced to serve as a tool for tuning the parameters of an algorithm in a rational way and as an
objective common ground for comparison of different algorithms. The performance of the three algorithms FDST, DSST, and DNST
will then be tested with respect to those measures. This will give us insight into their behavior with respect to the analyzed
characteristics, and also allow a comparison. However, the test values of these three algorithms will also show the delicateness
of designing such a testing framework in a fair manner, since due to the complexity of algorithmic realizations it is highly
difficile to do each aspect of an algorithm justice. It should though be emphasized that -- apart from being able to
rationally tune parameters -- such a framework of quantifying performance is essential for an objective judgement of
algorithms.
The codes of all measures are available in \url{ShearLab}.

In the following, $S$ shall denote the transform under consideration, $S^\star$ its adjoint, and, if iterative reconstruction is tested, $G_{A}J$
shall stand for the solution of the matrix problem $AI=J$ using the conjugate gradient method with residual error set to be
$10^{-6}$. Some measures apply specifically to transforms utilizing the pseudo-polar grid, for which purpose we introduce the
notation $P$ for the pseudo-polar Fourier transform, $w$ shall denote the weighting applied to the values on the pseudo-polar grid,
and $W$ shall be the windowing with additional 2D iFFT.

\subsection{Algebraic Exactness}

We require the transform to be the precise implementation of a theory for digital data on a pseudo-polar grid. In addition,
to ensure numerical accuracy, we provide the following measure. This measure is designed for transforms utilizing the pseudo-polar
grid.

\begin{measure}
\begin{svgraybox}
Generate a sequence of $5$ (of course, one can choose any reasonable integer other than 5) random images $I_1,\ldots,I_{5}$ on a pseudo-polar
grid for $N=512$ and $R=8$ with standard normally distributed entries. Our quality measure will then
be the Monte Carlo estimate for the operator norm $\|W^\star W -  {\Id} \|_{op}$  given by
\[
M_{alg} = \max_{i=1,\ldots,5} \frac{\|W^\star W I_i - I_i \|_{2}}{\|I_i\|_2}.
\]
\end{svgraybox}
\end{measure}

This measure applies to the FDST -- not to the DSST or DNST -- , for which we obtain
\[
M_{alg} = 6.6E-16.
\]
This confirms that the windowing in the FDST is indeed up to machine precision a Parseval frame, which was
already theoretically confirmed by Theorem \ref{thm:DSHtight}.

\subsection{Isometry of Pseudo-Polar Transform}
\label{subsec:isometry}

We next test the pseudo-polar transform itself which might be used in the algorithm under consideration.
For this, we will provide three different measures, each being designed to test a different aspect.

\begin{measure}
\begin{svgraybox}
\bitem
\item {\rm Closeness to isometry}. Generate a sequence of $5$ random images $I_1,\ldots,I_{5}$
of size $512 \times 512$ with standard uniformly distributed entries. Our quality measure will then
be the Monte Carlo estimate for the operator norm $\|P^\star w P -  {\Id} \|_{op}$  given by
\[
M_{isom_1} = \max_{i=1,\ldots,5} \frac{\|P^\star w P I_i - I_i \|_{2}}{\|I_i\|_2}.
\]
\item {\rm Quality of preconditioning}. Our quality measure will be
the spread of the eigenvalues of the Gram operator $P^\star w P$
given by
\[
M_{isom_2} = \frac{\lambda_{\max}(P^\star w P)}{\lambda_{\min}(P^\star w P)}.
\]
\item {\rm Invertibility}.  Our quality measure will be the Monte Carlo estimate for the
invertibility of the operator $\sqrt{w} P$ using conjugate gradient method $G_{\sqrt{w}P}$ (residual error
is set to be $10^{-6}$, here $G_{A}J$ means solving matrix problem $AI=J$ using conjugate gradient method) given by
\[
M_{isom_3} = \max_{i=1,\ldots,5} \frac{\|G_{\sqrt{w}P} \sqrt{w} P I_i - I_i \|_{2}}{\|I_i\|_2}.
\]
\eitem
\end{svgraybox}
\end{measure}

This measure applies to the FDST -- not to the DSST or DNST -- , for which we obtain the following numerical results,
see Table \ref{tab:1}.

\begin{table}[ht]
\caption{The numerical results for the test on isometry of the pseudo-polar transform.}
\label{tab:1}
\begin{tabular}{p{2.7cm}p{2.7cm}p{2.7cm}p{3.1cm}}
\hline\noalign{\smallskip}
& $M_{isom_1}$  &  $M_{isom_2}$  & $M_{isom_3}$\\
\noalign{\smallskip}\svhline\noalign{\smallskip}
FDST & 9.3E-4  &  1.834  & 3.3E-7 \\
\noalign{\smallskip}\hline\noalign{\smallskip}
\end{tabular}
\end{table}

The slight isometry deficiency of $M_{isom_1}\approx$ 9.9E-4 mainly results from the isometry deficiency of the weighting.
However, for practical purposes this transform can be still considered to be an isometry allowing the utilization of the
adjoint as inverse transform.

\subsection{Parseval Frame Property}

We now test the overall frame behavior of the system defined by the transform. These measures now apply to more
than pseudo-polar based transforms, in particular, to FDST, DSST, and DNST.

\begin{measure}
\begin{svgraybox}
Generate a sequence of $5$ random images $I_1,\ldots,I_{5}$
of size $512 \times 512$ with standard uniformly distributed entries. Our quality measure will then
two-fold:
\bitem
\item {\rm Adjoint transform}. The measure will be the Monte Carlo estimate for the operator norm $\|S^\star S -  {\Id} \|_{op}$  given by
\[
M_{tight_1} = \max_{i=1,\ldots,5} \frac{\|S^\star S I_i - I_i \|_{2}}{\|I_i\|_2}.
\]
\item {\rm Iterative reconstruction}. Using conjugate gradient $G_{\sqrt{w}P}$, our measure will be given by
\[
M_{tight_2} = \max_{i=1,\ldots,5} \frac{\|G_{\sqrt{w}P} W^\star S I_i - I_i \|_{2}}{\|I_i\|_2}.
\]
\eitem
\end{svgraybox}
\end{measure}

The following table, Table \ref{tab:2}, presents the performance of FDST and DNST with respect to these
quantitative measures.

\begin{table}[ht]
\caption{The numerical results for the test on Parseval property.}
\label{tab:2}
\begin{tabular}{p{3.5cm}p{3.5cm}p{4cm}}
\hline\noalign{\smallskip}
&  $M_{tight_1}$  &  $M_{tight_2}$ \\
\noalign{\smallskip}\svhline\noalign{\smallskip}
FDST & 9.9E-4  &  3.8E-7\\
DSST & 1.9920  & 1.2E-7 \\
DNST & 0.1829  & 5.8E-16 (with dual filters)\\
\noalign{\smallskip}\hline\noalign{\smallskip}
\end{tabular}
\end{table}

The transform FDST is nearly tight as indicated by the measures $M_{\text{tight}_1} = 9.9$E-4, i.e., the chosen
weights force the PPFT to be sufficiently close to an isometry for most practical purposes. If a higher accurate
reconstruction is required, $M_{\text{tight}_2} = 3.8$E-7 indicates that this can be achieved by the CG method.
As expected, $M_{\text{tight}_1} = 1.9920$ shows that DSST is not tight. Nevertheless, the CG method provides
with $M_{\text{tight}_2} = 1.2$E-7 a highly accurate approximation of its inverse.
DNST is much closer to
being tight than DSST (see $M_{\text{tight}_1} = 0.1829$). We remark that this transform -- as discussed -- does
not require the CG method for reconstruction. The value $5.8$-16 was derived by using the dual shearlet filters,
which show superior behavior.

\subsection{Space-Frequency-Localization}

The next measure is designed to test the degree to which the analyzing elements, here phrased in terms of
shearlets but can be extended to other analyzing elements, are space-frequency localized.

\begin{measure}
\begin{svgraybox}
Let $I$ be a shearlet in a $512 \times 512$ image centered at the origin
$(257,257)$ with slope $0$ of scale $4$, i.e., $\sigma_{4,0,0}^{11}+\sigma_{4,0,0}^{12}$. Our quality measure will be four-fold:
\bitem
\item {\rm Decay in spatial domain}. We compute the decay rates $d_1,\ldots,d_{512}$
along lines parallel to the $y$-axis starting from the line $[257,\;:\;]$ and the decay rates
$d_{512}$, $\ldots$, $d_{1024}$ with $x$ and $y$ interchanged. By decay rate, for instance, for the
line $[257:512,1]$, we first compute the smallest monotone majorant $M(x,1)$, $x=257,\ldots,512$
-- note that we could also choose an average amplitude here or a different `envelope' --
for the curve $|I(x,1)|$, $x=257,\ldots,512$. Then the decay rate is defined to be the average slope
of the line, which is a least square fit to the curve $\log(M(x,1))$, $x=257,\ldots,512$.
Based on these decay rates, we choose our measure to be the average of the decay rates
\[
M_{decay_1} = \frac{1}{1024}\sum_{i=1,\ldots,1024} d_i.
\]
\item {\rm Decay in frequency domain}. Here we intend to check whether the Fourier transform of $I$
is compactly supported and also the decay. For this, let $\hat{I}$ be the 2D-FFT of $I$ and compute the
decay rates $d_i$, $i=1,\ldots,1024$ as before. Then we define the following two measures:
\bitem
\item {\rm Compactly supportedness}.
\[
M_{supp} = \frac{\max_{|u|,|v|\le 3}|\hat I(u,v)|}{\max_{u,v}|\hat I(u,v)|}.
\]
\item {\rm Decay rate}.
\[
M_{decay_2} = \frac{1}{1024}\sum_{i=1,\ldots,512} d_i.
\]
\eitem
\item {\rm Smoothness in spatial domain}. We will measure smoothness by the average of local H\"older regularity.
For each $(u_0,v_0)$, we compute $M(u,v)= |I(u,v)-I(u_0,v_0)|$, $0<\max\{|u-u_0|,|v-v_0|\}\le 4$. Then the
local H\"older regularity $\alpha_{u_0,v_0}$ is the least square fit to the curve $\log(|M(u,v)|)$.
Then our smoothness measure is given by
\[
M_{smooth_1} = \frac{1}{512^2}\sum_{u,v}\alpha_{u,v}.
\]
\item {\rm Smoothness in frequency domain}. We compute the smoothness now for $\hat{I}$, the 2D-FFT
of $I$ to obtain the new $\alpha_{u,v}$ and define our measure to be
\[
M_{smooth_2} = \frac{1}{512^2}\sum_{u,v}\alpha_{u,v}.
\]
\eitem
\end{svgraybox}
\end{measure}

Let us now analyze the space-frequency localization of the shearlets utilized in FDST, DSST and DNST by these
measures. The numerical results are presented in Table \ref{tab:3}.

\begin{table}[ht]
\caption{The numerical results for the test on space-frequency localization.}
\label{tab:3}
\begin{tabular}{p{1.8cm}p{1.8cm}p{1.8cm}p{1.8cm}p{1.8cm}p{2.0cm}}
\hline\noalign{\smallskip}
&  $M_{decay_1}$  & $M_{supp}$ &  $M_{decay_2}$ &  $M_{smooth_1}$ & $M_{smooth_2}$\\ \noalign{\smallskip}\svhline\noalign{\smallskip}
FDST & -1.920 & 5.5E-5 & -3.257 & 1.319 &  0.734 \\
DSST & $-\infty$ & 8.6E-3 & -1.195 & 0.012 & 0.954 \\
DNST & $-\infty$  & 2.0E-3 & -0.716 & 0.188 & 0.949\\
\noalign{\smallskip}\hline\noalign{\smallskip}
\end{tabular}
\end{table}

The shearlet elements associated with FDST are band-limited and those associated with DSST and DNST are compactly supported,
which is clearly indicated by the values derived for $M_{decay_1}$, $M_{supp}$, and $M_{decay_2}$. It would be expected that
$M_{decay_2} = - \infty$ for FDST due to the band-limitedness of the associated shearlets. The shearlet elements are however
defined by their Fourier transform on a pseudo-polar grid, whereas the measure $M_{decay_2}$ is taken after applying the 2D-FFT to the shearlets
resulting in data on a cartesian grid, in particular, yielding a non-precisely compactly supported function.

The test values for
$M_{smooth_1}$ and $M_{smooth_2}$ show that the associated shearlets are more smooth in spatial domain for FDST than
for DSST and DNST, with the reversed situation in frequency domain.

\subsection{True Shear Invariance}

Shearing naturally occurs in digital imaging, and it can -- in contrast to rotation -- be precisely realized in the digital domain.
Moreover, for the shearlet transform, shear invariance can be proven and the theory implies
\[
\ip{2^{3j/2} \psi(S_k^{-1} A_4^{j} \cdot -m)}{f(S_s \cdot)} = \ip{2^{3j/2} \psi(S_{k+2^{j} s}^{-1} A_4^{j} \cdot -m)}{f}.
\]
We therefore expect to see this or a to the specific directional transform adapted behavior. The degree to which this goal is
reached is tested by the following measure.

\begin{measure}
\begin{svgraybox}
Let $I$ be an $256 \times 256$ image with an edge through the origin $(129,129)$ of slope $0$.  Given $-1\le s\le 1$, generates an
image $I_s:=I(S_s\cdot)$ and let $S_j$ be the set of all possible scales $j$ such that $2^js\in\bZ$. Our quality measure will then
be the curve
\[
M_{shear,j} = \max_{-2^j<k,k+2^js<2^j} \frac{\|C_{j,k}(S I_s)  - C_{j,k+2^js}( S I)\|_2}{\|I\|_2}, \qquad \text{scale } j\in S_j,
\]
where $C_{j,k}$ is the shearlet coefficients at scale $j$ and shear $k$.
\end{svgraybox}
\end{measure}


We present our results in Table \ref{tab:4}.

\begin{table}[ht]
\caption{The numerical results for the test on shear invariance.}
\label{tab:4}
\begin{tabular}{p{2.2cm}p{2.2cm}p{2.2cm}p{2.2cm}p{2.2cm}}
\hline\noalign{\smallskip}
&  $M_{shear,1}$ & $M_{shear,2}$ &$M_{shear,3}$ &$M_{shear,4}$  \\
\noalign{\smallskip}\svhline\noalign{\smallskip}
FDST & 1.6E-5 & 1.8E-4 & 0.002  & 0.003 \\
\noalign{\smallskip}\hline\noalign{\smallskip}
\end{tabular}
\end{table}

This table shows that the FDST is indeed almost shear invariant. A closer inspection shows that $M_{shear,1}$ and $M_{shear,2}$ are
relatively small compared to the measurements with respect to finer scales $M_{shear,3}$ and $M_{shear,4}$. The reason for this is
the aliasing effect which shifts some energy to the high frequency part near the boundary away from the edge in the frequency domain.

We did not test DSST and DNST with respect to this measure, since these transforms show a different -- not included in this Measure 5
-- type of shear invariance behavior.

\subsection{Speed}
\label{subsec:speed}

Speed is one of the most fundamental properties of each algorithm to analyze. Here, we test the speed up to a size of $N=512$
which regard as sufficient to computing the complexity.

\begin{measure}
\begin{svgraybox}
Generate a sequence of $5$ random images $I_i$, $i=5,\ldots,9$ of size $2^i \times 2^i$
with standard normally distributed entries. Let $s_i$ be the speed of the shearlet transform $S$ applied to
$I_i$.
Our hypothesis is that the speed behaves like $s_i = c \cdot (2^{2i})^d$; $2^{2i}$ being the
size of the input. Let now $\tilde{d}_a$ be the average slope of the line,
which is a least square fit to the curve $i \mapsto \log(s_i)$.
Let also $f_i$ be the 2fft applied to $I_i$, $i=5,\ldots,9$. Our quality measure will then be three-fold:
\bitem
\item {\rm Complexity}.
\[
M_{speed_1} = \frac{\tilde{d}_a}{2\log 2}.
\]
\item {\rm The Constant}.
\[
M_{speed_2} = \frac15 \sum_{i=5}^{9} \frac{s_i}{(2^{2i})^{M_{speed,1}}}.
\]
\item {\rm Comparison with 2D-FFT}.
\[
M_{speed_3} = \frac15 \sum_{i=5}^{9} \frac{s_i}{f_i}.
\]
\eitem
\end{svgraybox}
\end{measure}

Table \ref{tab:6} presents the results of testing FDST, DSST and DNST with respect to these speed measures.

\begin{table}[ht]
\caption{The numerical results for the test on speed.}
\label{tab:6}
\begin{tabular}{p{3cm}p{3cm}p{3cm}p{2.2cm}}
\hline\noalign{\smallskip}
&  $M_{speed_1}$ & $M_{speed_2}$ & $M_{speed_3}$ \\
\noalign{\smallskip}\svhline\noalign{\smallskip}
FDST & 1.156 & 9.3E-6 & 280.560 \\
DSST & 0.821 & 4.5E-3 & 88.700 \\
DNST & 1.081 & 9.9E-8 & 40.519\\
\noalign{\smallskip}\hline\noalign{\smallskip}
\end{tabular}
\end{table}

To interpret these results correctly, we remark that the DNST was tested only with test images $I_i$ for $i=7,\dots,9$, since it
can not be implemented for small size images. Interestingly, the results also show that the 2D FFT based convolution makes DNST
comparable to DSST with respect to these speed measures, although it is much more redundant than DSST. Finally, the results
show that FDST is comparable with both DSST and DNST with respect to complexity measure $M_{\text{speed}_1}$. From this, it is
conceivable to assume that FDST is highly comparable with respect to speed for large scale computations. The larger
value $M_{speed_3} = 280.560$ appears due to the fact that the FDST employs fractional Fourier transforms
on an oversampled pseudo-polar grid of size.

\subsection{Geometric Exactness}

One major advantage of directional transforms is their sensitivity with respect to geometric features alongside with
their ability to sparsely approximate those (cf. Chapter \cite{SparseApproximation}). This measure is designed to
analyze this property.

\begin{measure}
\begin{svgraybox}
Let $I_1,\ldots, I_8$ be $256 \times 256$ images of an edge through the origin $(129,129)$ and of slope $[-1,-0.5,0,0.5,1]$
and the transpose of the middle three, and let $c_{i,j}$ be the associated shearlet coefficients
for image $I_i$ and scale $j$. Our quality measure will two-fold:
\bitem
\item {\rm Decay of significant coefficients}.
Consider the curve
\[
\frac18 \sum_{i=1}^8 \max{|c_{i,j} \text{(of analyzing elements aligned with the line)}|}, \qquad \text{scale } j,
\]
let $d$ be the average slope of the line, which is a least square fit to $\log$ of this curve, and define
\[
M_{geo_1} = d.
\]
\item {\rm Decay of insignificant coefficients}.
Consider the curve
\[
\frac18 \sum_{i=1}^8 \max{|c_{i,j} \text{(of all other analyzing elements)}|}, \qquad \text{scale } j,
\]
let $d$ be the average slope of the line, which is a least square fit to $\log$ of this curve, and define
\[
M_{geo_2} = d.
\]
\eitem
\end{svgraybox}
\end{measure}

Table \ref{tab:5} shows the numerical test results for FDST, DSST, and  DNST.

\begin{table}[ht]
\caption{The numerical results for the test on geometric exactness.}
\label{tab:5}
\begin{tabular}{p{3.5cm}p{3.5cm}p{4.0cm}}
\hline\noalign{\smallskip}
&  $M_{geo_1}$ & $M_{geo_2}$ \\
\noalign{\smallskip}\svhline\noalign{\smallskip}
FDST & -1.358 & -2.032 \\
DSST & -0.002 & -0.030\\
DNST & -0.019 & -0.342\\
\noalign{\smallskip}\hline\noalign{\smallskip}
\end{tabular}
\end{table}

As expected, the decay rate of the insignificant shearlet coefficients of FDST, i.e.,
the ones not aligned with the line singularity, measured by $M_{geo_2}\approx$ -2.032 is much larger than the decay rate of the significant
shearlet coefficients measured by $M_{geo_1}\approx$ -1.358. Notice that this difference is even more significant in the case of
the DSST and DNST.

\subsection{Robustness}

To analyze robustness of an algorithm, we choose thresholding as the most common impact on a sequence of transform coefficients.

\begin{measure}
\begin{svgraybox}
Let $I$ be the regular sampling of a Gaussian function with mean 0 and variance $256$
on $\{-128,127\}^2$ generating an $256 \times 256$-image.
\bitem
\item {\rm Thresholding 1}. Our first quality measure will be the curve
\[
M_{thres_{1,p_1}} = \frac{\|G_{\sqrt{w}P} W^\star \; {\rm thres}_{1,p_1} \, S I  - I\|_2}{\|I\|_2},
\]
where ${\rm thres}_{1,p_1}$ discards $100 \cdot (1-2^{-p_1})$ percent of the coefficients ($p_1 = [2:2:10]$).
\item {\rm Thresholding 2}. Our second quality measure will be the curve
\[
M_{thres_{2,p_2}} = \frac{\|G_{\sqrt{w}P} W^\star \; {\rm thres}_{2,p_2} \, S I  - I\|_2}{\|I\|_2},
\]
where ${\rm thres}_{2,p_2}$ sets all those coefficients to zero with absolute values below
the threshold $m(1-2^{-p_2})$ with $m$ being the maximal absolute value of all coefficients. ($p_2 = [0.001:0.01:0.041]$)
\eitem
\end{svgraybox}
\end{measure}

Table \ref{tab:7} shows that even if we discard $100(1-2^{-10}) \sim 99.9\%$ of the FDST coefficients, the original image
is still well approximated by the reconstructed image. Thus the number of the significant coefficients is relatively small
compared to the total number of shearlet coefficients. From Table \ref{tab:8}, we note that knowledge of the shearlet
coefficients with absolute value greater than $m(1-1/2^{0.001}) (\sim 0.1\%$ of coefficients) is sufficient for precise
reconstruction.

DNST shows a similar behavior with worse values for relatively large $p_1$. It should be however emphasized that firstly,
the redundancy of DNST used in this test is 25 and this is lower than the redundancy of FDST, which is about 71. This effect
can be more strongly seen by the test results of DSST whose redundancy with 4 even much smaller. Secondly, a significant
part of the low frequency coefficients in both DSST and DNST will be removed by a relatively large threshold, since the
ratio between the number of the low frequency coefficients and the total number of coefficients is much higher than FDST.
This prohibits a similarly good reconstruction of a Gaussian function.


This test in particular shows the delicateness of comparing different algorithms by merely looking at the test values without
a rational interpretation; in this case, without considering the redundancy and the ratio between the number of the low frequency
coefficients and the total number of coefficients.

\begin{table}[ht]
\caption{The numerical results for $M_{thres_{1,p_1}}$.}
\label{tab:7}
\begin{tabular}{p{1.8cm}p{1.8cm}p{1.8cm}p{1.8cm}p{1.8cm}p{2.0cm}}
\hline\noalign{\smallskip}
$p_1$ &  2 & 4 & 6 & 8 & 10 \\
\noalign{\smallskip}\svhline\noalign{\smallskip}
FDST &1.5E-08&7.2E-08&2.5E-05 & 0.001  & 0.007 \\
DSST & 0.02961 & 0.02961 & 0.02961 & 0.0296 & 0.0331\\
DNST & 5.2E-10 & 1.2E-04 & 0.00391 & 0.0124 & 0.0396\\
\noalign{\smallskip}\hline\noalign{\smallskip}
\end{tabular}
\end{table}

\begin{table}[ht]
\caption{The numerical results for $M_{thres_{2,p_2}}$.}
\label{tab:8}
\begin{tabular}{{p{1.8cm}p{1.8cm}p{1.8cm}p{1.8cm}p{1.8cm}p{2.0cm}}}
\hline\noalign{\smallskip}
$p_2$ &  0.001 & 0.011 & 0.021 & 0.031 & 0.041 \\ \noalign{\smallskip}\svhline\noalign{\smallskip}
FDST & 0.005 & 0.039 & 0.078 & 0.113  & 0.154\\
DSST & 0.030 & 0.036 & 0.046 & 0.056 & 0.072\\
DNST & 0.002 & 0.018 & 0.035 & 0.055 & 0.076\\
\noalign{\smallskip}\hline\noalign{\smallskip}
\end{tabular}
\end{table}

\begin{acknowledgement}
The first author would like to thank David Donoho and Morteza Shahram for many inspiring discussions on topics
in this area. She also acknowledges partial support by Deutsche Forschungsgemeinschaft (DFG) Grant SPP-1324 KU 1446/13
and DFG Grant KU 1446/14. The second author was supported by DFG Grant SPP-1324 KU 1446/13, and the third author
was supported by DFG Grant KU 1446/14.
\end{acknowledgement}


\end{document}